\newtheorem{theorem}{Theorem}[section]
\newtheorem{lemma}[theorem]{Lemma}
\newtheorem{remark}[theorem]{Remark}
\begin{document}

\title{An efficient spectral method for the fractional Schr\"{o}dinger equation on the real line}

\author{Mengxia Shen\footnotemark[1] ~ and ~ Haiyong Wang\footnotemark[1]~\footnotemark[2]}

\footnotetext[1]{School of Mathematics and Statistics, Huazhong University of Science and Technology, Wuhan 430074, P. R. China. \texttt{Email:haiyongwang@hust.edu.cn}}

\footnotetext[2]{Hubei Key Laboratory of Engineering Modeling and Scientific Computing, Huazhong University of Science and Technology, Wuhan 430074, P. R. China}

\maketitle

\begin{abstract}
The fractional Schr\"{o}dinger equation (FSE) on the real line arises in a broad range of physical settings and their numerical simulation is challenging due to the nonlocal nature and the power law decay of the solution at infinity. In this paper, we propose a new spectral discretization scheme for the FSE in space based upon Malmquist-Takenaka functions. We show that this new discretization scheme achieves much better performance than existing discretization schemes in the case where the underlying FSE involves the square root of the Laplacian, while in other cases it also exhibits comparable or even better performance. Numerical experiments are provided to illustrate the effectiveness of the proposed method.
\end{abstract}

\noindent {\bf Keywords:} Fractional Laplacian, fractional Schr\"{o}dinger equation, Malmquist-Takenaka functions, spectral Galerkin method

\vspace{0.05in}

\noindent {\bf AMS classifications:} 65M70, 41A20

\section{Introduction}
Nonlocal and fractional models have become increasingly important because of their connection with many real-world phenomena that appear in physics, biology and materials science. In contrast to local models, nonlocal and fractional models are more suitable to model complex systems exhibiting singularities and anomalies as well as involving nonlocal interactions (see, e.g., \cite{du2019}). The fractional Laplacian operator, which can be seen as the infinitesimal generator of a symmetric $\alpha$-stable L\'{e}vy process in probability theory, is one of the fundamental nonlocal operators and it arises in a number of applications such as anomalous diffusion, image denoising, finance. The fractional Laplacian operator of a function $f:\mathbb{R}^d\rightarrow\mathbb{R}$ is defined by
\begin{equation}\label{def:FLCPV}
(-\Delta)^{\alpha/2}f(x) := \alpha\frac{{2}^{\alpha-1}\Gamma(\frac{\alpha+d}{2})}{\pi^{d/2}\Gamma{(\frac{2-\alpha}{2}})} ~\mathrm{p.v.} \int_{\mathbb{R}^d}\frac{f(x) - f(y)}{|x-y|^{d+\alpha}} \mathrm{d}y, \quad  x\in{\mathbb{R}^d},
\end{equation}
where $\alpha\in{(0,2)}$ and $\mathrm{p.v.}$ stands for the Cauchy principle value. Equivalently, it can also be defined as a pseudo-differential operator via the Fourier transform
\begin{equation}\label{def:FLFT}
(-\Delta)^{\alpha/2} f(x) := \mathcal{F}^{-1}[|\xi|^{\alpha} \mathcal{F}[f](\xi)](x),
\end{equation}
where $\mathcal{F}$ and $\mathcal{F}^{-1}$ denote the Fourier and inverse Fourier transforms, respectively. It is well known that the fractional Laplacian operator reduces to the identity operator whenever $\alpha\rightarrow0$ and to the negative Laplacian operator whenever $\alpha\rightarrow2$. When discretizing nonlocal models involving the fractional Laplacian operators, the main difficulty stems from the nonlocal and singular nature of the fractional Laplacian operators and the slow decay of the underlying solution at infinity. In the past decade, numerical methods for such nonlocal models have attracted a lot of attention and many significant advances have been made, such as the finite element methods \cite{Acosta2017,Bonito2019}, the finite difference methods \cite{Duo2018,Huang2014,Minden2020} and spectral methods \cite{Acosta2018,Cayama2020AMC,Mao2017,Sheng2020MCF,Sheng2021,Tang2020}. 
Among these methods, the finite element and finite difference methods are typically studied for nonlocal models on bounded domains. While for nonlocal models on unbounded domains, spectral methods are particularly attractive due to their global character.

In this paper, we are interested in the fractional Schr\"{o}dinger equation (FSE) on the real line
\begin{align}\label{eq:Model}
\begin{cases}
\mathrm{i}\partial_{t} \psi(x,t) = \gamma(-\Delta)^{\alpha/2} \psi(x,t) + \mathcal{T}\psi(x,t), \quad  x\in\mathbb{R}, \quad t>0,  \\[5pt]
\psi(x,0) = \psi_0(x),  \quad  x\in\mathbb{R}, \\[3pt]
\lim_{|x|\rightarrow{\infty}}\psi(x,t) = 0,
\end{cases}
\end{align}
where $\mathrm{i}=\sqrt{-1}$, $\gamma\in\mathbb{R}$ and $\gamma\neq0$, $\mathcal{T}$ is a linear or nonlinear operator (e.g., $\mathcal{T}\psi(x,t)=V(x)\psi(x,t)$ or $\mathcal{T}\psi(x,t)=\pm|\psi(x,t)|^2\psi(x,t)$) and $\psi(x,t)$ is a complex-valued wave function. Equation \eqref{eq:Model}, which was introduced by Laskin in \cite{laskin2002}, is a natural generalization of the standard Schr\"{o}dinger equation that arises in the context of the well-known Feynman path integrals approach to quantum mechanics when the Brownian trajectories are replaced by L\'{e}vy flights. More recently, experimental realizations and rigorous derivations of FSE have been widely investigated in different branches of physics, such as the continuum limit of certain discrete physical systems with long-range interactions \cite{kirk2013}, beam propagation \cite{Huang2017,longhi2015} and the L\'{e}vy crystal in a condensed-matter environment \cite{stickler2013}. Around the same time, numerical methods that combine spectral methods for the spatial discretization and time-stepping methods for the temporal discretization for the solution of \eqref{eq:Model} have attracted considerable attention (see, e.g., \cite{Duo2016,Klein2014,Mao2017,Sheng2020MCF,Sheng2021}). However, when the solution decays slowly with a power law at infinity, exponential convergence of these existing spectral discretization schemes using either Fourier, Hermite or mapped Chebyshev functions in space has so far not been observed. Here, we propose a novel spectral discretization scheme using Malmquist-Takenaka functions which have excellent approximation properties for functions with poles in the complex plane. We show that this new discretization method achieves exponential convergence in the particular case of $\alpha=1$, regardless of the underlying FSE is linear or nonlinear, and exhibits a comparable or even better performance than state-of-the-art discretization schemes in other cases.

The rest of this paper is organized as follows. In section \ref{sect:MTfun}, we briefly review some properties of Malmquist-Takenaka functions that will be used for the construction of our spectral discretization method. In section \ref{sect:MTSM}, we present a numerical method for FSE by combining a spectral Galerkin method using Malmquist-Takenaka functions for the space discretization coupled with time-stepping schemes for the temporal discretization. We perform two numerical examples to illustrate the performance of the proposed method in section \ref{sect:Exam} and conclude the paper with some remarks in section \ref{sect:Con}.

\section{Malmquist-Takenaka functions}\label{sect:MTfun}
The Malmquist-Takenaka functions\footnote{Up to some constant and scaling factors, the Malmquist-Takenaka functions are also known as the Christov functions in some literature. Here we adopt the name used in \cite{Iserles2020FAA}.} (MTFs) are defined by
\begin{equation}\label{def:MT}
\varphi_n(x) = \mathrm{i}^n \sqrt{\frac{2}{\pi}} \frac{(1+2\mathrm{i}x)^n}{(1-2\mathrm{i}x)^{n+1}}, \quad n\in{\mathbb{Z}}.
\end{equation}
Let $L^2(\mathbb{R})$ denote the space of square integrable functions and let $(\cdot,\cdot)$ denote the inner product defined by $(f,g)=\int_{\mathbb{R}}f(x)\overline{g(x)}\mathrm{d}x$, where $\overline{f(x)}$ denotes the complex conjugate of $f(x)$. It is well known that the system $\{\varphi_n\}_{n\in\mathbb{Z}}$ forms a complete and orthonormal basis in $L^2(\mathbb{R})$, i.e.,
\begin{equation}\label{eq:MTOrth}
(\varphi_n,\varphi_m) = \delta_{n,m},
\end{equation}
where $\delta_{n,m}$ is the Kronecker delta. Theoretical aspects of MTFs as well as their applications in designing algorithms for Fourier and Hilbert transforms have been investigated during the past few decades (see, e.g., \cite{Christov1982,Higgins1977,Iserles2020FAA,Iserles2021,Weber1980,Weideman1995}). We list below some of theoretical properties of MTFs that will be used for the construction of spectral method.
\begin{itemize}
\item[(i)] They satisfy the following differential recurrence relation
\begin{equation}\label{eq:DiffI}
\frac{\mathrm{d}}{\mathrm{d}x}\varphi_n(x) = -n \varphi_{n-1}(x) + \mathrm{i}(2n+1)\varphi_n(x) + (n+1)\varphi_{n+1}(x),
\end{equation}
from which we can deduce that the differentiation matrix of the Malmquist-Takenaka system is skew-Hermitian and tridiagonal. Moreover, higher order derivatives of $\varphi_n(x)$ can be derived from repeated application of \eqref{eq:DiffI}. Moreover, they also satisfy
\begin{equation}\label{eq:DiffII}
x\frac{\mathrm{d}}{\mathrm{d}x}\varphi_n(x) = -\frac{n}{2} \mathrm{i}\varphi_{n-1}(x) - \frac{1}{2}\varphi_n(x)-\frac{n+1}{2}\mathrm{i}\varphi_{n+1}(x).
\end{equation}

\item[(ii)] They are uniformly bounded on the real line and
\begin{equation}\label{eq:UB}
|\varphi_n(x)| = \sqrt{\frac{2}{\pi}} \frac{1}{\sqrt{1+4x^2}} \leq \sqrt{\frac{2}{\pi}}, \quad x\in\mathbb{R}.
\end{equation}

\item[(iii)] For $N\in\mathbb{N}$, we define the space $\mathbb{V}_{N}(\mathbb{R}) = \mathrm{span}\{\varphi_{k}(x)\}_{k=-N}^{N-1}$ and denote by $\Pi_N:L^2(\mathbb{R})\rightarrow\mathbb{V}_{N}(\mathbb{R})$ the orthogonal projection operator, i.e.,
\begin{equation}\label{eq:MTExp}
(\Pi_N f)(x) = \sum_{k=-N}^{N-1} a_k \varphi_k(x), \quad  a_k = (f,\varphi_k).
\end{equation}
With the change of variable $x=\tan(\theta/2)/2$, we obtain
\begin{align}\label{eq:MTCoeff}
a_k &= \frac{(-\mathrm{i})^k}{2\sqrt{2\pi}} \int_{-\pi}^{\pi} f\left(\frac{1}{2}\tan\frac{\theta}{2}\right) \left(1-\mathrm{i}\tan\frac{\theta}{2}\right) e^{-\mathrm{i}k\theta} \mathrm{d}\theta \nonumber \\
&\approx \frac{\mathrm{(-i)}^k}{2N} \sqrt{\frac{\pi}{2}} \sum_{j=0}^{2N-1} f\left(\frac{1}{2}\tan\frac{\theta_j}{2}\right) \left(1-\mathrm{i}\tan\frac{\theta_j}{2}\right) e^{-\mathrm{i}k\theta_j},
\end{align}
where $\theta_j=-\pi+{\pi}j/N$ and we have used the fact that integrals of periodic functions can be computed efficiently by using the composite trapezoidal rule. Hence, the computation of $\{a_k\}_{k=-N}^{N-1}$ can be performed rapidly with a single FFT in $\mathcal{O}(N\log N)$ operations.

\item[(iv)] They are eigenfunctions of the Hilbert transform
\begin{equation}\label{eq:Hilb}
\mathcal{H}[\varphi_n](x) = (-\mathrm{i})\mathrm{sgn}(n)\varphi_n(x), \quad \mathcal{H}[f](x) = \frac{1}{\pi} \mathrm{p.v.} \int_{\mathbb{R}}\frac{f(z)}{x-z}\mathrm{d}z,
\end{equation}
where $\mathrm{sgn}(n)=1$ for $n=0,1,\ldots$ and $\mathrm{sgn}(n)=-1$ for $n=-1,-2,\ldots$. Moreover, the Hilbert transform is related to the square root of the Laplacian by $(-\Delta)^{1/2} f(x)=\mathcal{H}[f{'}](x)$.
\end{itemize}

\begin{figure}
\centering
\includegraphics[height=5.cm,width=7cm]{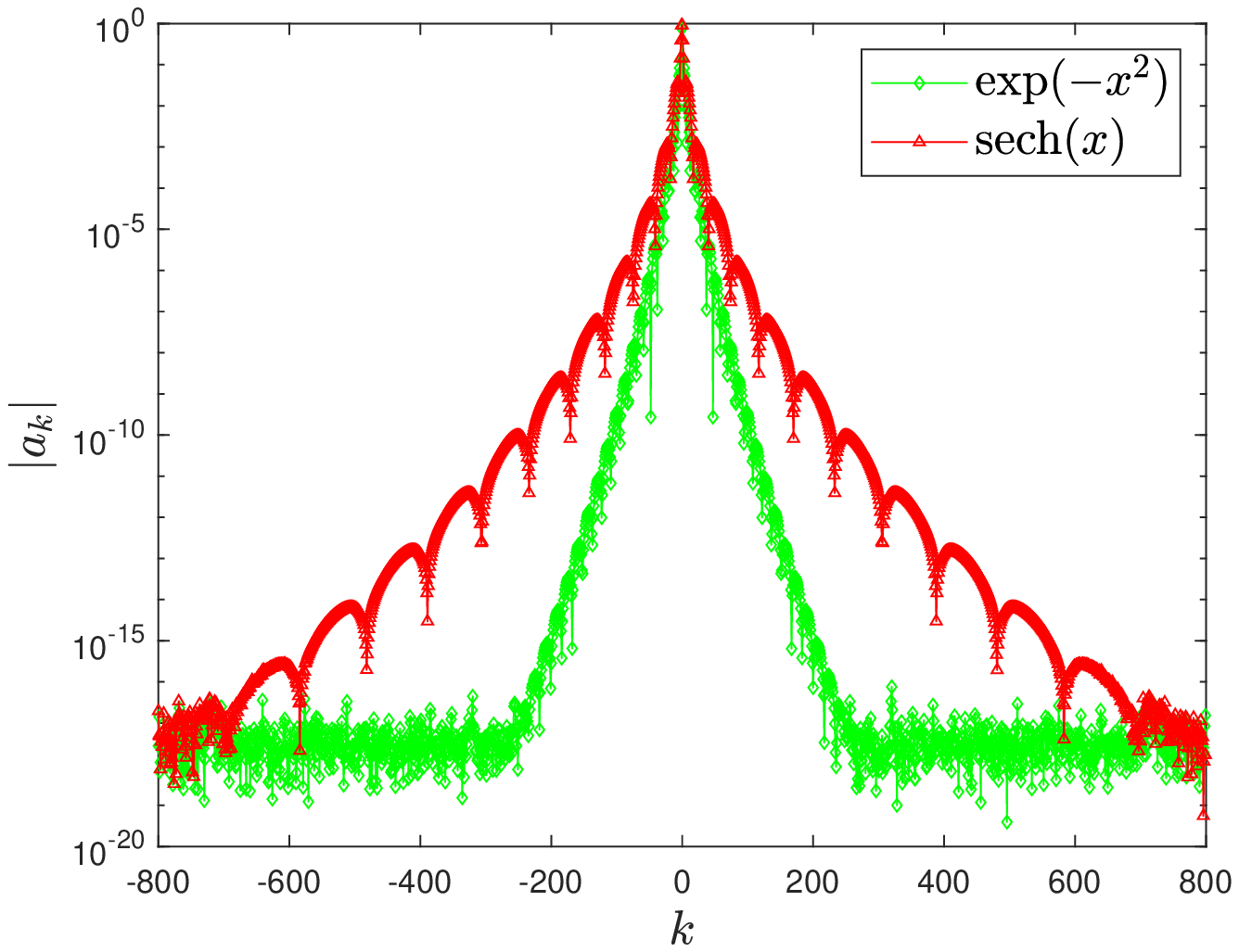}
\includegraphics[height=5.cm,width=7cm]{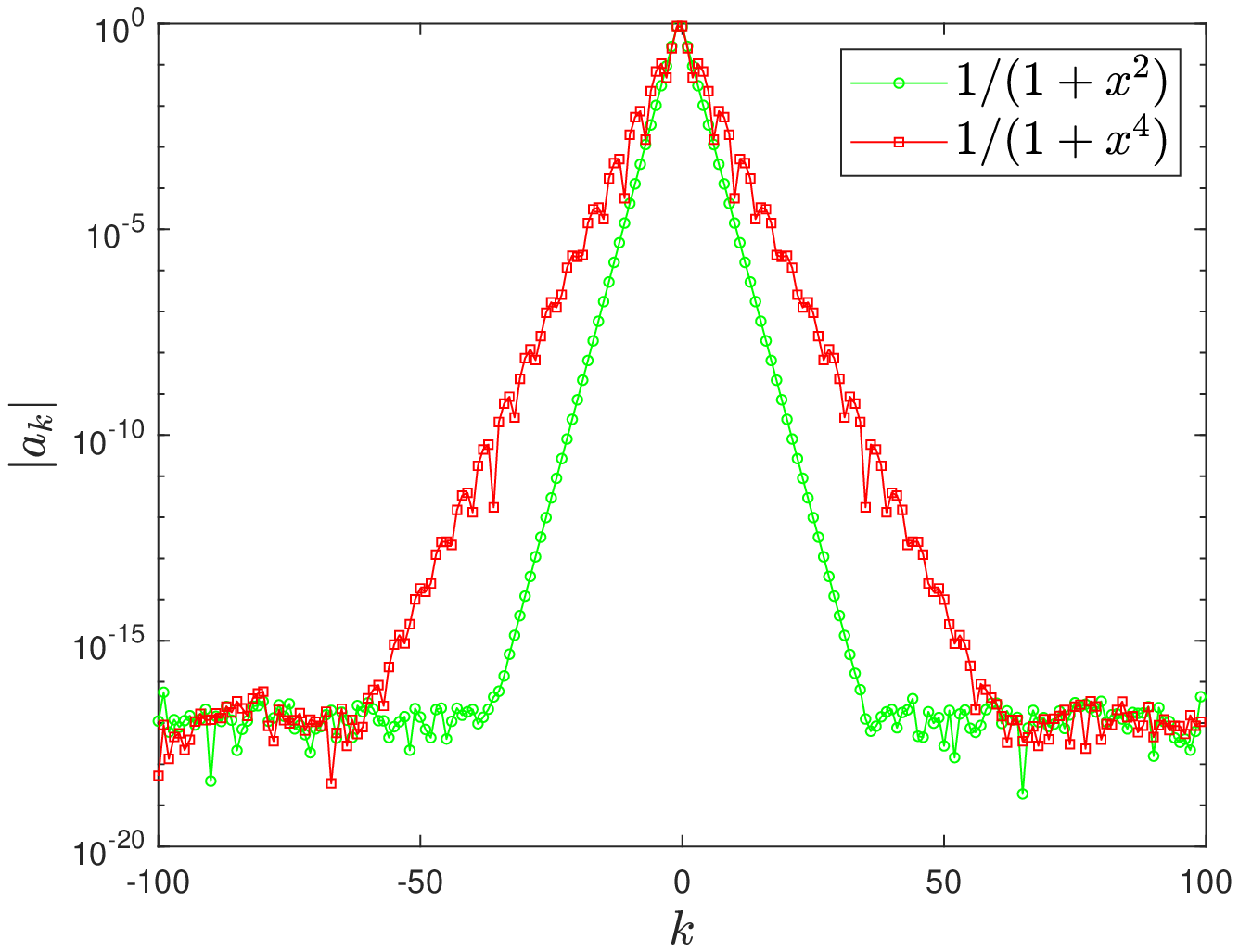}
\caption{The magnitude of the Malmquist-Takenaka coefficients of $\{a_k\}_{k\in\mathbb{Z}}$. Left: $f(x)=\exp(-x^2)$ and $\mathrm{sech}(x)$. Right: $f(x)=1/(1+x^2)$ and $1/(1+x^4)$.}
\label{fig:expalgcoeff}
\end{figure}

Having introduced basic properties of MTFs, we now move on to their spectral approximation properties. To gain insight, we plot in Figure \ref{fig:expalgcoeff} the absolute value of the Malmquist-Takenaka coefficients of $f(x)=\exp(-x^2),\mathrm{sech}(x),1/(1+x^2),1/(1+x^4)$. Clearly, we observe that the coefficients of the first two functions decay at subexponential rates and the coefficients of the last two functions decay at exponential rates. Indeed, Weideman in \cite{Weideman1995} had studied the asymptotic estimate of the coefficients $\{a_k\}_{k\in\mathbb{Z}}$ of several representative functions, including these four test functions, and his analysis provides an important insight to understand the approximation power of MTFs. Below we present a simplified version of Weideman's result on the exponential convergence of Malmquist-Takenaka approximation and we provide a short proof for the purpose of being self-contained.
\begin{theorem}\label{thm:MTRate}
Let $\Pi_Nf$ denote the spectral approximation defined in \eqref{eq:MTExp} and let $\mathcal{A}_{\rho}$ denote the annulus defined by $\mathcal{A}_{\rho}=\{z\in\mathbb{C}:\rho^{-1}<|z|<\rho\}$. Moreover, let
\begin{equation}
\hat{f}(z) = \frac{2}{1+z} f\left(\frac{z-1}{2\mathrm{i}(z+1)}\right).
\end{equation}
If $\hat{f}(z)$ is analytic in $\mathcal{A}_{\rho}$ for some $\rho>1$. Then for $x\in\mathbb{R}$,
\begin{align}\label{eq:MTErrorBound}
\|f - \Pi_Nf \|_{L^{\infty}(\mathbb{R})} =\mathcal{O}(\rho^{-N}).
\end{align}
\end{theorem}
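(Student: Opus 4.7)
The plan is to push the entire problem onto the unit circle via the Cayley transform $z=(1+2\mathrm{i}x)/(1-2\mathrm{i}x)$, where the MTFs become (up to a simple prefactor) the Laurent monomials $z^n$, and the error estimate reduces to a textbook Laurent-coefficient decay bound.

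First, I would verify that under the substitution $z=(1+2\mathrm{i}x)/(1-2\mathrm{i}x)$, equivalently $x=(z-1)/(2\mathrm{i}(z+1))$, one has $1-2\mathrm{i}x=2/(z+1)$ and $1+2\mathrm{i}x=2z/(z+1)$, so the MTFs collapse to
\begin{equation*}
\varphi_n(x)=\frac{\mathrm{i}^n}{\sqrt{2\pi}}\,(1+z)\,z^n,\qquad n\in\mathbb{Z}.
\end{equation*}
Since the map sends $\mathbb{R}$ bijectively to the unit circle $|z|=1$ (with $\infty\mapsto -1$), the MT expansion $f=\sum_k a_k\varphi_k$ translates directly into the Laurent series
\begin{equation*}
\hat f(z)=\frac{2}{1+z}f\!\left(\frac{z-1}{2\mathrm{i}(z+1)}\right)=\sqrt{\tfrac{2}{\pi}}\sum_{k\in\mathbb{Z}}\mathrm{i}^k a_k\,z^k,
\end{equation*}
so that $c_k:=\sqrt{2/\pi}\,\mathrm{i}^k a_k$ are precisely the Laurent coefficients of $\hat f$ on the unit circle.

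Next I would invoke the analyticity hypothesis. Because $\hat f$ is analytic in the annulus $\mathcal{A}_\rho$, for any $1<r<\rho$ Cauchy's formula on circles $|z|=r$ and $|z|=r^{-1}$ gives the standard two-sided estimate $|c_k|\le M_r\,r^{-|k|}$, where $M_r=\max_{z\in\partial\mathcal{A}_r}|\hat f(z)|$. Letting $r\to\rho^-$ yields $|a_k|=\mathcal{O}(\rho^{-|k|})$, where the implied constant can be taken uniform in $k$ by fixing any convenient $r<\rho$.

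Finally I would combine this coefficient decay with the pointwise bound $|\varphi_k(x)|\le\sqrt{2/\pi}$ from \eqref{eq:UB}. Since $\Pi_Nf=\sum_{k=-N}^{N-1}a_k\varphi_k$, the triangle inequality gives
\begin{equation*}
\|f-\Pi_Nf\|_{L^\infty(\mathbb{R})}\le\sqrt{\tfrac{2}{\pi}}\Bigl(\sum_{k\ge N}|a_k|+\sum_{k\le -N-1}|a_k|\Bigr)=\mathcal{O}\Bigl(\sum_{k\ge N}\rho^{-k}\Bigr)=\mathcal{O}(\rho^{-N}),
\end{equation*}
using the two geometric tails. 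The only nontrivial step is the initial algebraic identification of $\varphi_n$ with $(1+z)z^n/\sqrt{2\pi}$ up to the phase $\mathrm{i}^n$; everything else is then a one-line reduction to the classical Laurent estimate. No obstacle is expected beyond keeping the prefactors consistent.
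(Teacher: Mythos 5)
Your proposal is correct and follows essentially the same route as the paper: transform to the unit circle via $z=(1+2\mathrm{i}x)/(1-2\mathrm{i}x)$, identify the coefficients $a_k$ (up to unimodular constants) with the Laurent coefficients of $\hat f$, apply the standard Cauchy estimate in the annulus, and finish with the triangle inequality together with the uniform bound \eqref{eq:UB}. The only cosmetic difference is that you obtain the identification by rewriting $\varphi_n$ as $\mathrm{i}^n(1+z)z^n/\sqrt{2\pi}$ and matching series, whereas the paper computes $(f,\varphi_k)$ directly as a contour integral; both versions also share the same harmless imprecision of stating $a_k=\mathcal{O}(\rho^{-|k|})$ rather than $\mathcal{O}(r^{-|k|})$ for each $r<\rho$.
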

\begin{proof}
By using the map $z=(1+2\mathrm{i}x)/(1-2\mathrm{i}x)$, we have
\begin{align}
a_k &= (f,\varphi_k) = \frac{(-\mathrm{i})^{k+1}}{\sqrt{2\pi}} \oint_{|z|=1} \hat{f}(z) z^{-k} \mathrm{d}z, \nonumber
\end{align}
which implies that $a_k$ is also the Laurent coefficient of $\hat{f}(z)$. Furthermore, since $\hat{f}(z)$ is analytic in the annulus $\mathcal{A}_{\rho}$, it follows that $a_k=\mathcal{O}(\rho^{-|k|})$ for all $k\in\mathbb{Z}$. Consequently,
\begin{align}
\|f - \Pi_Nf \|_{L^{\infty}(\mathbb{R})} \leq \sum_{k=N}^{\infty} |a_k| \|\varphi_k\|_{L^{\infty}(\mathbb{R})}
+ \sum_{k=-\infty}^{-N-1} |a_k| \|\varphi_k\|_{L^{\infty}(\mathbb{R})}, \nonumber
\end{align}
and the desired result \eqref{eq:MTErrorBound} follows by combining the above inequality with \eqref{eq:UB}.
\end{proof}

A few remarks on the approximation power of MTFs are in order.
\begin{remark}
If $f(x)$ has a partial fraction decomposition of the form
\begin{equation}\label{eq:fraction}
f(x) = \sum_{k=1}^{m} \sum_{j=1}^{\chi_k} \frac{\eta_{k,j}}{(x-s_k)^j},
\end{equation}
where $\{s_k\}_{k=1}^{m}$ is a set of poles in the complex plane but not on $\mathbb{R}$ and $\{\chi_k\}_{k=1}^{m}$ is a set of orders associated with those poles, then the above theorem indicates that the Malmquist-Takenaka projection $\Pi_Nf$ converges at an exponential rate.
\end{remark}

\begin{remark}
Recently, Iserles, Luong and Webb in \cite{Iserles2021} compared the approximation power of the Malmquist-Takenaka, Hermite and stretched Fourier functions for Gaussian wave packet functions of the form $f(x) = \exp(-\beta(x-x_0)^2)\cos(\omega{x})$, where $\beta>0$ and $x_0,\omega\in\mathbb{R}$. After some lengthy algebra, they derived the decay rates of the coefficients with respect to these three orthogonal systems, respectively, and concluded that the MTFs are superior to the other two functions. Note that all those three functions have banded skew-Hermitian differentiation matrices.
\end{remark}

\begin{remark}
Mapped Chebyshev functions (MCFs) were recently used to develop spectral methods for PDEs with fractional Laplacian on the unbounded domain \cite{Sheng2020MCF}. Specifically, the MCF is defined by
\begin{equation}
\mathbb{T}_k(x) = \frac{1}{\sqrt{c_k\pi/2}} \frac{1}{\sqrt{1+x^2}} T_k\left( \frac{x}{\sqrt{1+x^2}} \right), \quad k = 0,1,\ldots. \nonumber
\end{equation}
where $c_0=2$ and $c_k=1$ for $k\geq1$ and $T_k(x)$ is the Chebyshev polynomial of the first kind of degree $k$. It is easy to verify that $\{\mathbb{T}_k(x)\}_{k=0}^{\infty}$ forms an orthonormal system on $\mathbb{R}$. Let $S_N^{\mathrm{MC}}(x)$ denote the MCF approximation of the form
\begin{equation}
S_N^{\mathrm{MC}}(x) = \sum_{k=0}^{2N-1} a_k^{\mathrm{MC}} \mathbb{T}_k(x), \quad  a_k^{\mathrm{MC}} = \int_{\mathbb{R}} f(x) \mathbb{T}_k(x) \mathrm{d}x.
\end{equation}
Concerning MTF and MCF approximations, it is natural to ask which one is better. Note that both $\Pi_Nf$ and $S_N^{\mathrm{MC}}(x)$ have the same number of terms. Figure \ref{fig:algfuncom} displays the maximum errors of $\Pi_N(f)$ and $S_N^{\mathrm{MC}}(x)$ for $f(x)=\exp(-x^2)/(1+\mathrm{i}x)$, $\exp(-x^2)$, $1/(x^2+4)$, $\mathrm{sech}(x)$, $1/(x^4+1)$ and $1/(x^4+1)^{1.2}$. Clearly, we can see that both approximations have their own advantages. A systematic study on the comparison of both approximations is beyond the scope of the present paper and will be discussed in another publication.
\end{remark}

\begin{figure}
\centering
\includegraphics[height=5.cm,width=7cm]{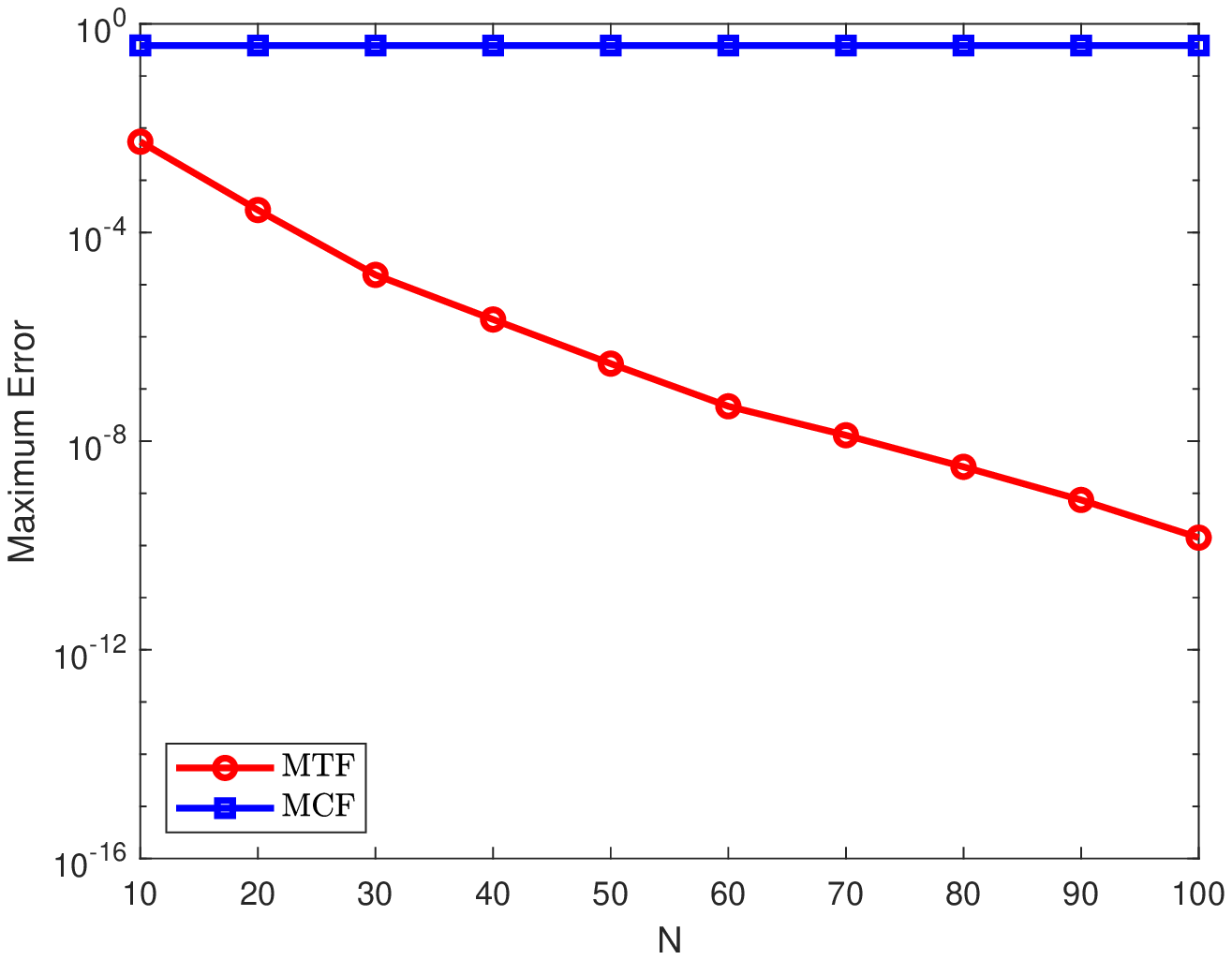}
\includegraphics[height=5.cm,width=7cm]{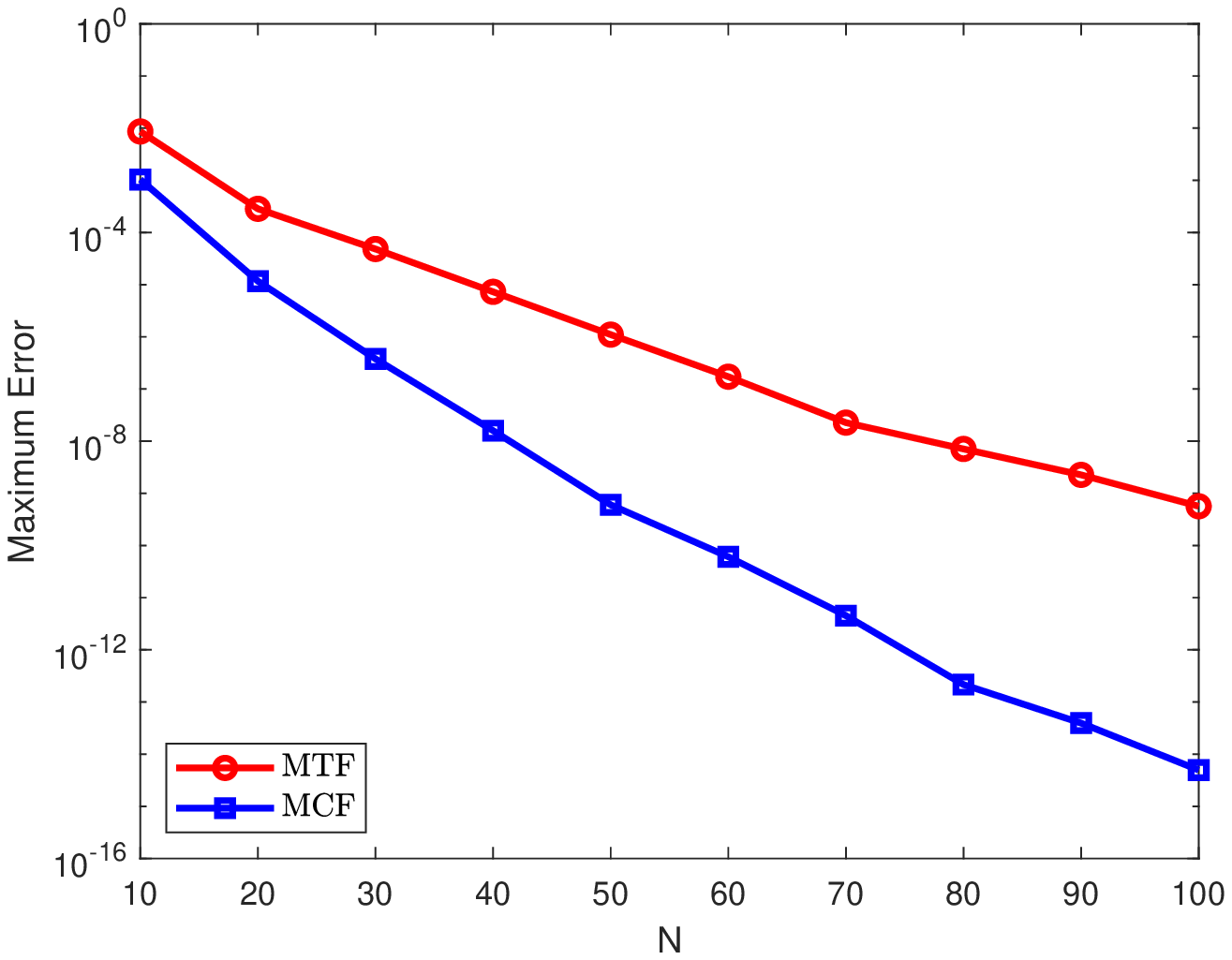}
\includegraphics[height=5.cm,width=7cm]{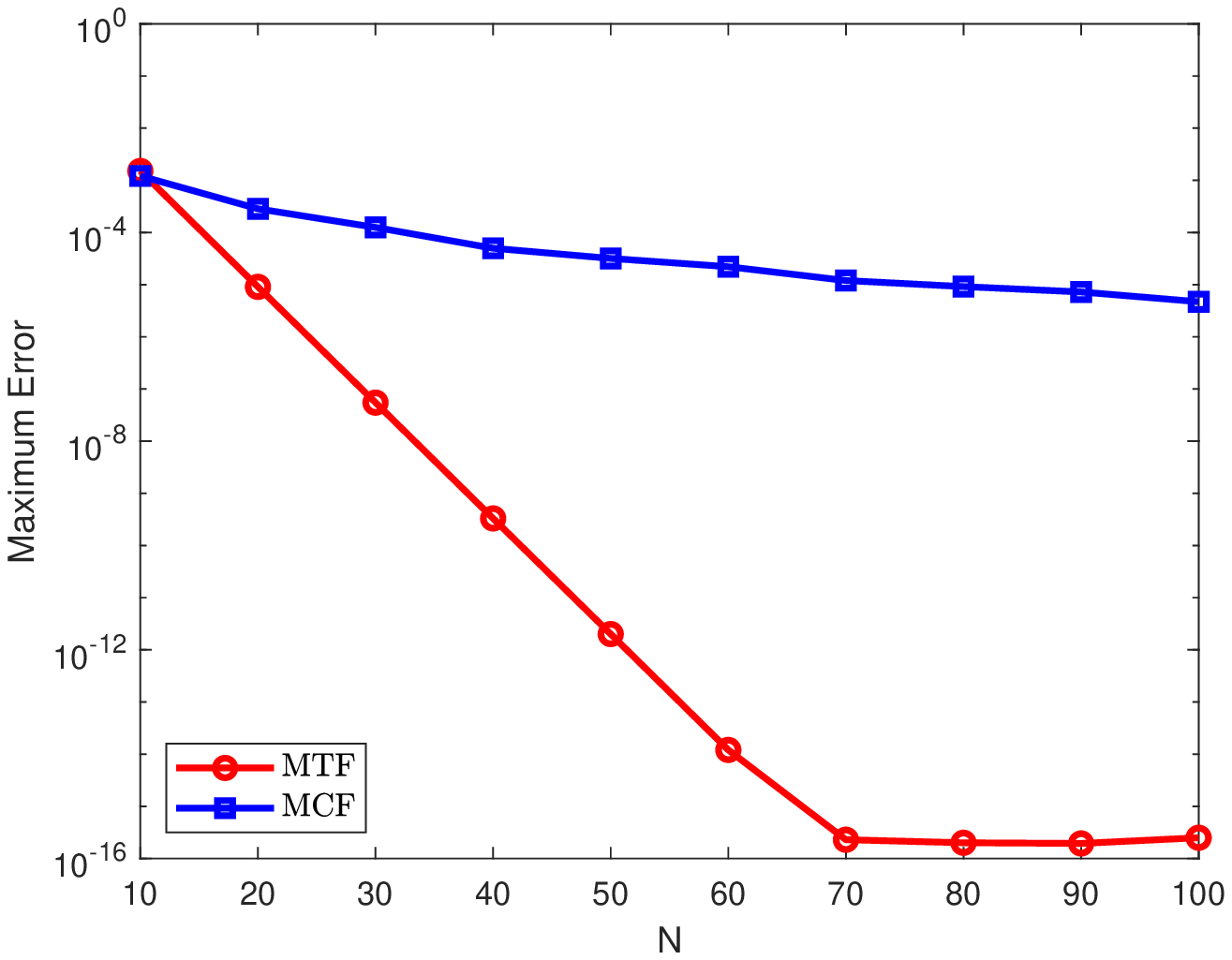}
\includegraphics[height=5.cm,width=7cm]{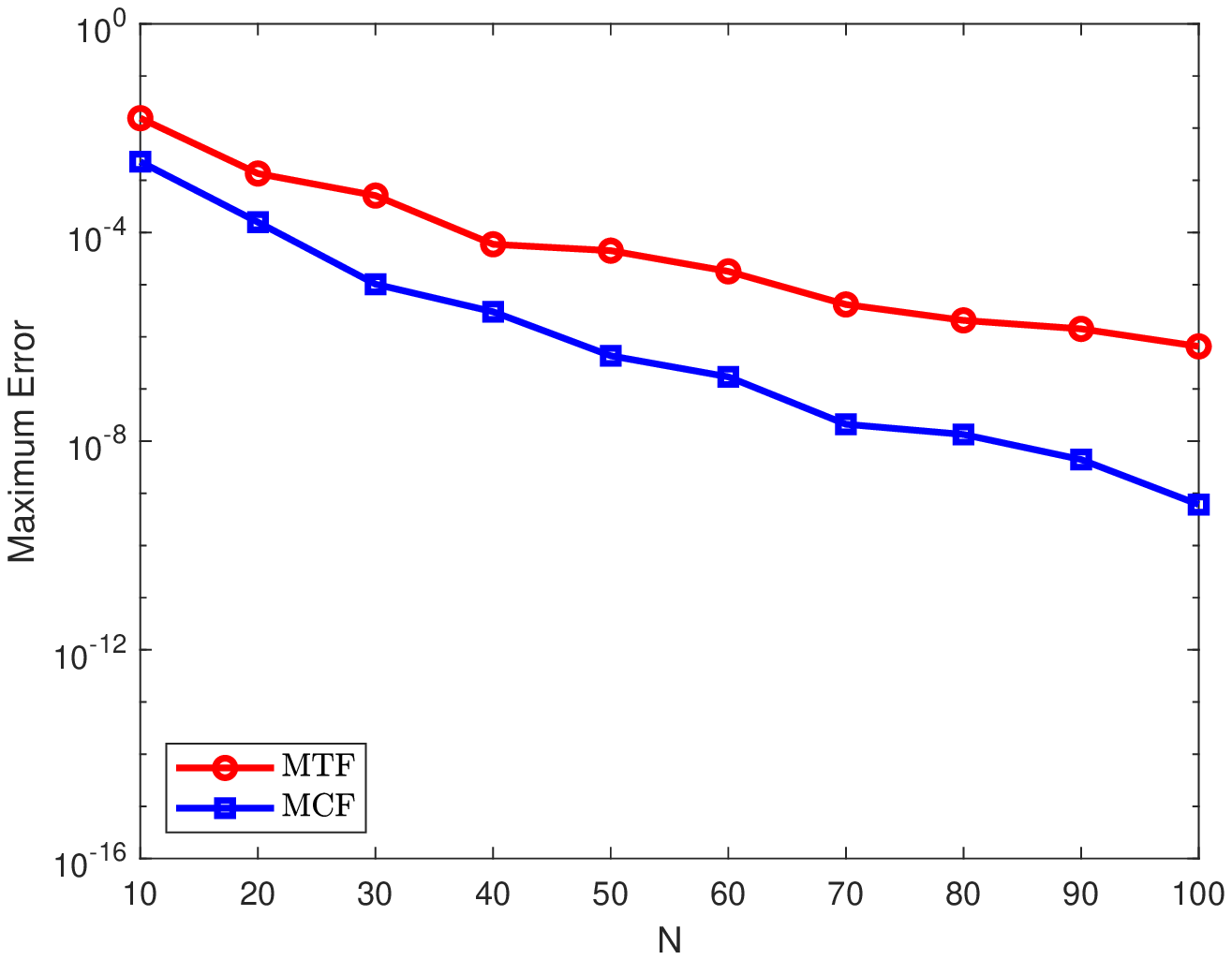}
\includegraphics[height=5.cm,width=7cm]{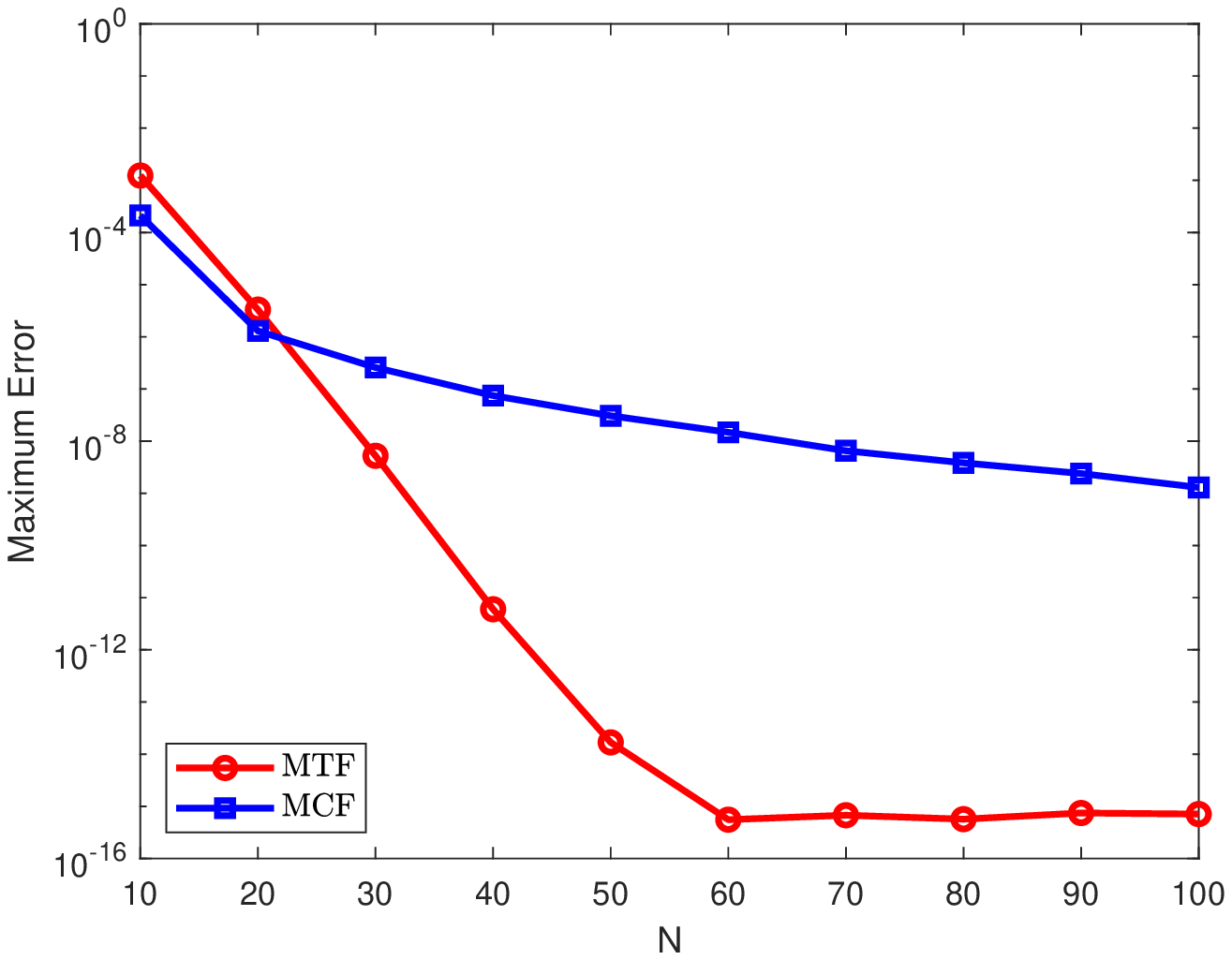}
\includegraphics[height=5.cm,width=7cm]{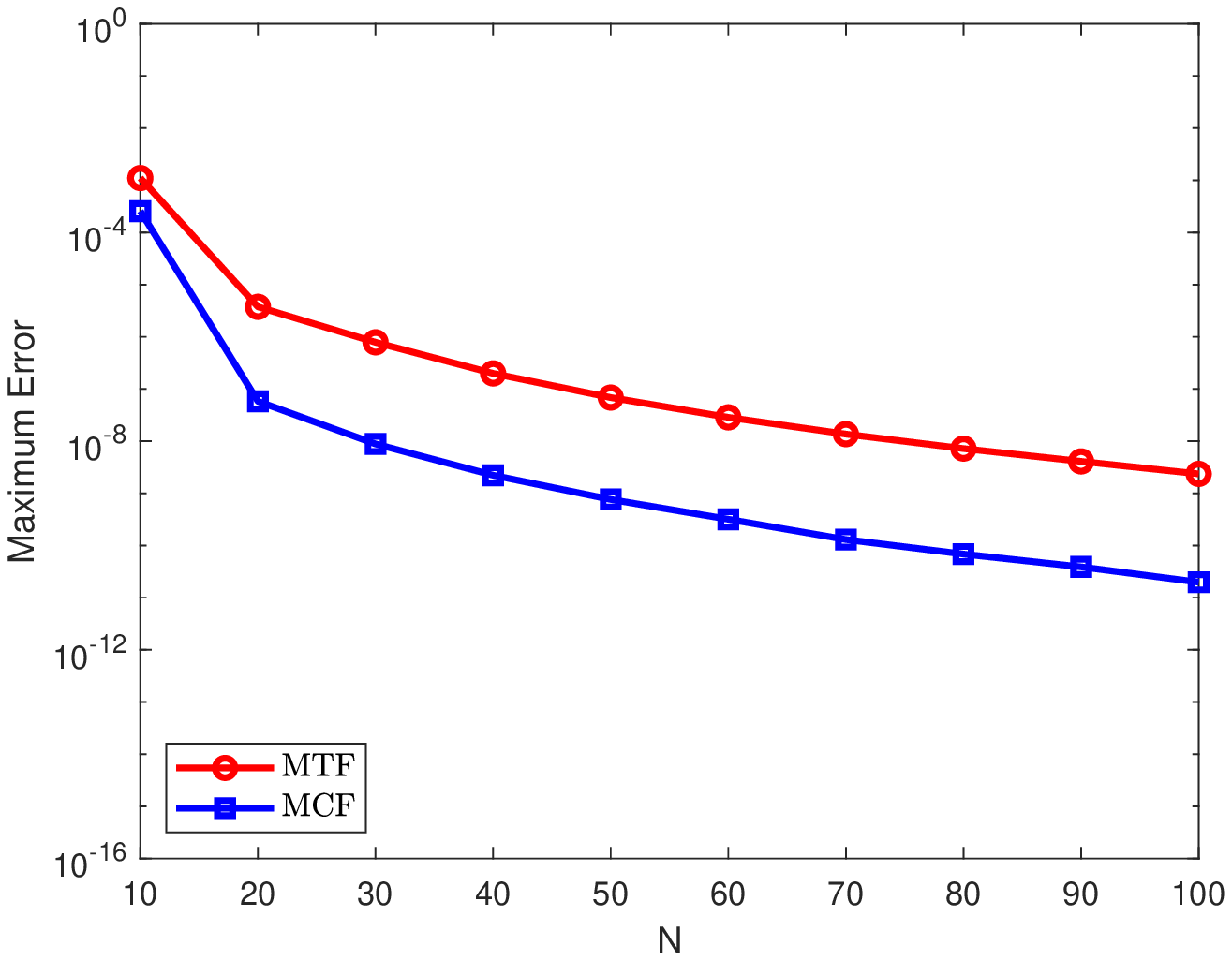}
\caption{Maximum errors of MTF and MCF approximations for $f(x)=\exp(-x^2)/(1+\mathrm{i}x)$, $1/(x^2+4)$, $1/(x^4+1)$ (left column) and $f(x)=\exp(-x^2)$, $\mathrm{sech}(x)$, $1/(x^4+1)^{1.2}$ (right column).}
\label{fig:algfuncom}
\end{figure}

\begin{remark}
In practical calculations, it is beneficial to introduce a scaling parameter in MTFs. More specifically, the scaled MTFs are defined by
\begin{equation}\label{eq:Scaling}
\varphi_n^{\mathrm{S}}(x) = \nu^{-1/2} \varphi_n\left(\frac{x}{\nu}\right), \quad  n\in\mathbb{Z},
\end{equation}
where $\nu>0$ is a scaling parameter. It is easy to check that $\{\varphi_n^{\mathrm{S}}\}_{n\in\mathbb{Z}}$ forms an orthonormal sequence on $\mathbb{R}$. In the rest of this paper, we will use the scaled Malmquist-Takenaka approximations when we mention the scaling parameter $\nu$ explicitly.
\end{remark}

The generalized Laguerre polynomials (GLPs), denoted by $L_n^{(\alpha)}(x)$ with $\alpha>-1$, are orthogonal with respect to the weight function $\omega_{\alpha}(x)=x^{\alpha}e^{-x}$ on the half line $\mathbb{R}_+:=(0,+\infty)$ and
\begin{equation}
\int_{\mathbb{R}_{+}} L_n^{(\alpha)}(x) L_m^{(\alpha)}(x) \omega_{\alpha}(x) \mathrm{d}x = \gamma_n^{(\alpha)}\delta_{n,m},
\end{equation}
where $\gamma_n^{(\alpha)}=\Gamma(n+\alpha+1)/\Gamma(n+1)$. In particular, we will drop the superscript in $L_n^{(\alpha)}(x)$ whenever $\alpha=0$, i.e., $L_n^{(0)}(x)=L_n(x)$. With the Laguerre polynomials we introduce a sequence of functions on the real line for all $n\in\mathbb{Z}$ (see \cite{Keilson1981,Weber1980})
\begin{equation}\label{def:Psi}
\Psi_n(x) = \left\{
\begin{array}{ll}
e^{-x/2}L_n(x)H(x),         &  n\geq0,  \\[8pt]
-e^{x/2}L_{-n-1}(-x)H(-x),  &  n<0,
\end{array}
\right.
\end{equation}
where $H(x)$ is the Heaviside step function. It is easy to check that $\Psi_n(x)=-\Psi_{-n-1}(-x)$ for all $n\in\mathbb{Z}$ and $\{\Psi_n\}_{n\in\mathbb{Z}}$ forms an orthonormal function sequence on the real line.

In the following result, we state the connection between $\{\Psi_n\}_{n\in\mathbb{Z}}$ and $\{\varphi_n\}_{n\in\mathbb{Z}}$ and present an explicit formula for the fractional Laplacian of $\{\varphi_n\}_{n\in\mathbb{Z}}$.
\begin{lemma}\label{lem:FTvarphi}
The Fourier transform of $\varphi_n(x)$ is
\begin{equation}\label{eq:FTvarphi}
\mathcal{F}[\varphi_n](\xi) = (-\mathrm{i})^n \Psi_n(\xi),
\end{equation}
and the fractional Laplacian of $\varphi_n(x)$ is
\begin{equation}\label{eq:FLvarphi}
(-\Delta)^{\alpha/2}\varphi_n(x) = \frac{\Gamma(\alpha+1)}{\sqrt{2\pi}} \left\{
\begin{array}{ll}
{\displaystyle \frac{(-\mathrm{i})^n}{(\frac{1}{2}-\mathrm{i}x)^{\alpha+1}}  ~ _2F_1\left(
\begin{gathered}
-n,\alpha+1\\ 1
\end{gathered}
\middle|\, \frac{1}{\frac{1}{2}-\mathrm{i}x}\right)},         &  n\geq0,  \\[14pt]
{\displaystyle \frac{-(-\mathrm{i})^n}{(\frac{1}{2}+\mathrm{i}x)^{\alpha+1}}  ~ _2F_1\left(
\begin{gathered}
n+1,\alpha+1\\ 1
\end{gathered}
\middle|\, \frac{1}{\frac{1}{2}+\mathrm{i}x}\right)},         &  n<0,
\end{array}
\right.
\end{equation}
where $_2F_1(\cdot)$ is the Gauss hypergeometric function (see, e.g., \cite[Chapter~15]{Olver2010}).
\end{lemma}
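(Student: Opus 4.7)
The strategy is to reduce both identities to the classical Laplace transform of the Laguerre polynomials, namely $\int_0^\infty e^{-s\xi}L_n(\xi)\mathrm{d}\xi = (s-1)^n/s^{n+1}$ valid for $\Re s>0$.

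For part (a), I would first specialize this formula to $s=1/2+\mathrm{i}x$ with $x\in\mathbb{R}$. The left-hand side is then exactly the Fourier transform $\int_{\mathbb{R}}\Psi_n(\xi)e^{-\mathrm{i}x\xi}\mathrm{d}\xi$ for $n\geq 0$, because the support of $\Psi_n$ is $[0,\infty)$. On the right-hand side, a short algebraic manipulation (factoring $1/2$ out of numerator and denominator and tracking the powers of $\mathrm{i}$) rewrites $(s-1)^n/s^{n+1}$ as a scalar multiple of $\varphi_n(-x)$. Fourier inversion then delivers $\mathcal{F}[\varphi_n](\xi)=(-\mathrm{i})^n\Psi_n(\xi)$ for $n\geq 0$. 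The case $n<0$ is obtained from the built-in symmetry $\Psi_n(x)=-\Psi_{-n-1}(-x)$ together with the corresponding reflection behaviour of $\varphi_n$, which itself can be read off directly from the definition \eqref{def:MT}.

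For part (b), I would combine (a) with the pseudo-differential definition $(-\Delta)^{\alpha/2}\varphi_n(x)=\mathcal{F}^{-1}[|\xi|^\alpha\mathcal{F}[\varphi_n](\xi)](x)$. For $n\geq 0$ this reduces the computation to evaluating $\int_0^\infty \xi^\alpha L_n(\xi)e^{-s\xi}\mathrm{d}\xi$ at $s=1/2-\mathrm{i}x$. Expanding $L_n(\xi)=\sum_{k=0}^n\binom{n}{k}(-\xi)^k/k!$ and using $\int_0^\infty \xi^{\alpha+k}e^{-s\xi}\mathrm{d}\xi=\Gamma(\alpha+k+1)/s^{\alpha+k+1}$ produces a finite sum that I would identify with ${}_2F_1(-n,\alpha+1;1;1/s)$ by rewriting $\binom{n}{k}(-1)^k=(-n)_k/k!$ and $\Gamma(\alpha+k+1)/\Gamma(\alpha+1)=(\alpha+1)_k$ in terms of the Pochhammer symbol. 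Collecting the overall constant $\Gamma(\alpha+1)/\sqrt{2\pi}$ and the factor $(-\mathrm{i})^n$ from (a) yields the top branch of \eqref{eq:FLvarphi}. For $n<0$, $\Psi_n$ is supported on $(-\infty,0)$, so after the substitution $\eta=-\xi$ the same Laplace-type integral reappears with $s=1/2+\mathrm{i}x$ and Laguerre index $-n-1$, producing the bottom branch with the extra overall minus sign arising from the definition of $\Psi_n$ for negative indices.

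The only genuine technical nuisance is bookkeeping: one must be consistent about the chosen Fourier normalization (so that the $\sqrt{2\pi}$ in the prefactor comes out correctly) and choose the principal branches of $(1/2\pm\mathrm{i}x)^{\alpha+1}$ compatibly with the half-plane of convergence $\Re s>0$ of the Laplace transform. Once those conventions are fixed, no further analytic input is needed beyond absolute convergence of the integrals for $\alpha\in(0,2)$, and the lemma follows as a clean chain of substitutions and one hypergeometric identification.
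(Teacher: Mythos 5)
Your proposal is correct and follows essentially the same route as the paper: both reduce \eqref{eq:FTvarphi} to the Laplace transform of the Laguerre polynomials evaluated at $s=\tfrac12\pm\mathrm{i}x$ (the paper cites \cite[Eq.~(18.17.34)]{Olver2010} for exactly this identity) and both obtain \eqref{eq:FLvarphi} from the pseudo-differential definition \eqref{def:FLFT} together with the Laplace-type integral $\int_0^\infty \xi^{\alpha}L_n(\xi)e^{-s\xi}\,\mathrm{d}\xi$ (the paper cites \cite[Eq.~(7.414.7)]{Gradshteyn2007}, whereas you derive the same ${}_2F_1$ formula by term-by-term integration of the Laguerre expansion). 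The only difference is that you prove the two tabulated integrals from scratch rather than citing them, which is a matter of presentation, not of substance.
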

\begin{proof}
We only consider the case of $n\geq0$ since the case of $n\leq-1$ can be proved in a similar way. Taking the inverse Fourier transform of $\Psi_n(x)$, we obtain
\begin{align}
\mathcal{F}^{-1}[\Psi_n](x) &= \frac{1}{\sqrt{2\pi}} \int_{\mathbb{R}} \Psi_n(\xi) e^{\mathrm{i}x\xi} \mathrm{d}\xi
= \frac{1}{\sqrt{2\pi}} \int_{\mathbb{R}_{+}} e^{-\xi/2}L_n(\xi) e^{\mathrm{i}x\xi} \mathrm{d}\xi = \mathrm{i}^n \varphi_n(x), \nonumber
\end{align}
where we have used the formula \cite[Equation~(18.17.34)]{Olver2010} in the last step. The desired result \eqref{eq:FTvarphi} follows immediately by taking the Fourier transform of the above equality. As for \eqref{eq:FLvarphi}, using \eqref{def:FLFT} and \eqref{eq:FTvarphi}, we have
\begin{align}
(-\Delta)^{\alpha/2}\varphi_n(x) = (-\mathrm{i})^n \mathcal{F}^{-1}[|\xi|^{\alpha} \Psi_n(\xi)] = \frac{(-\mathrm{i})^n}{\sqrt{2\pi}}
\int_{\mathbb{R}} |\xi|^{\alpha} \Psi_n(\xi) e^{\mathrm{i}\xi x} \mathrm{d}\xi. \nonumber
\end{align}
The desired result \eqref{eq:FLvarphi} follows from applying \cite[Equation~(7.414.7)]{Gradshteyn2007} to the last equation. This ends the proof.
\end{proof}

\begin{remark}
Let $\mu_n(x)=\mathrm{i}^n(\pi/2)^{1/2}\varphi_n(-x/2)$ for $n\in\mathbb{Z}$. The fractional Laplacian of $\mu_n(x)$ was recently derived in \cite[Proposition~3.1]{Cayama2020ANM} based on the techniques of complex analysis. Here we provide an alternative approach for its derivation.
\end{remark}

\begin{remark}
From Lemma \ref{lem:FTvarphi} we obtain immediately that $\mathcal{F}[\varphi_n^{\mathrm{S}}](\xi) = (-\mathrm{i})^n \sqrt{\nu} \Psi_n(\nu\xi)$.
\end{remark}

\section{Malmquist-Takenaka spectral method}\label{sect:MTSM}
In this section we present a novel spectral discretization using MTFs in space combined with time-stepping schemes for the temporal discretization for solving the equation \eqref{eq:Model}.

\subsection{Spatial discretization}
Let $\psi_N(x,t) = \sum_{k=-N}^{N-1} \zeta_k(t) \varphi_k(x)$ be the spectral approximation to the solution of \eqref{eq:Model} and let $U(t)$ denote the coefficient vector of $\psi_N(x,t)$, i.e., $U(t)=(\zeta_{-N}(t),\ldots,\zeta_{N-1}(t))^{T}$. From \eqref{eq:UB} it is easy to see that $\psi_N(x,t)$ automatically satisfies the boundary condition $\lim_{|x|\rightarrow{\infty}}\psi_N(x,t)=0$. Our spectral Galerkin method is to find $\psi_N\in{\mathbb{V}_{N}}(\mathbb{R})$ such that
\begin{align}\label{eq:SGM}
\mathrm{i} (\partial_{t}{\psi_N},\phi) &= \gamma ((-\Delta)^{\alpha/2}\psi_N,\phi) + (\mathcal{T}\psi_N,\phi), \quad \forall \phi\in{\mathbb{V}_{N}(\mathbb{R})},
\end{align}
and $\psi_N(x,0)=\Pi_N\psi_0(x)$. Setting $\phi(x)=\varphi_j(x)$ in \eqref{eq:SGM} with $j=-N,\ldots,N-1$ and using the orthogonality property of MTFs, we obtain that
\begin{align}\label{eq:LS}
U'(t) = -\mathrm{i}\gamma A U(t) - \mathrm{i} \mathcal{N}(U,t),
\end{align}
where $A\in\mathbb{C}^{2N\times{2N}}$, $\mathcal{N}(U,t)\in\mathbb{C}^{2N}$ are defined by
\begin{align}\label{def:AB}
A = \big( (-\Delta)^{\alpha/2}\varphi_{k}, \varphi_{j} \big)_{j,k=-N}^{N-1}, \quad
\mathcal{N}(U,t) = \big(\mathcal{T}\psi_N,\varphi_{j} \big)_{j=-N}^{N-1}.
\end{align}
Furthermore, recalling the Parseval's equality, i.e.,
\begin{equation}
\int_{\mathbb{R}} f(x) \overline{g(x)} \mathrm{d}x = \int_{\mathbb{R}} \mathcal{F}[f](\xi) \overline{\mathcal{F}[g](\xi)} \mathrm{d}\xi, \quad  \forall f,g\in L^2(\mathbb{R}), \nonumber
\end{equation}
it follows that $((-\Delta)^{\alpha/2}\varphi_{k},\varphi_{j})=(|\xi|^{\alpha}\mathcal{F}[\varphi_{k}], \mathcal{F}[\varphi_{j}])$. Hence, the matrix $A$ in \eqref{def:AB} can also be written as
\begin{align}\label{eq:AA}
A = \big( |\xi|^{\alpha}\mathcal{F}[\varphi_{k}], \mathcal{F}[\varphi_{j}] \big)_{j,k=-N}^{N-1}.
\end{align}
Now, we consider the elements of the matrix $A$.
\begin{lemma}\label{thm:MatA}
Let $A$ be the matrix defined in \eqref{def:AB} or \eqref{eq:AA}. Then $A$ is a Hermitian matrix and can be written as a block two-by-two diagonal matrix of the form
\begin{equation}\label{eq:A2}
A = \left[
      \begin{array}{cc}
        PCP &   \\
          & C \\
      \end{array}
    \right],
\end{equation}
where $P\in\mathbb{R}^{N\times N}$ is the permutation matrix which reverses the order of a vector, i.e., $P(x_1,\ldots,x_N)^{T}=(x_N,\ldots,x_1)^T$, and $C\in\mathbb{C}^{N\times N}$ is a Hermitian matrix whose elements are given by
\begin{equation}\label{eq:C}
C_{j,k} = \mathrm{i}^{j-k} \sum_{\ell=0}^{\min\{j,k\}} \frac{(\alpha+1)_{\ell} (-\alpha)_{k-\ell} (-\alpha)_{j-\ell}}{\ell! (k-\ell)! (j-\ell)!}, \quad j,k=0,\ldots,N-1,
\end{equation}
and $(z)_n$ is the Pochhammer symbol defined by $(z)_n=(z)_{n-1}(z+n-1)$ for $n\geq1$ and $(z)_0=1$. In the particular case of $\alpha=1$, then $C$ reduces to a tridiagonal and Hermitian matrix whose elements are given explicitly by
\begin{equation}\label{eq:CI}
C_{j,k} =
\begin{cases}
j(-\mathrm{i}),    \quad  &k=j-1, \\
(2j+1),            \quad  &k=j,   \\
(j+1)\mathrm{i},   \quad  &k=j+1.
\end{cases} \quad  j,k=0,\ldots,N-1.
\end{equation}
\end{lemma}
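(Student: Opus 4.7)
The plan is to work throughout in the Fourier domain using Parseval and Lemma~\ref{lem:FTvarphi}. I would first rewrite every entry as
\[
A_{j,k} = \mathrm{i}^{j-k}\int_{\mathbb{R}} |\xi|^\alpha\,\Psi_j(\xi)\Psi_k(\xi)\,\mathrm{d}\xi,
\]
which follows from $\mathcal{F}[\varphi_n]=(-\mathrm{i})^n\Psi_n$, the reality of $\Psi_n$, and the identity $(-\mathrm{i})^k\overline{(-\mathrm{i})^j}=\mathrm{i}^{j-k}$. Hermitian-ness is then immediate since the integral is symmetric in $j,k$ and $\overline{\mathrm{i}^{j-k}}=\mathrm{i}^{k-j}$. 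The block-diagonal structure falls out of the support property of $\Psi_n$ displayed in \eqref{def:Psi}: the function is supported in $\mathbb{R}_+$ when $n\geq 0$ and in $\mathbb{R}_-$ when $n\leq -1$, so whenever $j$ and $k$ have opposite signs the integrand vanishes identically and $A_{j,k}=0$.

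For the positive-index block ($j,k\geq 0$) the integral reduces to $I_{j,k} := \int_0^\infty \xi^\alpha e^{-\xi} L_j(\xi) L_k(\xi)\,\mathrm{d}\xi$, so $C_{j,k}=\mathrm{i}^{j-k}I_{j,k}$. To obtain the closed form \eqref{eq:C}, I would compute the bivariate generating function $F(t,s)=\sum_{j,k\geq 0} I_{j,k}\,t^j s^k$. Multiplying two copies of the standard Laguerre generating function $\sum_n L_n(x)t^n=(1-t)^{-1}\exp(-xt/(1-t))$, the two exponentials combine into one whose exponent simplifies to $-x(1-ts)/((1-t)(1-s))$, and integrating against $\xi^\alpha e^{-\xi}$ collapses everything to
\[
F(t,s) = \Gamma(\alpha+1)\,\frac{(1-t)^\alpha(1-s)^\alpha}{(1-ts)^{\alpha+1}}.
\]
Expanding $(1-t)^\alpha$, $(1-s)^\alpha$ by the binomial series (producing the factors $(-\alpha)_{j-\ell}/(j-\ell)!$ and $(-\alpha)_{k-\ell}/(k-\ell)!$) and $(1-ts)^{-\alpha-1}$ likewise (producing $(\alpha+1)_\ell/\ell!$), and reading off the coefficient of $t^j s^k$, yields precisely the single sum in \eqref{eq:C}. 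This generating-function manipulation is the main technical step; everything else is essentially bookkeeping.

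For the negative-index block I would exploit the reflection identity $\Psi_{-n-1}(-\xi)=-\Psi_n(\xi)$ noted in Section~\ref{sect:MTfun}. Substituting $\eta=-\xi$ in the integral above for $j,k\leq -1$ turns it into the same Laguerre integral $I_{j',k'}$ with $j'=-j-1$, $k'=-k-1$. Combining with the phase relation $\mathrm{i}^{j-k}=\mathrm{i}^{k'-j'}$, and noting that the index map $n\mapsto -n-1$ sends $\{-N,\ldots,-1\}$ bijectively and in reverse order onto $\{0,\ldots,N-1\}$, this identifies the negative block with $PCP$ — the reversal matrix $P$ encoding exactly the order-reversal of the index map.

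Finally, for $\alpha=1$, rather than specializing \eqref{eq:C}, I would prefer to use $(-\Delta)^{1/2}f=\mathcal{H}[f']$ from property~(iv) directly. Applying the differential recurrence \eqref{eq:DiffI} to $\varphi_k$ and then the eigenfunction relation $\mathcal{H}[\varphi_n]=-\mathrm{i}\,\mathrm{sgn}(n)\varphi_n$ term-by-term expresses $(-\Delta)^{1/2}\varphi_k$ as a three-term linear combination of $\varphi_{k-1},\varphi_k,\varphi_{k+1}$ with the explicit coefficients $\mathrm{i}k$, $2k+1$, $-\mathrm{i}(k+1)$; the claimed entries in \eqref{eq:CI} then follow by orthonormality \eqref{eq:MTOrth}. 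The boundary case $k=0$ is harmless because the coefficient of $\varphi_{k-1}$ in \eqref{eq:DiffI} already vanishes, so no question about $\mathrm{sgn}(-1)$ arises. The hard part of the whole lemma is the generating-function computation in the second paragraph; the remaining pieces reduce to short verifications once Lemma~\ref{lem:FTvarphi} and the Hilbert-transform eigenvalues are in hand.
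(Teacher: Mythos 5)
Your proposal is correct, and its opening reduction (Parseval plus Lemma~\ref{lem:FTvarphi}, the support argument for the block-diagonal structure, and the reflection $\Psi_{-n-1}(-\xi)=-\Psi_n(\xi)$ identifying the negative block with $PCP$) coincides with the paper's; the two computational cores, however, are handled by genuinely different means. For general $\alpha$ the paper does not use generating functions: it expands $L_n$ in the family $\{L^{(\alpha)}_j\}$ via the connection formula \eqref{eq:LagConv} and invokes orthogonality of $\{L_j^{(\alpha)}\}$, which reaches \eqref{eq:C} in two lines and, as a byproduct, exhibits $C$ (up to conjugation by the unimodular diagonal $\mathrm{diag}(\mathrm{i}^{j})$) as $BB^{T}$ for a nonsingular lower-triangular real $B$ --- see \eqref{eq:CII} --- so positive definiteness of $A$ comes for free. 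Your bivariate generating-function identity $F(t,s)=\Gamma(\alpha+1)(1-t)^{\alpha}(1-s)^{\alpha}(1-ts)^{-\alpha-1}$ is classical and your exponent algebra checks out; it is more self-contained in that it needs only the standard Laguerre generating function rather than the connection coefficients. One small caveat: your expansion produces $\Gamma(\alpha+1)$ \emph{times} the sum in \eqref{eq:C}, not that sum ``precisely''; note, though, that the paper's own route produces the same factor (since $\gamma_\ell^{(\alpha)}=\Gamma(\alpha+1)(\alpha+1)_\ell/\ell!$), so the absent $\Gamma(\alpha+1)$ is a slip in the displayed \eqref{eq:C} rather than an error in your derivation, and it is immaterial for $\alpha=1$ where $\Gamma(2)=1$. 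For $\alpha=1$ the paper specializes the Laguerre integral using the three-term recurrence $\xi L_k=(2k+1)L_k-kL_{k-1}-(k+1)L_{k+1}$ together with orthogonality, whereas your route via $(-\Delta)^{1/2}\varphi_k=\mathcal{H}[\varphi_k']$, the differential recurrence \eqref{eq:DiffI} and the eigenrelation \eqref{eq:Hilb} bypasses the Laguerre machinery entirely; your observation that the $k=0$ boundary term is annihilated by the factor $-k$ correctly disposes of the only delicate point (the value of $\mathrm{sgn}(-1)$), and the resulting coefficients $\mathrm{i}k$, $2k+1$, $-\mathrm{i}(k+1)$ reproduce \eqref{eq:CI} exactly.
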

\begin{proof}
Combining \eqref{eq:AA} with Lemma \ref{lem:FTvarphi} we have
\begin{align}\label{eq:AStepI}
A_{j,k} = \left( |\xi|^{\alpha}\mathcal{F}[\varphi_{k}], \mathcal{F}[\varphi_{j}] \right) &= \mathrm{i}^{j-k} \int_{\mathbb{R}} |\xi|^{\alpha} \Psi_k(\xi) \Psi_j(\xi) \mathrm{d}\xi \nonumber \\
&= \mathrm{i}^{j-k} \begin{cases}
{\displaystyle \int_{\mathbb{R}_{+}} \xi^{\alpha} e^{-\xi} L_k(\xi) L_j(\xi) \mathrm{d}\xi},    \quad  & k,j\geq0, \\[12pt]
{\displaystyle \int_{\mathbb{R}_{+}} \xi^{\alpha} e^{-\xi} L_{-k-1}(\xi) L_{-j-1}(\xi) \mathrm{d}\xi},    \quad  &k,j<0.
\end{cases}
\end{align}
It is easily seen that $A$ is a block two-by-two diagonal matrix and the first block can be derived by reversing the order of rows and columns of the second block. Therefore, we restrict our attention to the second block, which is denoted by $C$. Recalling the connection formula of Laguerre polynomials (see \cite[Equation~(18.18.18)]{Olver2010}), we have
\begin{equation}\label{eq:LagConv}
L_n(\xi) = \sum_{j=0}^n \frac{(-\alpha)_{n-j}}{(n-j)!} L^{(\alpha)}_j(\xi).
\end{equation}
The desired result \eqref{eq:C} follows by combining \eqref{eq:AStepI}, \eqref{eq:LagConv} and the orthogonality property of the Laguerre polynomials $\{L_k^{(\alpha)}(\xi)\}$. In the particular case of $\alpha=1$, recalling the three term recurrence relation of Laguerre polynomials, i.e.,
\begin{equation}
(k+1)L_{k+1}(\xi) = (2k+1-\xi)L_k(\xi) - kL_{k-1}(\xi),
\end{equation}
The desired result \eqref{eq:CI} follows immediately by combining \eqref{eq:AStepI} and the orthogonality of Laguerre polynomials $\{L_k(\xi)\}$. This ends the proof.
\end{proof}

\begin{remark}
We define
\begin{equation}\label{eq:Beta}
\beta_{\ell,n}^{(\alpha)} = \frac{(-\alpha)_{n-\ell}}{(n-\ell)!} \sqrt{\frac{(\alpha+1)_{\ell}}{\ell!}}, \quad \ell=0,\ldots,n,
\end{equation}
where $n=0,1,\ldots$. Note that $\beta_{\ell,n}^{(\alpha)}=0$ for $\alpha=1$ and $n-\ell>1$. Combining \eqref{eq:AStepI}, \eqref{eq:LagConv} and the orthogonality property of the Laguerre polynomials $\{L_k^{(\alpha)}(\xi)\}$, we obtain that
\begin{align}
C_{j,k} &= \mathrm{i}^{j-k} \int_{\mathbb{R}_{+}} \xi^{\alpha} e^{-\xi} L_k(\xi) L_j(\xi) \mathrm{d}\xi = \mathrm{i}^{j-k}\sum_{\ell=0}^{\min\{j,k\}}\beta_{\ell,j}^{(\alpha)}\beta_{\ell,k}^{(\alpha)},
\end{align}
and thus
\begin{align}\label{eq:CII}
C = \left(
      \begin{array}{ccc}
        \mathrm{i}^{0} & \cdots & \mathrm{i}^{1-N} \\
        \vdots & \ddots & \vdots \\
        \mathrm{i}^{N-1} & \cdots & \mathrm{i}^{0} \\
      \end{array}
    \right) \circ & \left[ \left(
                    \begin{array}{ccc}
                      \beta_{0,0}^{(\alpha)} &  &  \\
                      \vdots & \ddots &  \\
                      \beta_{0,N-1}^{(\alpha)} & \cdots & \beta_{N-1,N-1}^{(\alpha)} \\
                    \end{array}
                  \right) \right. \nonumber \\
                  &~~~~~ \times \left. \left(
                    \begin{array}{ccc}
                      \beta_{0,0}^{(\alpha)} &  &  \\
                      \vdots & \ddots &  \\
                      \beta_{0,N-1}^{(\alpha)} & \cdots & \beta_{N-1,N-1}^{(\alpha)} \\
                    \end{array}
                  \right)^T \right],
\end{align}
where $\circ$ denotes the Hadamard product.
\end{remark}

\subsection{Temporal discretization}
In this section we consider time-stepping schemes for the temporal discretization of \eqref{eq:LS}. We divide our discussion into two cases according to $\mathcal{T}$ is a linear or nonlinear operator.

\subsubsection{The linear case}
We restrict our attention to the case $\mathcal{T}\psi = V(x)\psi(x,t)$, where $V(x)$ is a smooth potential. From the definition of $\mathcal{N}(U,t)$ in \eqref{def:AB} we obtain $\mathcal{N}(U,t)=MU(t)$, where $M\in\mathbb{C}^{2N\times2N}$ is defined by $M_{j,k}=(\mathcal{T}\varphi_k,\varphi_j)$ with $j,k=-N,\ldots,N-1$. For $k\in\mathbb{Z}$, we define
\begin{equation}\label{def:mu}
\mu_k = \frac{\mathrm{i}^{-k}}{2\pi} \int_{-\pi}^{\pi} V\left(\frac{1}{2}\tan{\frac{\theta}{2}}\right) e^{-\mathrm{i}k\theta} \mathrm{d}\theta.
\end{equation}
It can be verified by direct calculation that $M_{j,k}=\mu_{j-k}$, and thus
\begin{equation}\label{def:M}
M = \left(
    \begin{array}{cccc}
       \mu_0 & \mu_{-1} & \cdots & \mu_{1-2N} \\
       \mu_1 & \mu_0    & \cdots & \mu_{2-2N} \\
       \vdots & \vdots  & \ddots & \vdots     \\
       \mu_{2N-1} & \mu_{2N-2} & \cdots  & \mu_0
    \end{array}
    \right).
\end{equation}
Clearly, we see that $M$ is a Toeplitz and Hermitian matrix. Furthermore, observe that the integrand on the right-hand of \eqref{def:mu} is periodic in $\theta$ with period $2\pi$, the elements of $M$ (i.e., $\{\mu_k\}_{k=1-2N}^{2N-1}$) can be computed rapidly with the FFT in $\mathcal{O}(N\log N)$ operations.

Let us now turn to the numerical solution of \eqref{eq:LS}. In view of $\mathcal{N}(U,t)=MU(t)$, we obtain the following ODE system
\begin{equation}\label{eq:DEU}
U'(t) = -\mathrm{i}(\gamma{A}+M)U(t).
\end{equation}
The exact solution of \eqref{eq:DEU} is $U(t) = \exp(-\mathrm{i}(\gamma{A}+M)t)U(0)$ and $U(0)$ can be computed from the Malmquist-Takenaka coefficients of $\psi_0(x)$ by the FFT. Note that the exact solution requires the computation of the matrix exponential $\exp(-\mathrm{i}(\gamma{A}+M)t)$, which is generally not a good idea to compute it directly. Indeed, as we will see in Figures \ref{fig:C} and \ref{fig:decaypro}, the decay rates of the magnitudes of the elements of $A$ and $M$ (or, equivalently, $C$ and $M$) are quite different. To avoid this issue, we consider the use of splitting method to solve \eqref{eq:DEU}. Specifically, let $t_k=k\tau$ denote the time grid points, where $\tau>0$ is the time step size, and let $U_k$ denote the approximation to the exact value $U(t_k)$ and $U_0=U(0)$. From $t_{n-1}$ to $t_n$, the splitting method reads
\begin{equation}\label{eq:ExpSplit}
U_{n} = \underbrace{\left[\prod_{j=1}^{m} \exp(-\mathrm{i}a_j\gamma\tau A) \exp(-\mathrm{i}b_j\tau M)\right]}_{ :=~ S(\tau)} U_{n-1}, \quad n\geq1,
\end{equation}
where $a_j$ and $b_j$ are some suitably chosen coefficients to ensure that the method achieves some order $p$, i.e., $S(\tau)=\exp(-\mathrm{i}(\gamma{A}+M)\tau)+\mathcal{O}(\tau^{p+1})$. We list below three symmetric splitting methods which achieve orders two, four and six, respectively:
\begin{itemize}
\item SM1:
\begin{equation}\label{eq:SM1}
a_1= a_2 = \frac{1}{2},~~~ b_1=1,~ b_2=0.
\end{equation}
This method is known as the Strang splitting \cite{strang1968}.
\item SM2:
\begin{equation}\label{eq:SM2}
a_1 = a_4 = \frac{\kappa_1}{2},~ a_2 = a_3 = \frac{\kappa_0+\kappa_1}{2}, ~~~ b_1=b_3=\kappa_1,~ b_2=\kappa_0,~b_4=0,
\end{equation}
where $\kappa_0=-2^{1/3}/(2-2^{1/3})$ and $\kappa_1=1/(2-2^{1/3})$.

\item SM3:
\begin{align}\label{eq:SM3}
a_1 &= a_8 = \frac{w_3}{2}, ~a_2=a_7=\frac{w_2+w_3}{2},~ a_3 = a_6 = \frac{w_1+w_2}{2},~a_4=a_5 = \frac{w_0+w_1}{2}, \notag\\
b_1 &= b_7 = w_3,~b_2=b_6=w_2,~b_3 = b_5 = w_1,~b_4=w_0,~b_8=0,
\end{align}
where $w_1=-1.17767998417887$, $w_2=0.235573213359$, $w_3=0.784513610477$ and $w_0 = 1-2(w_1+w_2+w_3)$.
\end{itemize}
For the derivation of the coefficients of these splitting methods, we refer to \cite{Yoshida1990} for more details.

\begin{figure}
\centering
\includegraphics[height=5cm,width=7cm]{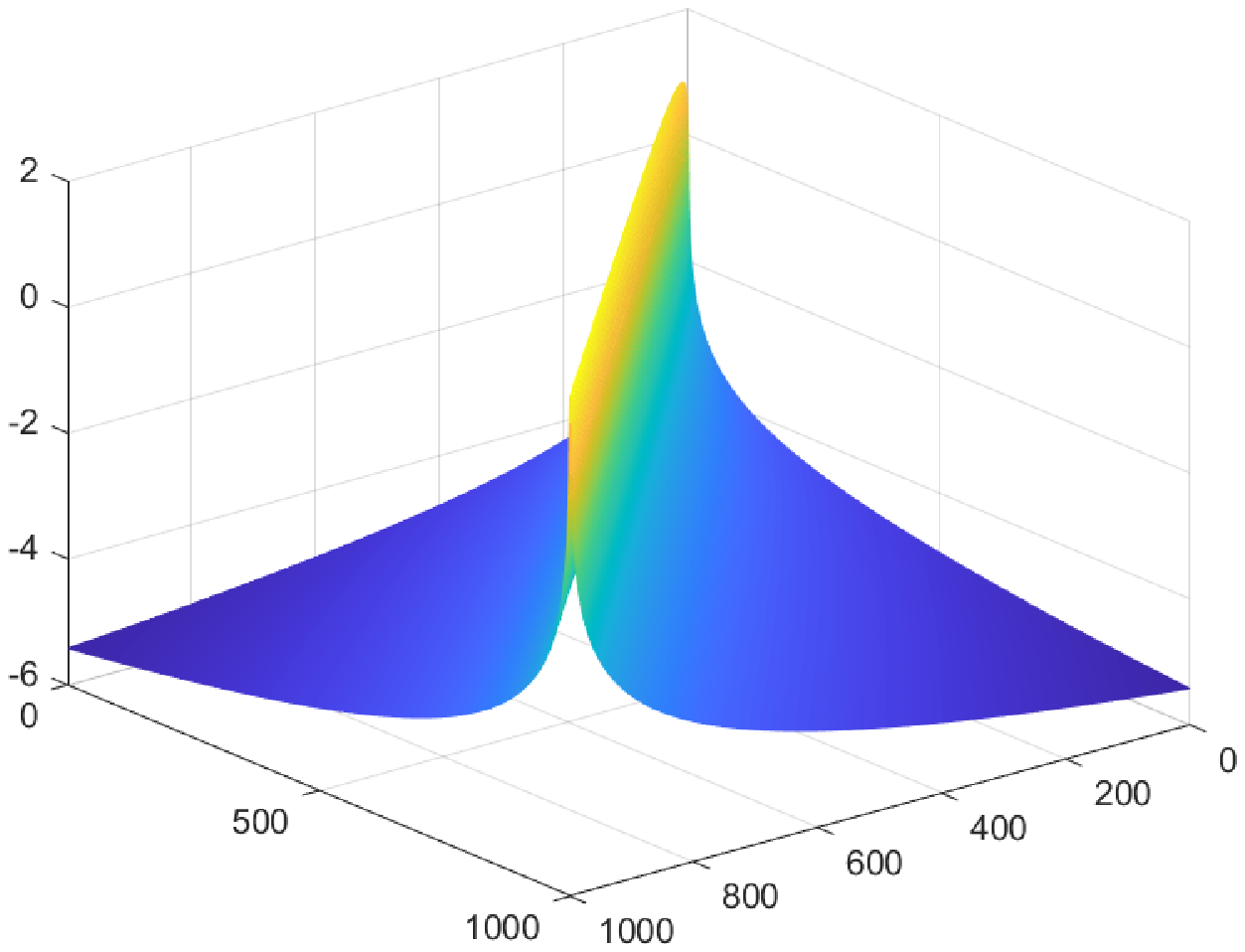}
\includegraphics[height=5cm,width=7cm]{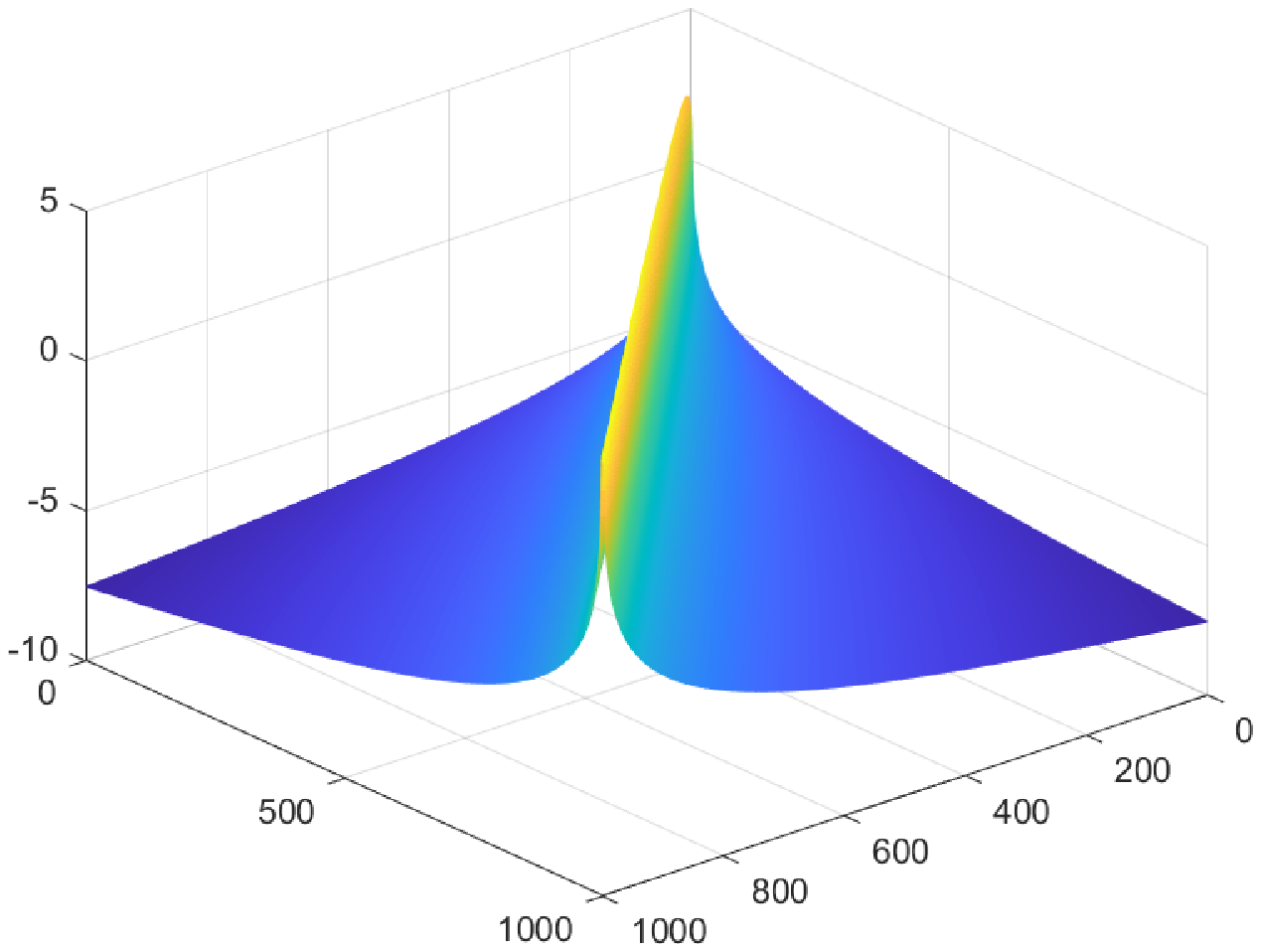}
\caption{The logarithm of the magnitude of the elements of $C$ for $\alpha=0.6$ (left) and $\alpha=1.4$ (right).}
\label{fig:C}
\end{figure}

\begin{remark}
Since both $A$ and $M$ are Hermitian matrices, those matrix exponentials in \eqref{eq:ExpSplit} are all unitary and thus unitary evolution and unconditional stability of the method are guaranteed.
\end{remark}

When implementing splitting methods, it is necessary to evaluate matrix exponential of the forms
$\exp(-\mathrm{i}\lambda{A})$ and $\exp(-\mathrm{i}\lambda M)$, where $\lambda>0$ is a constant. Moreover, from \eqref{eq:A2} we obtain that
\begin{equation}
\exp\left(-\mathrm{i}\lambda{A}\right) = \left[
      \begin{array}{cc}
        P \exp\left(-\mathrm{i}\lambda{C}\right) P &   \\
          & \exp\left(-\mathrm{i}\lambda{C}\right)  \\
      \end{array}
    \right],
\end{equation}
and thus the computation of $\exp(-\mathrm{i}\lambda{A})$ can be reduced to the computation of $\exp(-\mathrm{i}\lambda{C})$. We now consider the structure of the matrices $C$ and $M$. In the case $\alpha\neq1$, from \eqref{eq:Beta} we deduce that $\beta_{\ell,n}^{(\alpha)}=\mathcal{O}(n^{-\alpha-1})$ for fixed $\ell$ and $n\gg1$ and therefore the magnitude of the elements of $C$ decays algebraically away from the diagonal; see Figure \ref{fig:C}. In the case $\alpha=1$, from Lemma \ref{thm:MatA} we know that $C$ is a Hermitian and tridiagonal matrix. Moreover, from \eqref{def:M} we know that $M$ is also a Hermitian matrix and from \eqref{def:mu} it is easily seen that ${\mu_k}$ decays rapidly whenever $V\in{H^{s}(\mathbb{R})}$ for sufficiently large $s\in\mathbb{N}$, and thus we can expect that $M$ is near a banded matrix; see Figure \ref{fig:decaypro}. Now we turn to the computation of $\exp(-\mathrm{i}\lambda{C})$ and $\exp(-\mathrm{i}\lambda{M})$. For this, we use the recently developed algorithm in \cite{Bader2022}, which is based on Chebyshev approximation of complex exponentials. We only consider the computation of $\exp(-\mathrm{i}\lambda{C})$ since the computation of $\exp(-\mathrm{i}\lambda{M})$ is similar. Specifically, if the eigenvalues of $C$ are contained in the interval $[\zeta,\eta]\subset\mathbb{R}$ and $\lambda(\eta-\zeta)/2\leq2.212$, the algorithm in \cite{Bader2022} reads
\begin{equation}\label{eq:ChebH}
\exp(-\mathrm{i}\lambda{C}) \approx \exp\left(-\mathrm{i}\lambda\frac{\eta+\zeta}{2}\right) \left[ c_0 I_N + 2\sum_{k=1}^{m} c_k T_k\left( \frac{2}{\eta-\zeta}\left(C - \frac{\zeta+\eta}{2} I_N  \right) \right) \right],
\end{equation}
where $I_{N}$ is the identity matrix of order $N$ and $c_k = (-\mathrm{i})^kJ_k(\lambda(\eta-\zeta)/2)$ and $J_k(x)$ is the Bessel function of the first kind of order $k$. When choosing $m=18$, the approximation error in \eqref{eq:ChebH} will be less than the machine precision (i.e., $2^{-53}\approx1.11\times10^{-16}$) and the calculation of the right-hand side of \eqref{eq:ChebH} can be achieved with only five matrix-matrix products. Otherwise, if $\lambda(\eta-\zeta)/2>2.212$, then the scaling and squaring technique, i.e., $\exp(-\mathrm{i}\lambda{C})=(\exp(-\mathrm{i}\lambda{C}/2^s))^{2^s}$ for some $s\in\mathbb{N}$, should be used such that the exponential $\exp(-\mathrm{i}\lambda{C}/2^s)$ can be evaluated by \eqref{eq:ChebH}. Note that the values of $\zeta$ and $\eta$ have to be specified before embarking on the algorithm. If they cannot be specified in advance, then the algorithm will simply take $\eta=-\zeta=\|C\|_1$.

\begin{figure}
\centering
\includegraphics[height=5.0cm,width=7cm]{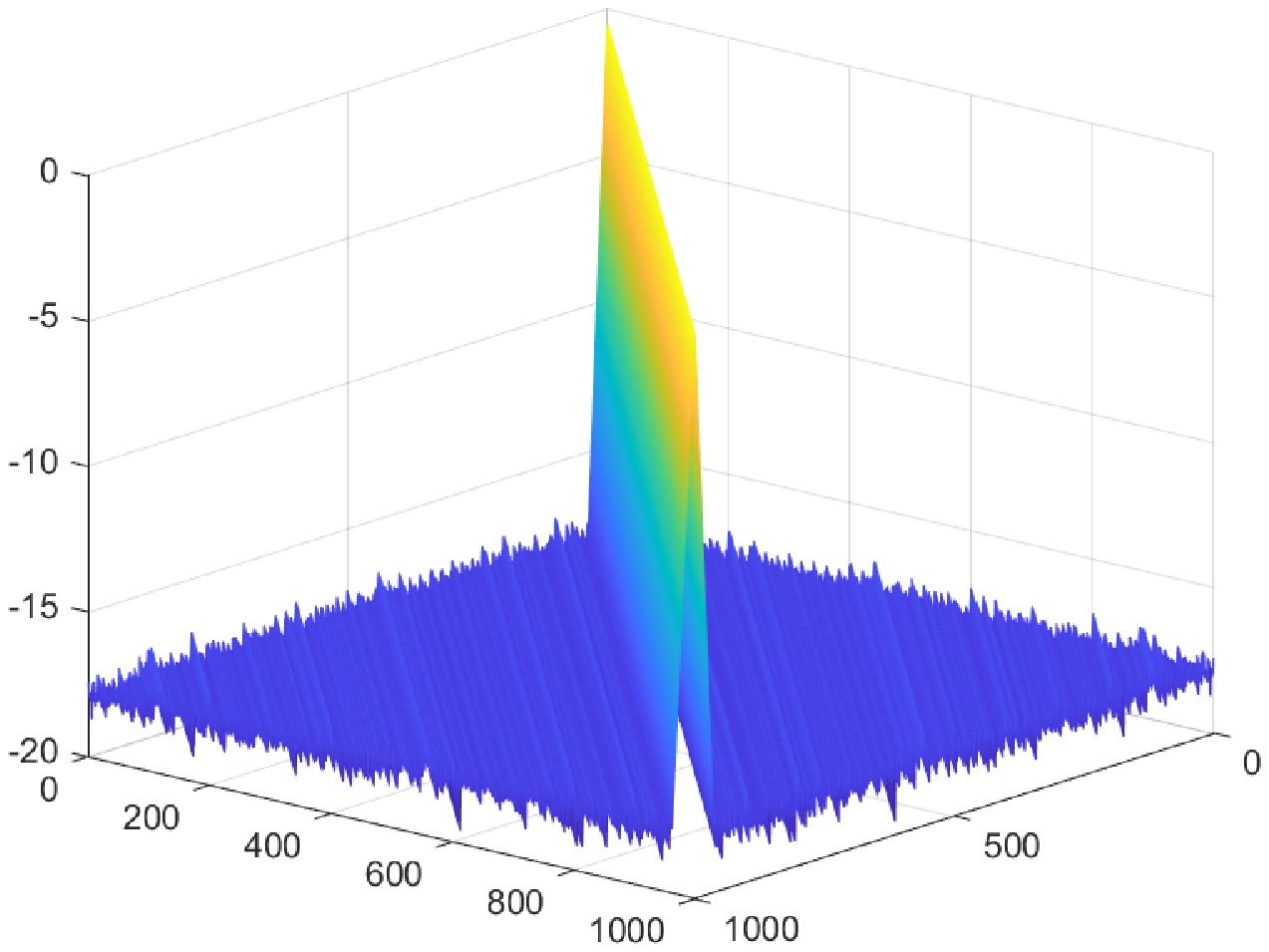}
\includegraphics[height=5.0cm,width=7cm]{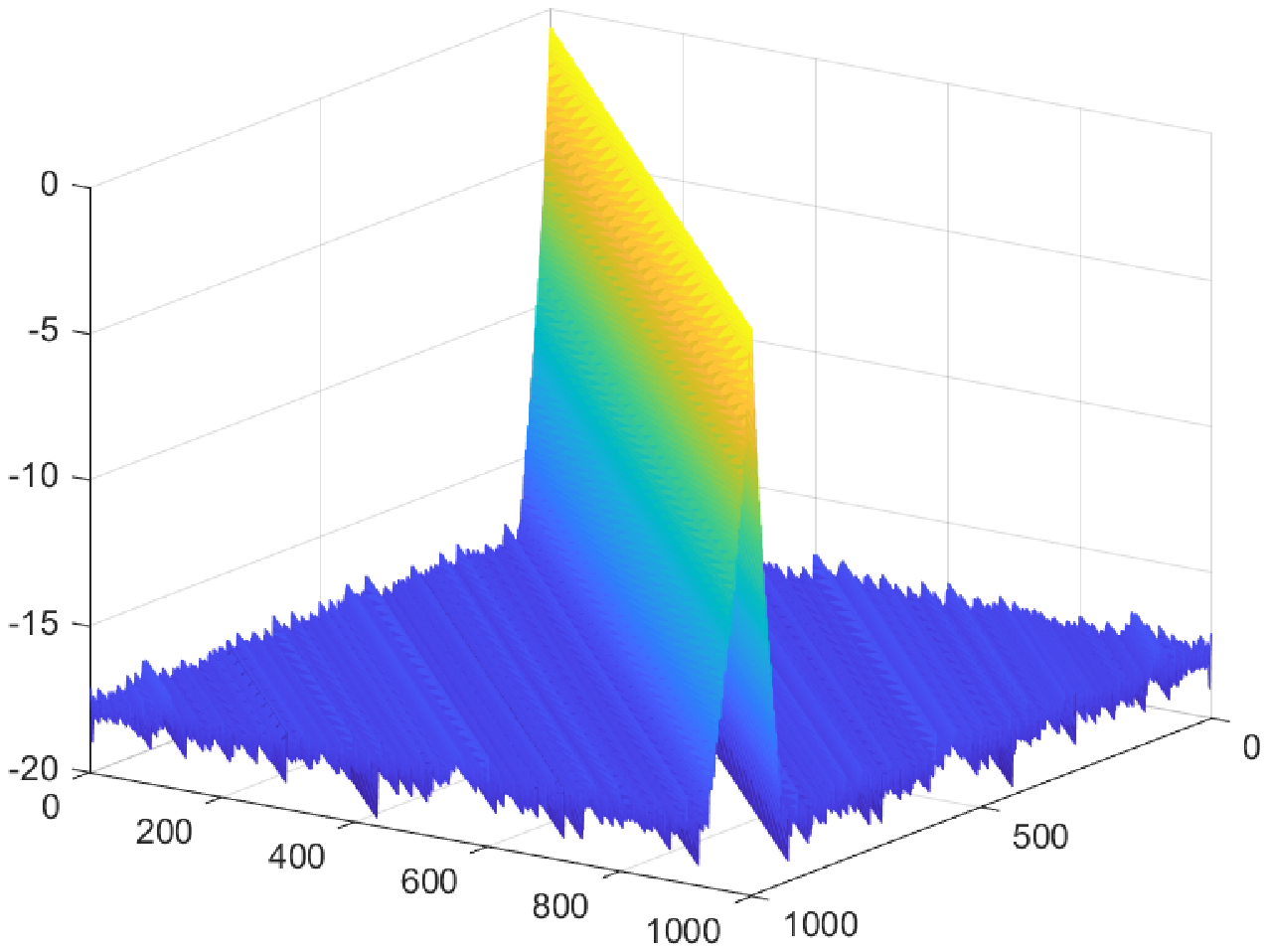}
\caption{The logarithm of the magnitude of the elements of $M$ for $V(x)=1/(1+x^2)$ (left) and $V(x)=\exp(-x^2)$ (right).}
\label{fig:decaypro}
\end{figure}

\begin{remark}
It is possible to improve the efficiency of the algorithm \eqref{eq:ChebH} by taking the structure of $C$ and $M$ into account. For example, we can simply set the elements of the matrices involved to zero whenever their magnitude is less than the machine precision.
\end{remark}

\subsection{The nonlinear case}
In the case where $\mathcal{T}$ is a nonlinear operator, using the variation-of-constant formula to \eqref{eq:LS}, we have
\begin{equation}\label{eq:Nonlinear}
U(t_{n+1}) = e^{-\mathrm{i}\gamma\tau{A}} U(t_n) - \mathrm{i}\tau \int_{0}^{1} e^{-\mathrm{i}\gamma\tau{A}(1-y)}  \mathcal{N}(U(t_{n}+y\tau),t_{n}+y\tau) \mathrm{d}y.
\end{equation}
To approximate \eqref{eq:Nonlinear}, the fourth-order exponential time differencing Runge-Kutta (ETDRK4) method and its various modifications are preferable (see, e.g., \cite{Bhatt2016,Cox2002,Kassam2005,Krogstad2005}). Here we utilize the Krogstad-P22 scheme developed in \cite{Bhatt2016}. More specifically, let $L=\mathrm{i}\gamma{A}$ and let $R_{2,2}(\tau{L}) = (12I-6\tau{L}+\tau^2{L}^2)(12{I}+6\tau{L}+\tau^2{L}^2)^{-1}$, where we have omitted the subscript $2N$ on the identity matrix $I$ for notational simplicity. Moreover, we define
\begin{align}
P_1(\tau{L}) &= 12\tau(12{I}+6\tau{L}+\tau^2{L}^2)^{-1} \notag , \\
P_2(\tau{L}) &= \tau(6{I}+\tau{L})(12{I}+6\tau{L}+\tau^2{L}^2)^{-1},  \notag \\
P_3(\tau{L}) &= 2\tau(4{I}+\tau{L})(12{I}+6\tau{L}+\tau^2{L}^2)^{-1}. \notag
\end{align}
Then, the Krogstad-P22 scheme reads
\begin{align}\label{eq:ETDRKPade}
U_{n+1} &= R_{2,2}(\tau{L})U_n - \mathrm{i} {P}_1(\tau{L}) \mathcal{N}(U_n,t_n) - \mathrm{i} {P}_2(\tau{L}) \bigg[-3\mathcal{N}(U_n,t_n) + 2\mathcal{N}\left(a_n,t_n+\frac{\tau}{2}\right)  \nonumber \\
&~~~  + 2\mathcal{N}\left(b_n,t_n+\frac{\tau}{2}\right) - \mathcal{N}(c_n,t_n+\tau) \bigg] - \mathrm{i} P_3(\tau{L}) \bigg[\mathcal{N}(U_n,t_n)   \\
&~~~   - \mathcal{N}\left(a_n,t_n+\frac{\tau}{2}\right) - \mathcal{N}\left(b_n,t_n+\frac{\tau}{2}\right) + \mathcal{N}(c_n,t_n+\tau) \bigg], \notag
\end{align}
where
\begin{align}
a_n &= \tilde{R}_{2,2}(\tau{L})U_n - \mathrm{i}\tilde{P}_1(\tau{L}) \mathcal{N}(U_n,t_n)        \notag, \\
b_n &= \tilde{R}_{2,2}(\tau{L})U_n - \mathrm{i}\tilde{P}_1(\tau{L}) \mathcal{N}(U_n,t_n) - \mathrm{i}\tilde{P}_2(\tau{L}) \left[\mathcal{N}\left(a_n,t_n+\frac{\tau}{2}\right) - \mathcal{N}(U_n,t_n) \right],  \notag \\
c_n &= R_{2,2}(\tau{L})U_n -\mathrm{i}P_1(\tau{L}) \mathcal{N}(U_n,t_n) - 2\mathrm{i}P_2(\tau{L}) \left[ \mathcal{N}\left(b_n,t_n+\frac{\tau}{2}\right) - \mathcal{N}(U_n,t_n) \right],  \notag
\end{align}
and
\begin{align}
\tilde{R}_{2,2}(\tau{L}) &= (48{I}-12\tau{L}+\tau^2{L}^2) (48{I}+12\tau{L}+\tau^2{L}^2)^{-1}, \notag  \\
\tilde{P}_1(\tau{L}) &= 24\tau(48{I}+12\tau{L}+\tau^2{L}^2)^{-1},  \notag  \\
\tilde{P}_2(\tau{L}) &= 2\tau(12{I}+\tau{L})(48{I}+12\tau{L}+\tau^2{L}^2)^{-1}. \notag
\end{align}

\begin{remark}
The main drawback of the ETDRK4 and ETDRK4-B schemes \cite{Cox2002,Krogstad2005} is the computation of the following matrix functions
\begin{equation}
\varphi_0(\tau{L}) = \exp(-\tau{L}), \quad  \varphi_k(\tau{L}) = (-\tau L)^{-k} \left( \varphi_0(\tau{L}) - \sum_{j=0}^{k-1} \frac{(-\tau{L})^j}{j!} \right), \quad k=1,2,3, \notag
\end{equation}
which suffers from cancellation errors, especially when the eigenvalues of $L$ are very close to zero. Kassam and Trefethen in \cite{Kassam2005} proposed to evaluate these functions by using complex contour integrals. However, the choice of the contour is problem dependent. The Krogstad-P22 scheme \cite{Bhatt2016} is developed by using Pad\'{e} approximations to the above functions, which avoids direct computation of the higher powers of matrix inverse. Moreover, as observed in \cite{Bhatt2016}, the factors ${L}^{-1}$ and ${L}^{-3}$ that appear in ETDRK4 and  ETDRK4-B schemes cancel out in the Krogstad-P22 scheme.
\end{remark}

The stability of the Krogstad-P22 scheme can be analyzed by using a similar argument as for the ETDRK4 scheme in \cite{Cox2002}. For the nonlinear and autonomous ODE of the form $U_t = -\lambda{U} - \mathrm{i} \mathcal{N}(U)$. Suppose that there exists a fixed point $U_0$ such that $\lambda{U_0}+\mathrm{i}\mathcal{N}(U_0)=0$. Linearizing about this fixed point yields
\begin{equation}\label{nonlinearODE2}
U_t = -\lambda{U}-\mathrm{i}cU,
\end{equation}
where $U$ is now the perturbation to $U_0$ and $c=\mathcal{N}'(U_0)$. Applying the Krogstad-P22 scheme \eqref{eq:ETDRKPade} to the linearized equation \eqref{nonlinearODE2} and setting $r=U_{n+1}/U_n$, $x=-\mathrm{i}c\tau$ and $y=-\lambda\tau$, we then obtain
\begin{equation}\label{eq:afactor}
r(x,y) = c_0+c_1x+c_2x^2+c_3x^3+c_4x^4,
\end{equation}
where
\begin{align*}
c_0 & = \frac{12+6y+y^2}{12-6y+y^2}, \\
c_1 & = \frac{144y^3 - 432y^2 - 1728y + 6912}{{(12-6y+y^2)}^2{(48-12y+y^2)}}, \\
c_2 & = \frac{36y^5 - 648y^4 + 4032y^3 + 3456y^2 - 82944y + 165888}{{(12-6y+y^2)}^2{(48-12y+y^2)}^2}, \\
c_3 & = \frac{- 48y^4 + 768y^3 + 576y^2 - 27648y + 55296}{{(12-6y+y^2)}^2{(48-12y+y^2)}^2}, \\
c_4 & = \frac{- 96y^3 + 1920y^2 - 10368y + 13824}{{(12-6y+y^2)}^2{(48-12y+y^2)}^2}.
\end{align*}
Finally, the stability region of the Krogstad-P22 scheme can be obtained by requiring $|r(x,y)|\leq1$. Since all eigenvalues of ${L}$ are pure imaginary, it is therefore enough to consider the case where $\lambda$ is a pure imaginary number. In Figure \ref{stabilityregion} we plot the stability regions for several pure imaginary numbers of $y$ for \eqref{eq:LS}. We observe that each of the stability region includes an interval of the imaginary axis and its length increases as $|y|$ increases. This gives an indication of stability of the Krogstad-P22 scheme.

\begin{figure}
\centering
\includegraphics[height=7.0cm,width=9cm]{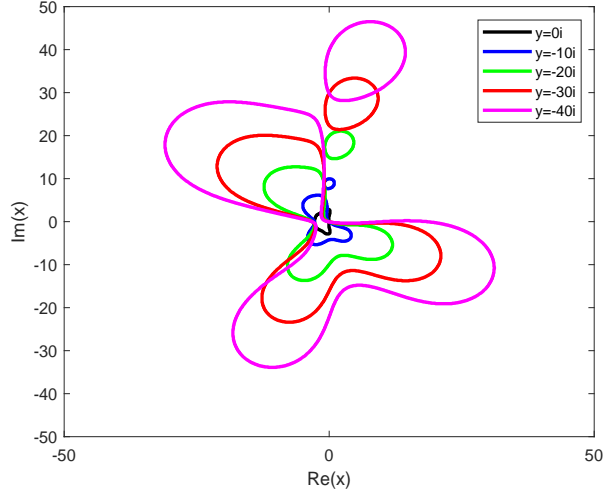}
\caption{Stability regions of the Krogstad-P22 scheme for several values of $y$.}
\label{stabilityregion}
\end{figure}

\section{Numerical examples}\label{sect:Exam}
In this section, we present two examples to show the performance of the proposed method.

\vspace{.2cm}

\noindent{\bf Example 1}.
Consider the following linear fractional Schr\"odinger equation
\begin{equation}\label{eq:FSEpotential}
\mathrm{i}\partial_t\psi(x,t) = \gamma(-\Delta)^{\alpha/2}\psi(x,t) + V(x)\psi(x,t),
\end{equation}
where we take $\gamma=1/2$ and $V(x)=1/(1+x^2)$. We first compare the performance of spatial discretizations using MTFs and MCFs, respectively. To avoid the influence of the error due to temporal discretization, we consider a pure version of spectral methods, that is, we evaluate $U(t)$ by the exact formula $U(t)=\exp(-\mathrm{i}(\gamma{A}+M)t)U(0)$ and compute the involved matrix exponential by the \texttt{expm} function in {\sc Matlab}. Since the exact solution of \eqref{eq:FSEpotential} is not known, we define a reference solution which is computed by the MTF spectral Galerkin method with $N=500$ (note that the number of terms of this spectral method is $2N$). Moreover, we take the scaling parameter $\nu=4$ for both methods.

In the first row of Figure \ref{fsevsech} we plot the maximum error of both spectral methods at time $t=1$ for the initial data $\psi_{0}(x)=\mathrm{sech}(x)$. We see that MTF spectral discretization converges much faster than its MCF counterpart in the case of $\alpha=1$ and both spectral discretizations converge almost at the same rate otherwise. Indeed, in the case of $\alpha=1$, MTF spectral discretization scheme converges at exponential rates, while MCF spectral discretization scheme converges only at algebraic rates. In the second row of Figure \ref{fsevsech} we plot the maximum error at time $t=1$ for the initial data $\psi_{0}(x)=(\mathrm{i}x+10)/(x^2+4)$. We observe that MTF spectral discretization converges always faster than its MCF counterpart. Moreover, similar to the previous case, our MTF spectral discretization converges at exponential rates in the particular case of $\alpha=1$ while its MCF counterpart converges only at algebraic rates.

\begin{figure}
\centering
\includegraphics[height=5.5cm,width=7cm]{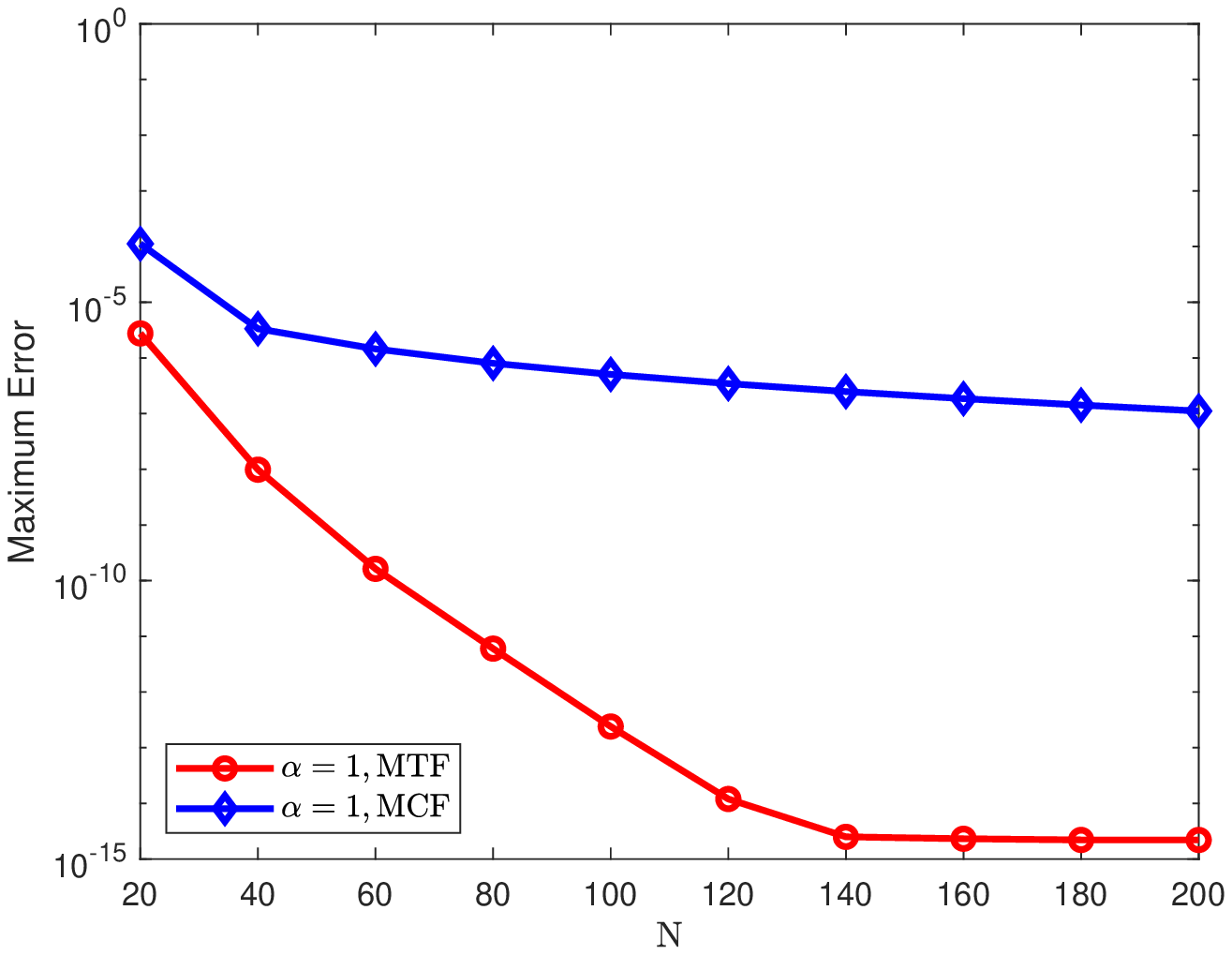}
\includegraphics[height=5.5cm,width=7cm]{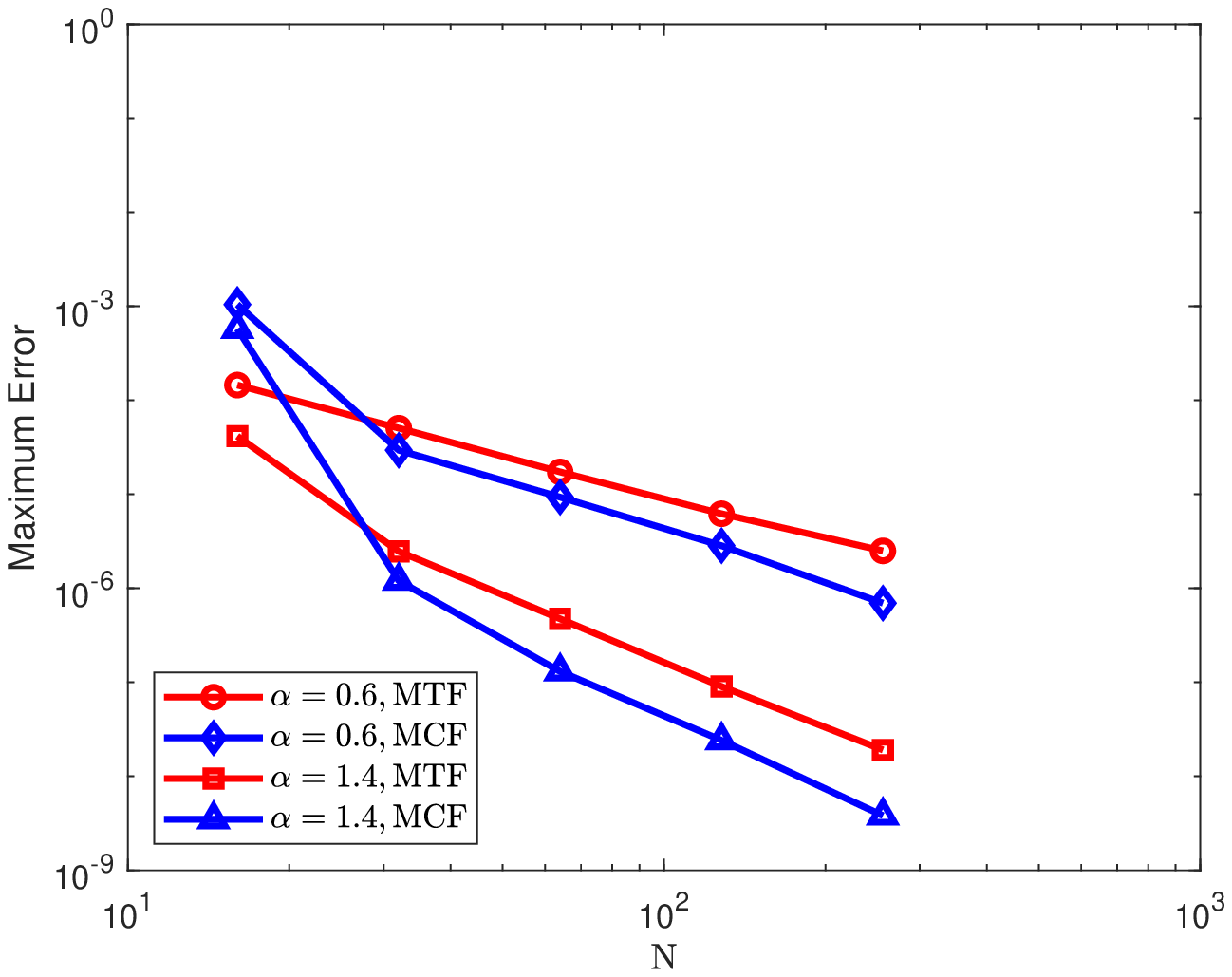}\\
\includegraphics[height=5.5cm,width=7cm]{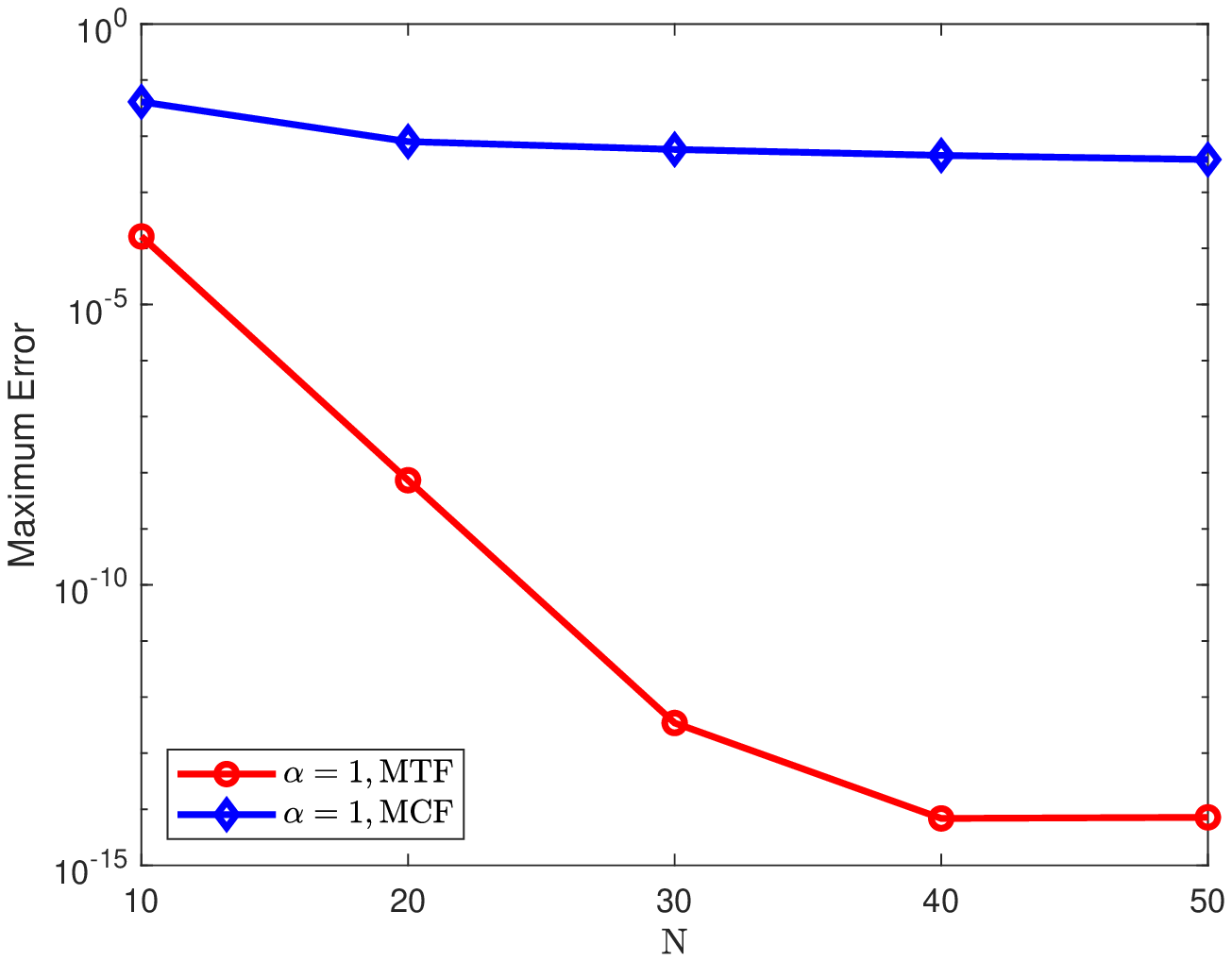}
\includegraphics[height=5.5cm,width=7cm]{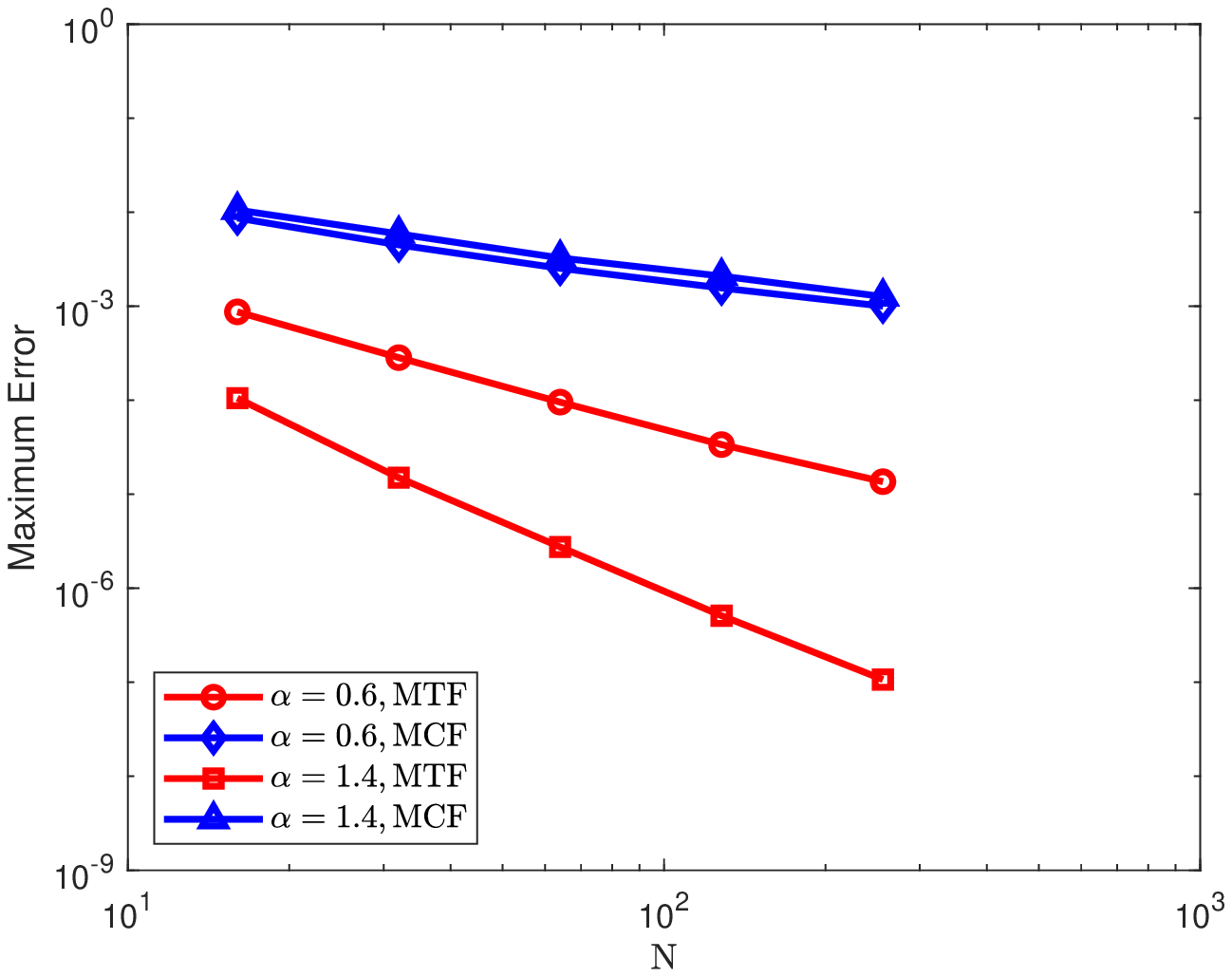}
\caption{Maximum errors of MTF and MCF spectral discretizations for $\alpha=1$ (left), $\alpha=0.6,1.4$ (right). Here $\psi_{0}(x)=\mathrm{sech}(x)$ (top row) and $\psi_{0}(x)=(\mathrm{i}x+10)/(x^2+4)$ (bottom row).}\label{fsevsech}
\end{figure}

\begin{figure}
\centering
\includegraphics[height=4.5cm,width=4.5cm]{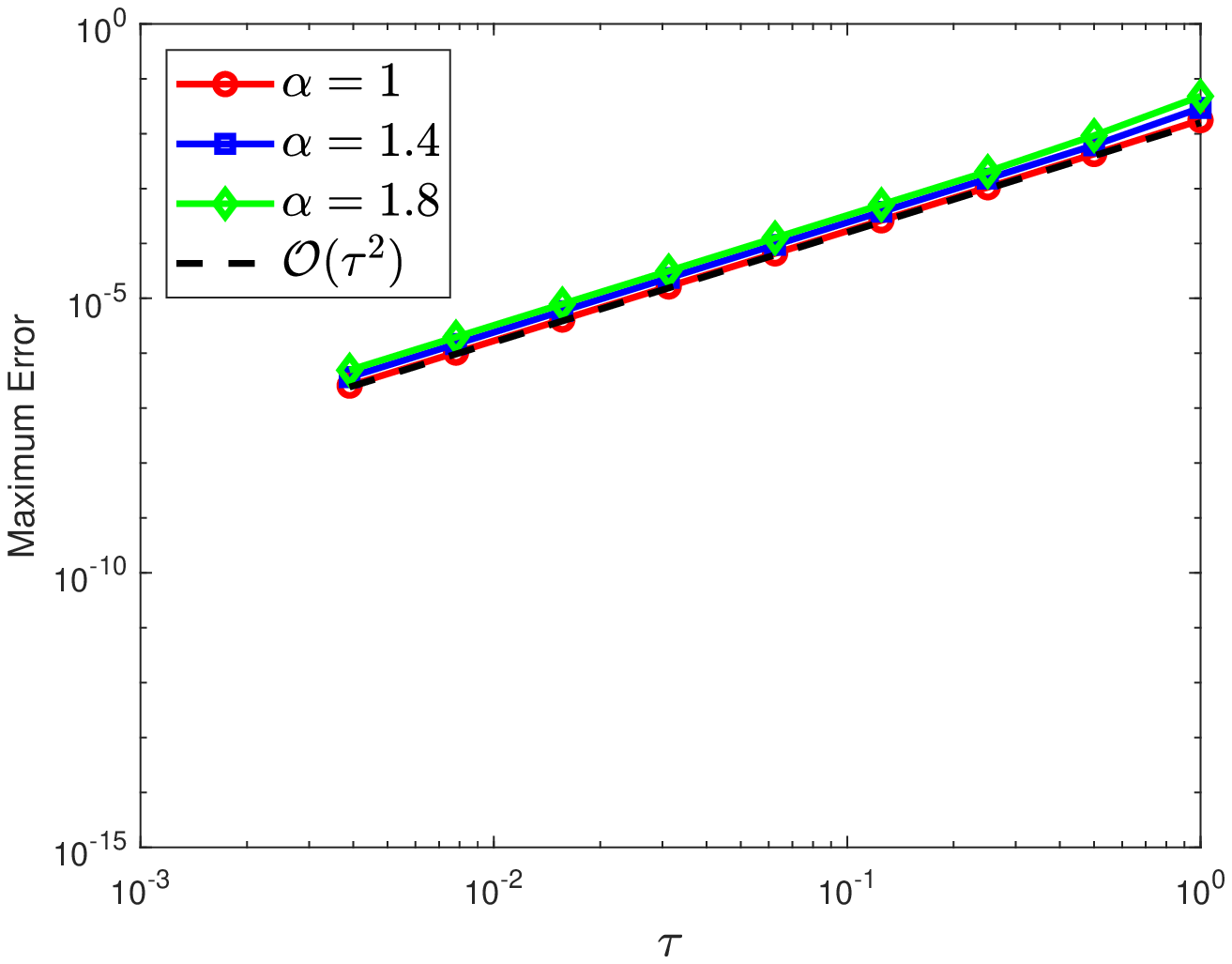}
\includegraphics[height=4.5cm,width=4.5cm]{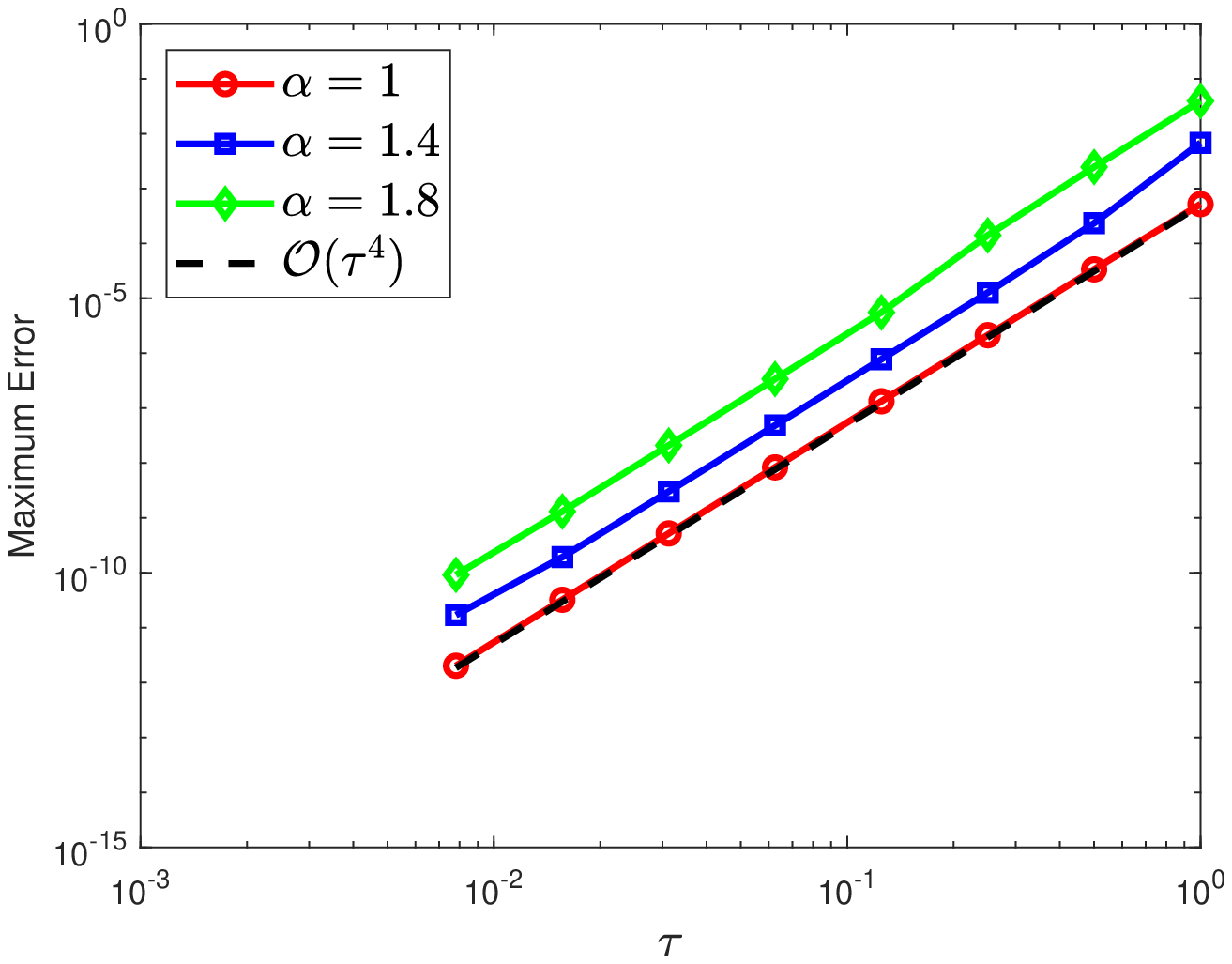}
\includegraphics[height=4.5cm,width=4.5cm]{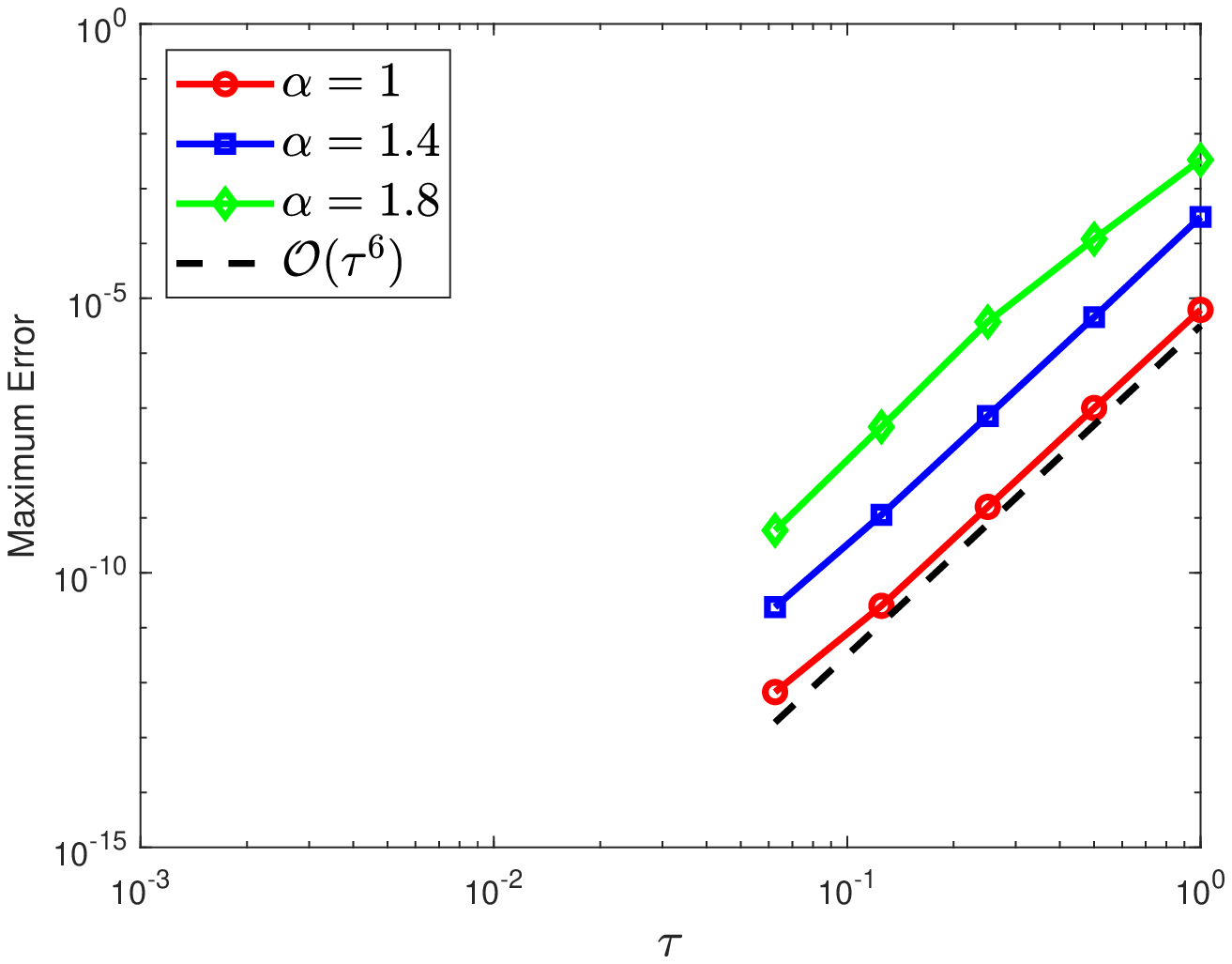}
\caption{Temporal orders of splitting methods SM1 (left), SM2 (middle) and SM3 (right) coupled with MTF spectral Galerkin method for \eqref{eq:FSEpotential}.}
\label{timeorder}
\end{figure}

Next, we consider the temporal order of convergence of the MTF spectral Galerkin method coupled with the splitting methods. In this case, the reference solution is computed with the MTF spectral Galerkin method coupled with the splitting method \eqref{eq:SM3} with $\tau=2^{-11}$ and $N=500$. We consider the maximum error of the numerical solution at time $t=1$ for the initial data $\psi_{0}(x)=1/(1+x+x^2)$. In Figure \ref{timeorder} we plot the maximum error of MTF spectral Galerkin method coupled with the splitting methods \eqref{eq:SM1}, \eqref{eq:SM2} and \eqref{eq:SM3} as a function of the time step size $\tau$. Clearly, we see that the classical order of each splitting scheme is retained.

Finally, we consider the evolution of $|\psi(x,t)|$ in \eqref{eq:FSEpotential} with a double-barrier potential of the form $V(x)=100(\exp(-(x-10)^2)+\exp(-(x+10)^2))$. The initial data is taken as $\psi_{0}(x)=\exp(-x^2)\exp(-\mathrm{i}\kappa x)$, where $\kappa\geq0$. This equation was studied in \cite{Huang2017} in the context of beam propagation and the variables $x$ and $t$ denote the normalized transverse and longitudinal coordinates, respectively. In our simulations, we use the MTF spectral Galerkin method with $N=500$ and the scaling parameter $\nu=4$ in space, combined with the splitting method SM3 with $\tau=0.001$ in time. The top row of Figure \ref{fig:Beam} shows the evolution of $|\psi(x,t)|$ for $\kappa=0$. We see that $|\psi(x,t)|$ splits and is diffraction-free in the case of $\alpha=1$ and splits and diffracts in the case of $\alpha\in(1,2)$. Moreover, we also see that the diffraction of the beam becomes stronger as $\alpha$ increases. The bottom row of Figure \ref{fig:Beam} shows the evolution of $|\psi(x,t)|$ for $\kappa=10$, which indicates that the initial data can be viewed as an oblique Gaussian beam. We see that $|\psi(x,t)|$ exhibits diffraction-free propagation in the case of $\alpha=1$ and exhibits diffraction in the case of $\alpha\in(1,2)$. Moreover, similar to the previous case, the diffraction of the beam becomes stronger as $\alpha$ increases. Our results are consistent with the observations in \cite{Huang2017}.

\begin{figure}
\centering
\includegraphics[height=4.5cm,width=4.6cm]{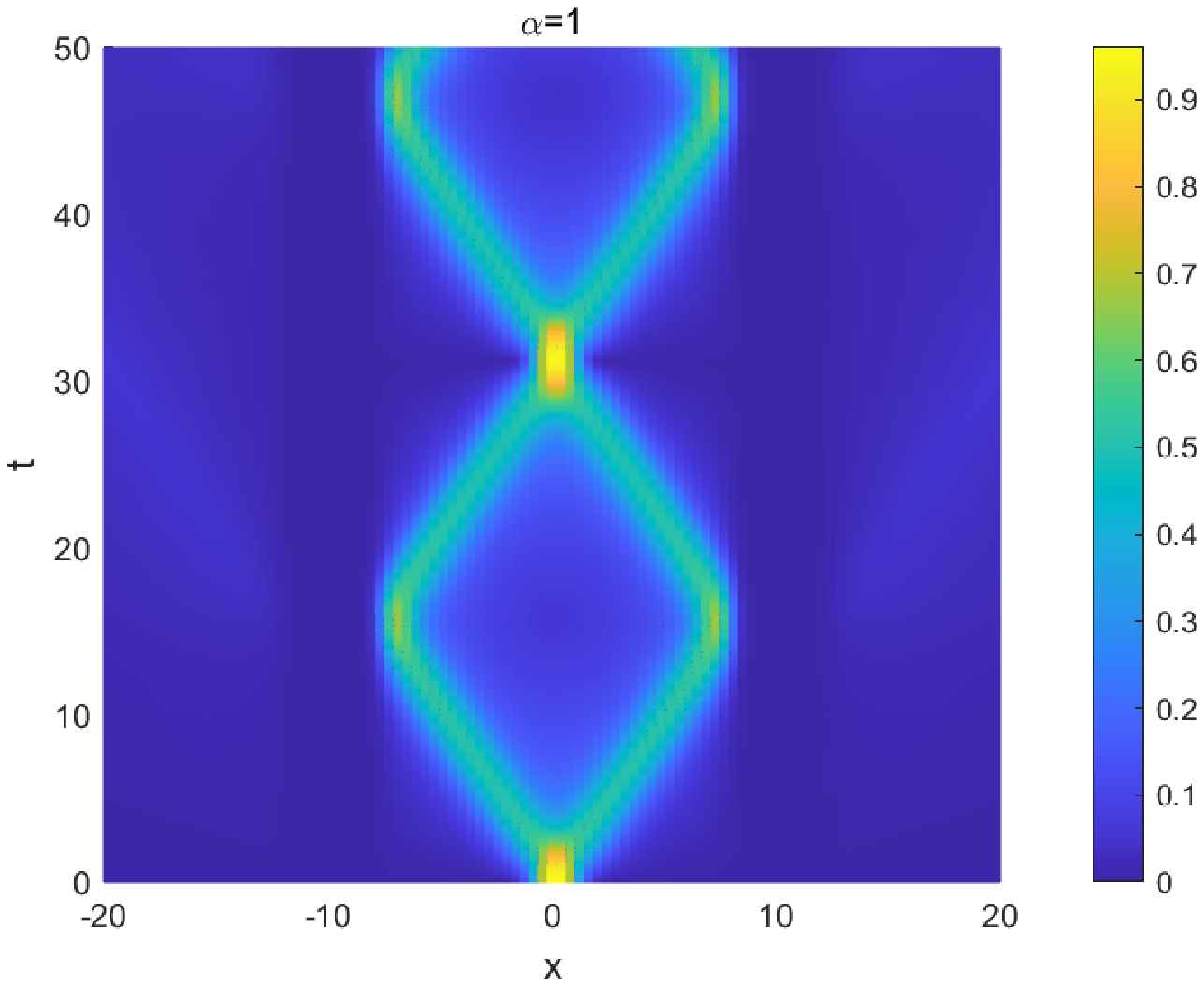}
\includegraphics[height=4.5cm,width=4.6cm]{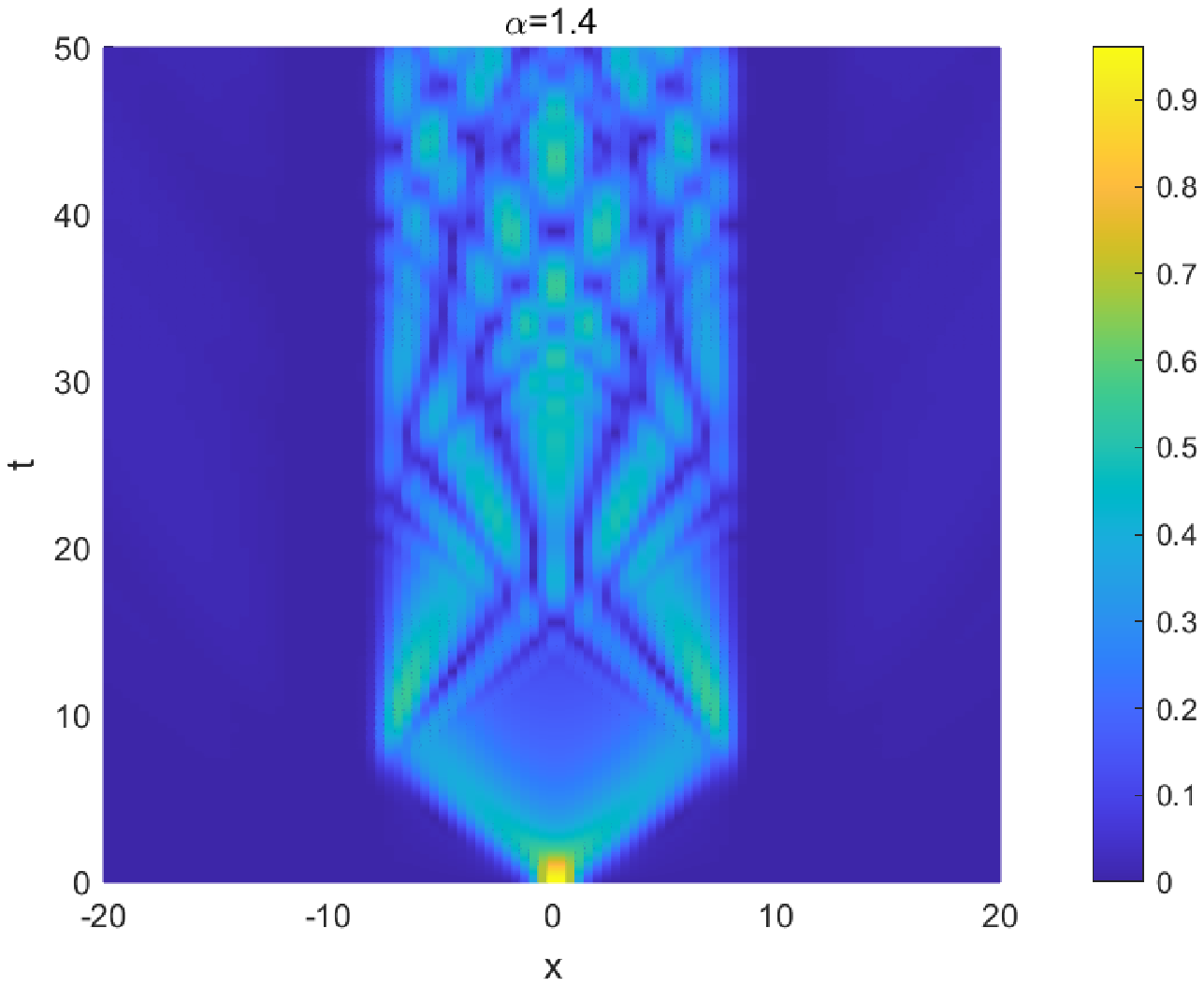}
\includegraphics[height=4.5cm,width=4.6cm]{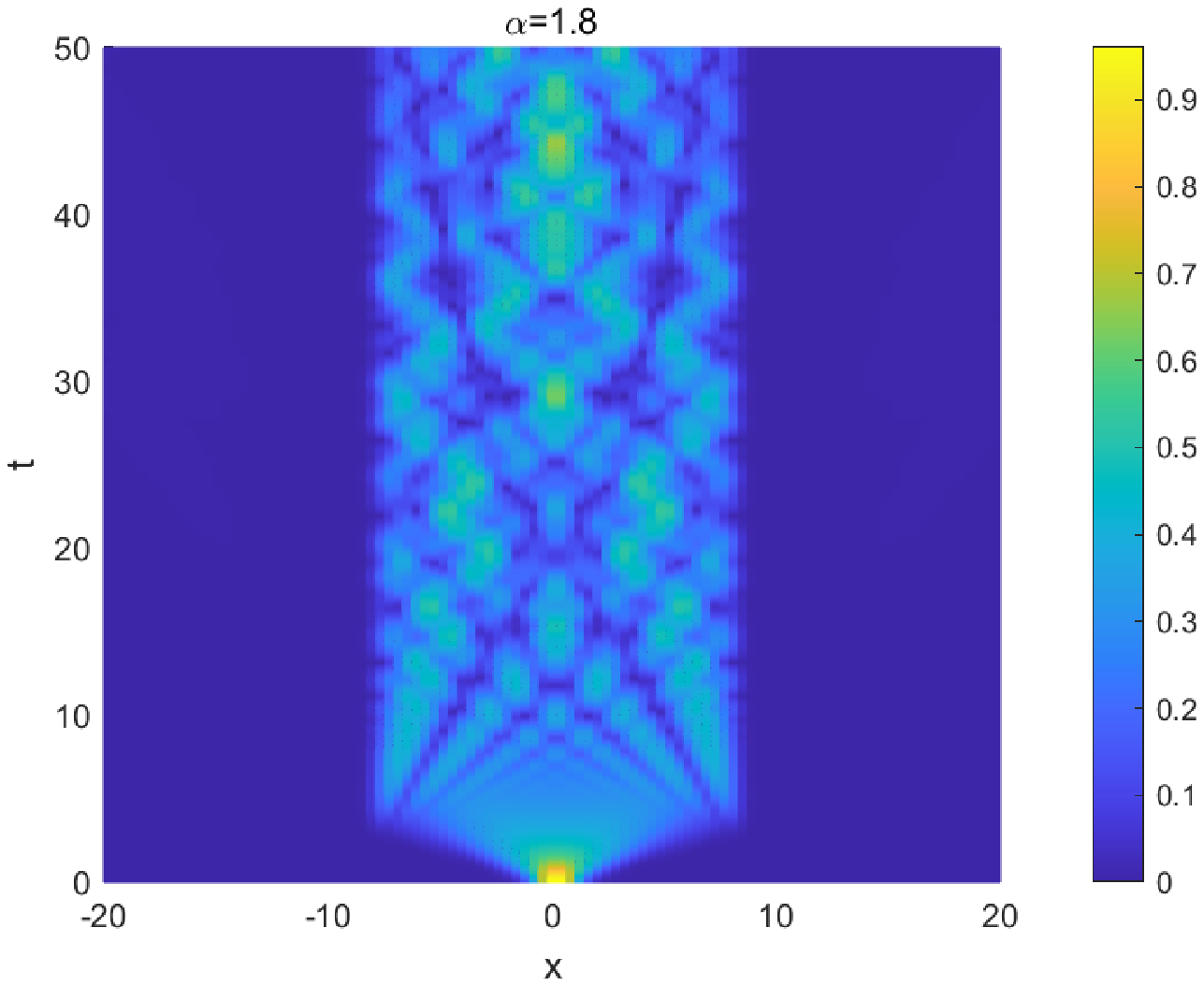} \\
\includegraphics[height=4.5cm,width=4.6cm]{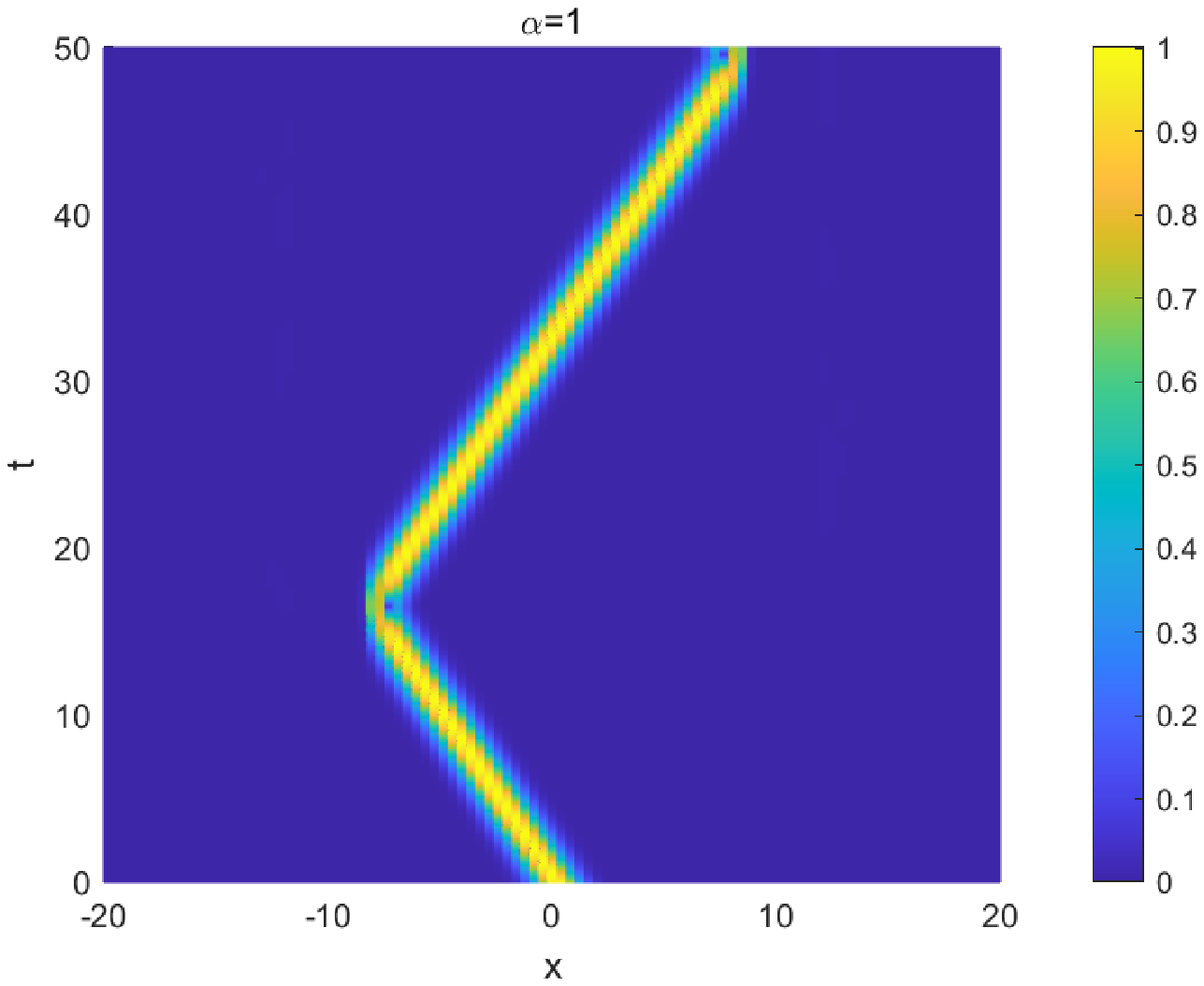}
\includegraphics[height=4.5cm,width=4.6cm]{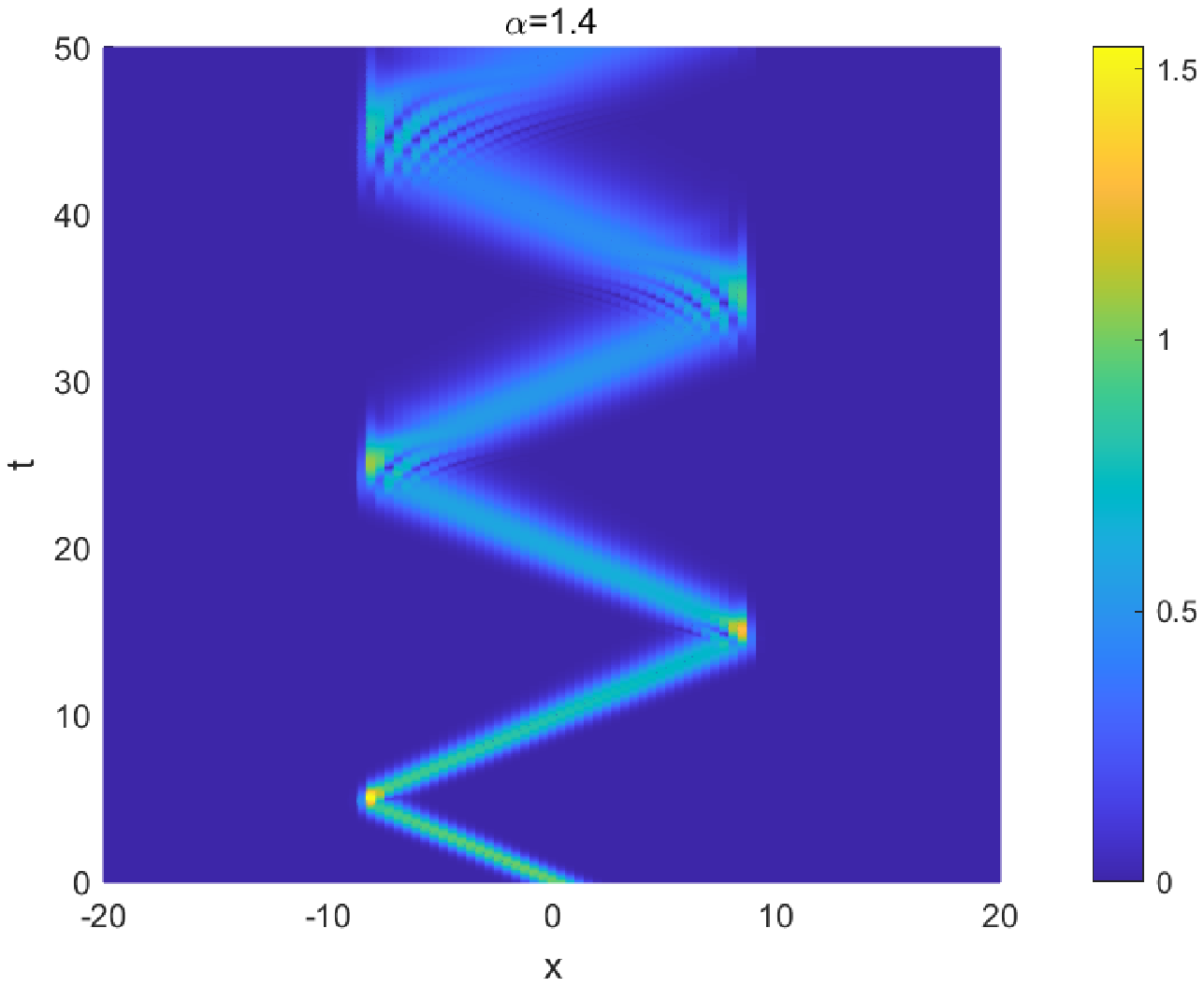}
\includegraphics[height=4.5cm,width=4.6cm]{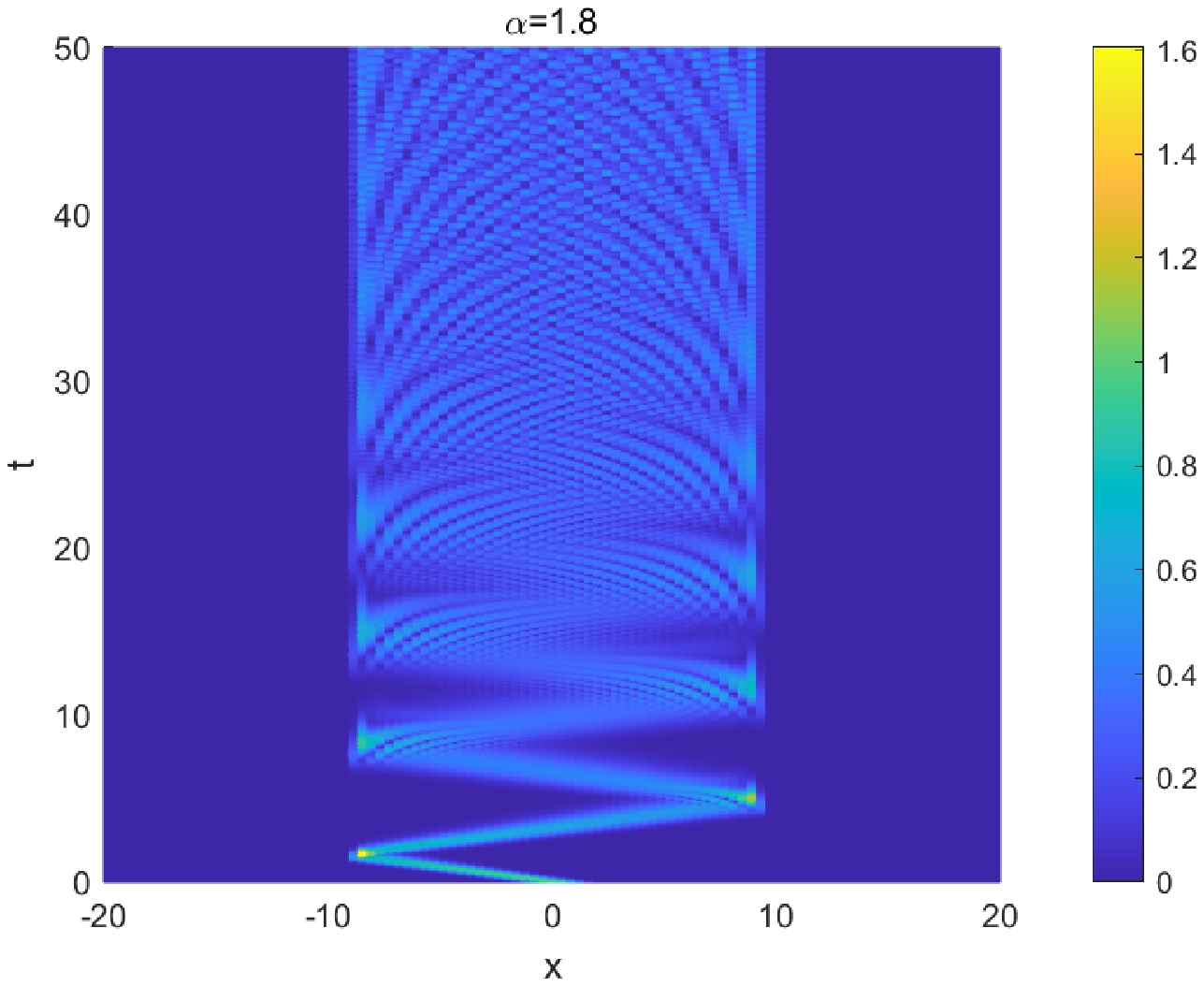}
\caption{Time evolution of $|\psi(x,t)|$ for the initial data $\psi_{0}(x)=\exp(-x^2)\exp(-\mathrm{i}\kappa x)$. Here $\kappa=0$ (top row) and $\kappa=10$ (bottom row).}
\label{fig:Beam}
\end{figure}

\vspace{.2cm}

\noindent{\bf Example 2}.
Let us consider the following focusing nonlinear FSE (see, e.g., \cite{Cayama2020ANM,Duo2016,Klein2014})
\begin{equation}\label{eq:nonlinearFSE}
\mathrm{i}\partial_t\psi(x,t) = \gamma (-\Delta)^{\alpha/2}\psi(x,t) - |\psi(x,t)|^{2}\psi(x,t),
\end{equation}
where $\gamma=1/2$. It is known that the solution satisfies the mass conservation, i.e.,
\begin{equation}\label{masstime}
M(t) = \int_{\mathbb{R}}|\psi(x,t)|^2 \mathrm{d}x = M(0).
\end{equation}
We discretize the equation \eqref{eq:nonlinearFSE} using the MTF spectral Galerkin method in space coupled with the Krogstad-P22 scheme \eqref{eq:ETDRKPade} in time. The nonlinear terms in \eqref{eq:ETDRKPade} are computed as follows: We only consider the term $\mathcal{N}(U_n,t_n)$ since the other terms $\mathcal{N}(a_n,t_n+\tau/2)$, $\mathcal{N}(b_n,t_n+\tau/2)$ and $\mathcal{N}(c_n,t_n+\tau)$ can be computed similarly. Firstly, let $\{\varrho_k\}_{k\in\mathbb{Z}}$ be the sequence defined by
\begin{equation}\label{def:varrho}
\varrho_k = \frac{\mathrm{i}^{-k}}{\pi^2} \int_{-\pi}^{\pi} \bigg|\cos\left(\frac{\theta}{2}\right)
\sum_{j=-N}^{N-1}\zeta_j(t_n)\mathrm{i}^je^{\mathrm{i}j\theta} \bigg|^2e^{-\mathrm{i}k\theta} \mathrm{d}\theta.
\end{equation}
From \eqref{def:AB} one can verify that $\mathcal{N}(U_n,t_n)=\mathcal{B}(U_n)U_n$, where $\mathcal{B}(U_n)\in\mathbb{C}^{2N\times2N}$ is defined by
\begin{equation}\label{def:N}
\mathcal{B}(U_n) = \left(
    \begin{array}{cccc}
       \varrho_0 & \varrho_{-1} & \cdots & \varrho_{1-2N} \\
       \varrho_1 & \varrho_0    & \cdots & \varrho_{2-2N} \\
       \vdots & \vdots  & \ddots & \vdots     \\
       \varrho_{2N-1} & \varrho_{2N-2} & \cdots  & \varrho_0
    \end{array}
    \right).
\end{equation}
It is clear that $\mathcal{B}(U_n)$ is a Toeplitz and Hermitian matrix. Furthermore, note that the integrand on the right-hand of \eqref{def:varrho} is periodic in $\theta$ with period $2\pi$, hence the elements of $\mathcal{B}(U_n)$ (i.e., $\{\varrho_k\}_{k=1-2N}^{2N-1}$) can be computed rapidly with the FFT. On the other hand, since $\mathcal{B}(U_n)$ is a Toeplitz matrix, $\mathcal{N}(U_n,t_n)=\mathcal{B}(U_n)U_n$ can also be computed by the FFT \cite{Golub2013}.

We first consider the initial data $\psi_0(x)=\mathrm{sech}(x)$. The top row of Figure \ref{fig:blowup} shows the time evolution of the solution $|\psi(x,t)|^2$ for the initial data $\psi_0(x)=\mathrm{sech}(x)$. Note that the focusing nonlinear equation \eqref{eq:nonlinearFSE} will display the finite time blow-up phenomenon whenever $\alpha\in(1/2,1]$ (see \cite{Klein2014}). In our simulations, we take $N=300,\nu=10,\tau=0.001$. Clearly, the finite time blow-up phenomenon is observed for $\alpha=0.8,1$. We then consider the initial data $\psi_0(x)=\mathrm{exp}(-x^2)$. In this case, the mass can be calculated as
\begin{equation}\label{massinit}
M(0) = \int_{\mathbb{R}} \mathrm{exp}(-2x^2) \mathrm{d}x = \sqrt{\frac{\pi}{2}}.
\end{equation}
The bottom row of Figure \ref{fig:blowup} shows the evolution of the solution $|\psi(x,t)|^2$ for different values of $\alpha$ and we take $N=300,~\nu=4,~\tau=0.001$ in our simulations. Again, the finite time blow-up phenomenon is observed for $\alpha=0.6,0.8$. Now, we turn our attention to the property of mass conservation. Using the change of variable $x =\tan(\theta/2)/2$, the mass error can be written as
\begin{align}
\mathrm{err}(t) = \left| \int_{\mathbb{R}} |\psi_N(x,t)|^2 \mathrm{d}x - M(0) \right|
= \bigg| \frac{1}{2\pi} \int_{-\pi}^{\pi} \bigg|\sum_{k=-N}^{N-1}\zeta_k(t)\mathrm{i}^ke^{\mathrm{i}{k}\theta} \bigg|^2 \mathrm{d}\theta -\sqrt{\frac{\pi}{2}} \bigg|,  \notag
\end{align}
and the integral in the last equation can be evaluated by the inverse FFT. In Figure \ref{fig:masserror} we plot the mass error of the MTF spectral method coupled with the Krogstad-P22 scheme for $\alpha=1.99$ and we choose $N=150,\nu=4,\tau=0.001$. We see that the mass errors are of the order $10^{-13}$ for $t\in[0,50]$, which indicates that our method is mass conserved. Moreover, compared with the mass error in \cite[Figure~12]{Cayama2020ANM}, which achieves the order $10^{-7}$ with the same number of spectral discretizations, our method is clearly much better.

\begin{figure}
\centering
\includegraphics[height=4.5cm,width=4.6cm]{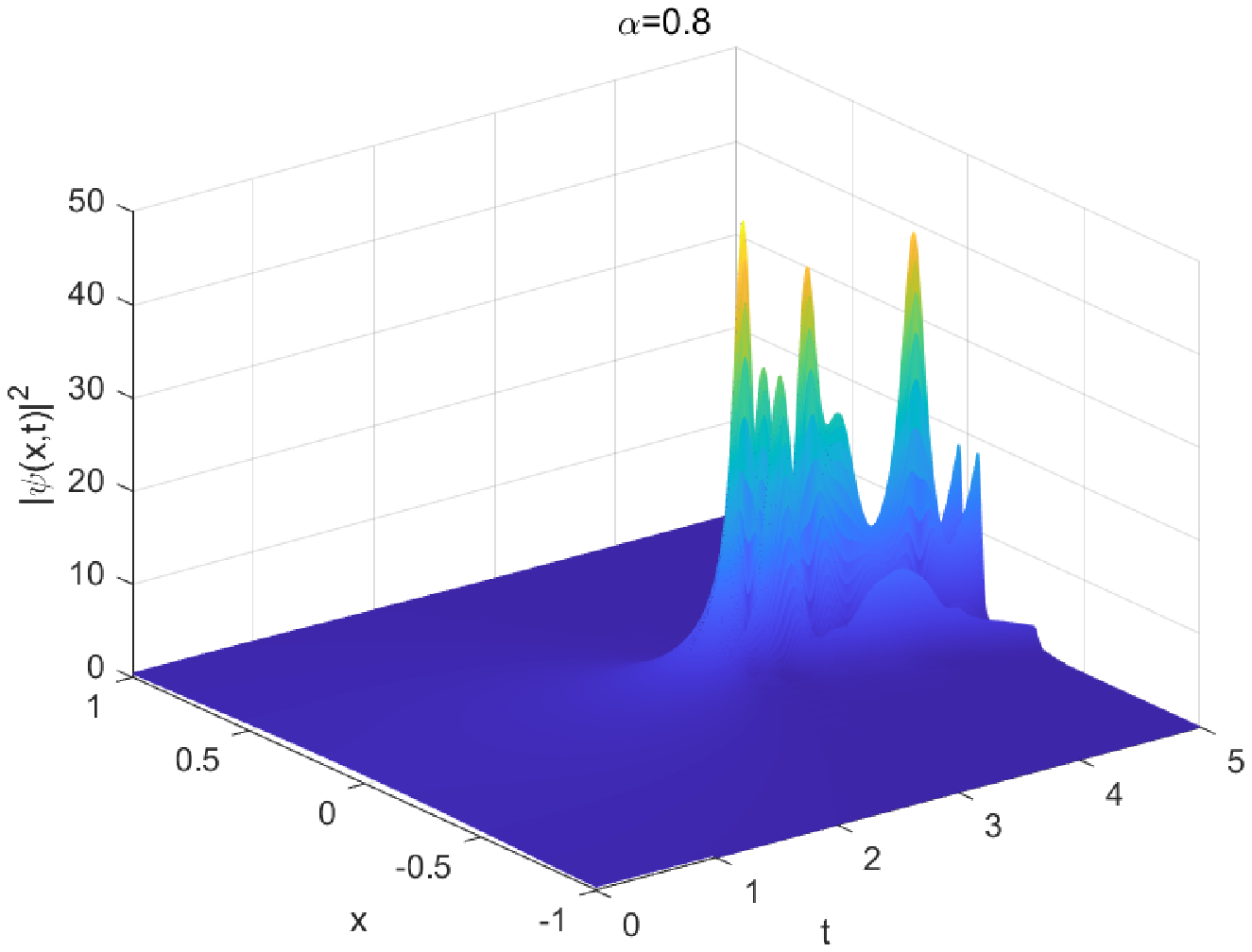}
\includegraphics[height=4.5cm,width=4.6cm]{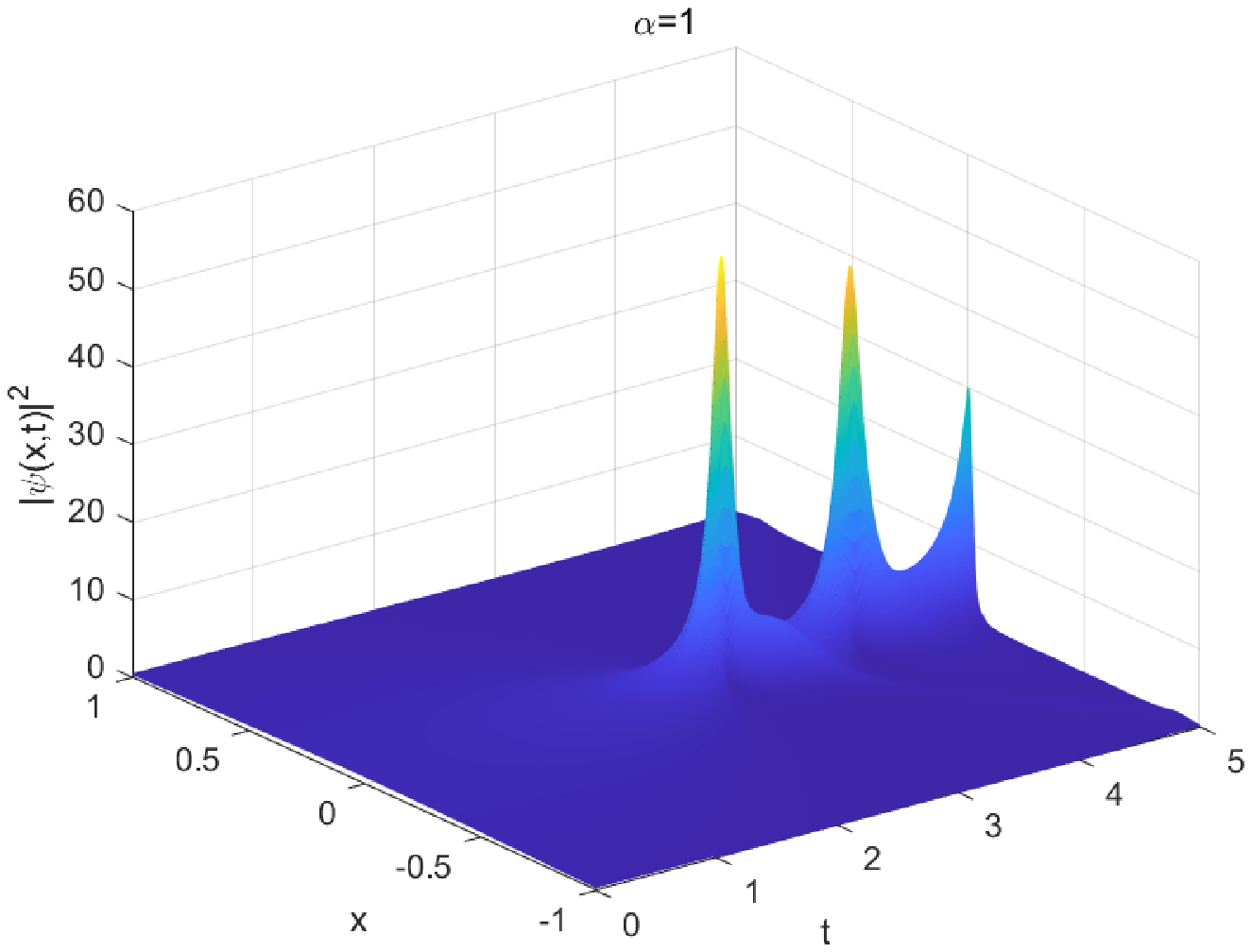}
\includegraphics[height=4.5cm,width=4.6cm]{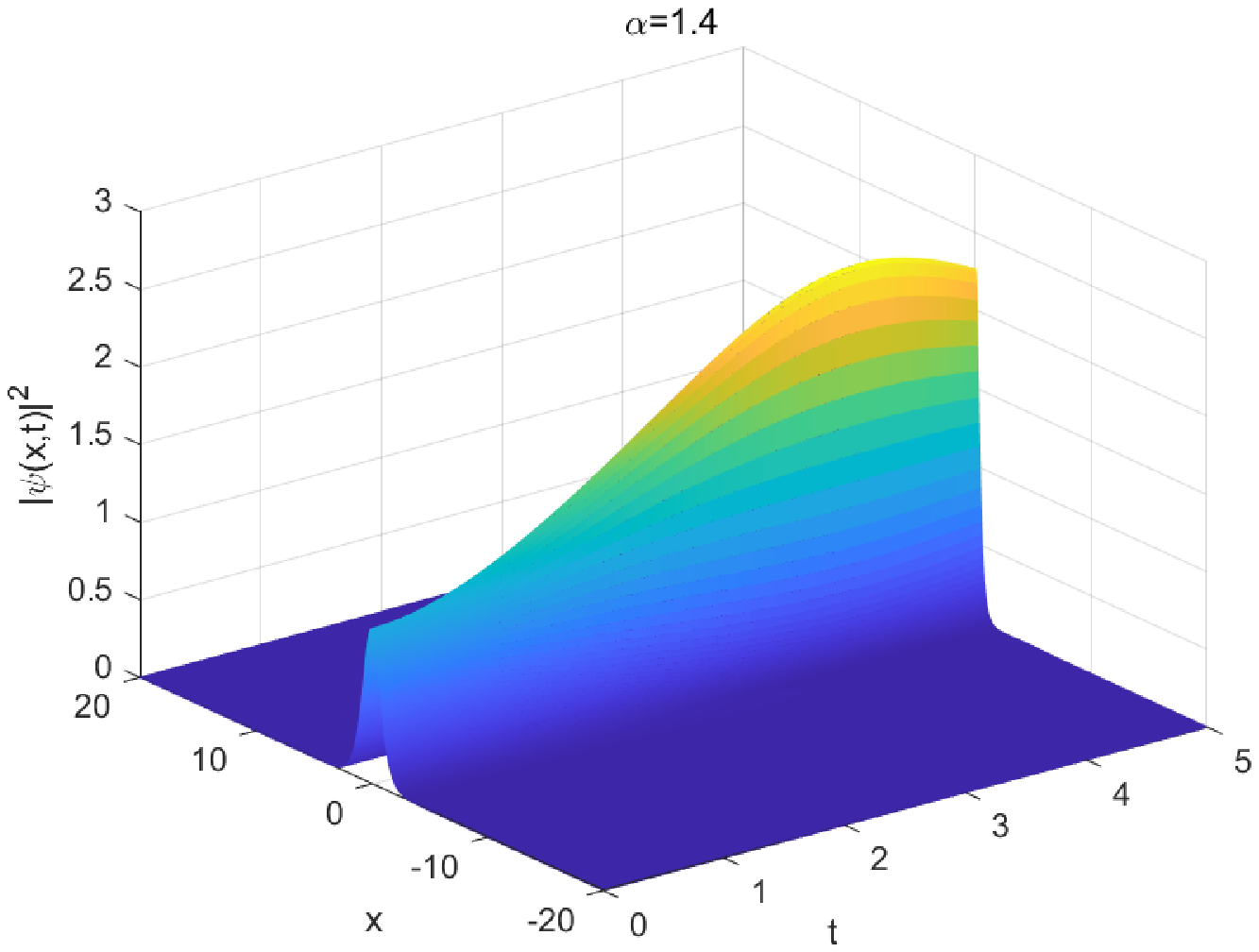}
\includegraphics[height=4.5cm,width=4.6cm]{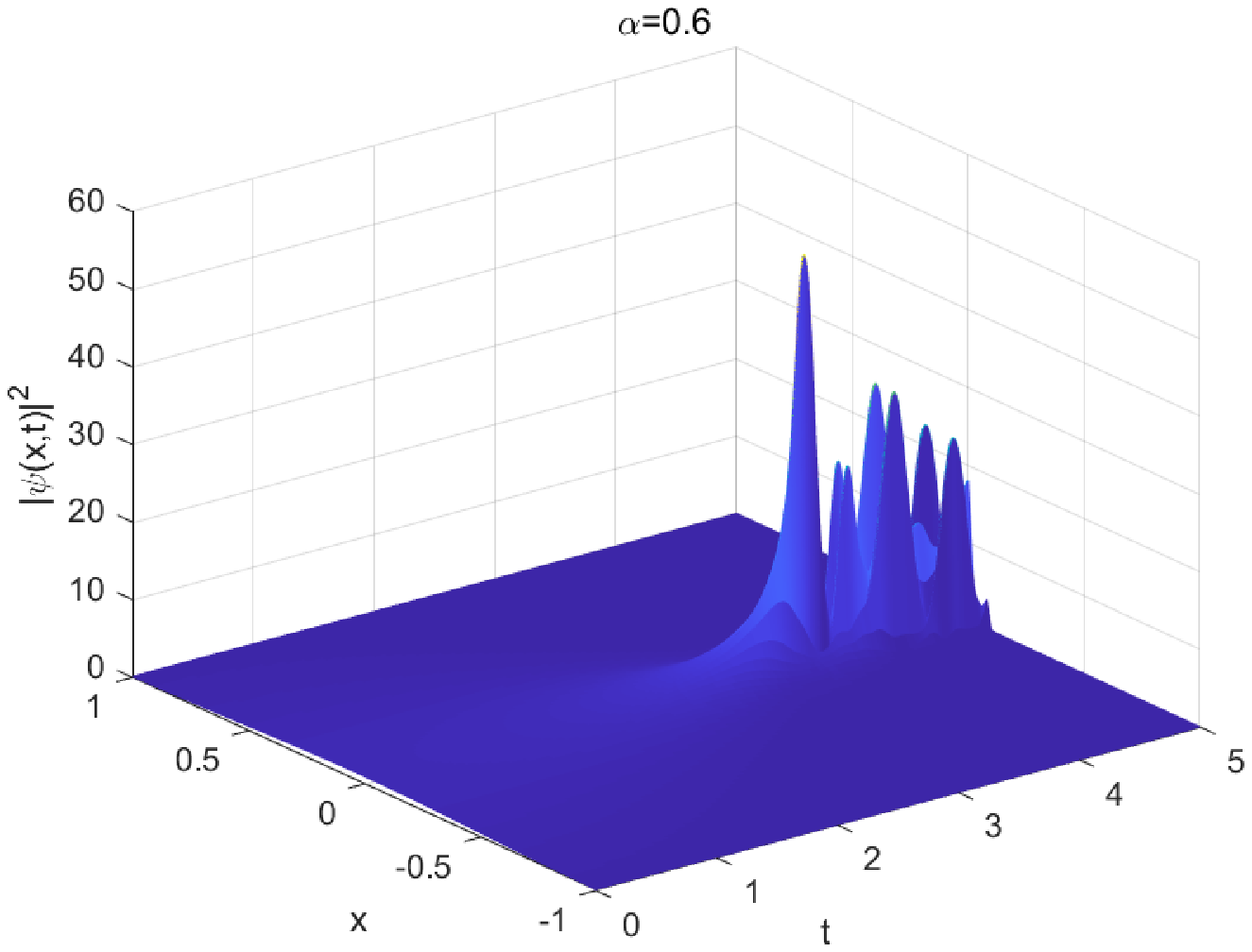}
\includegraphics[height=4.5cm,width=4.6cm]{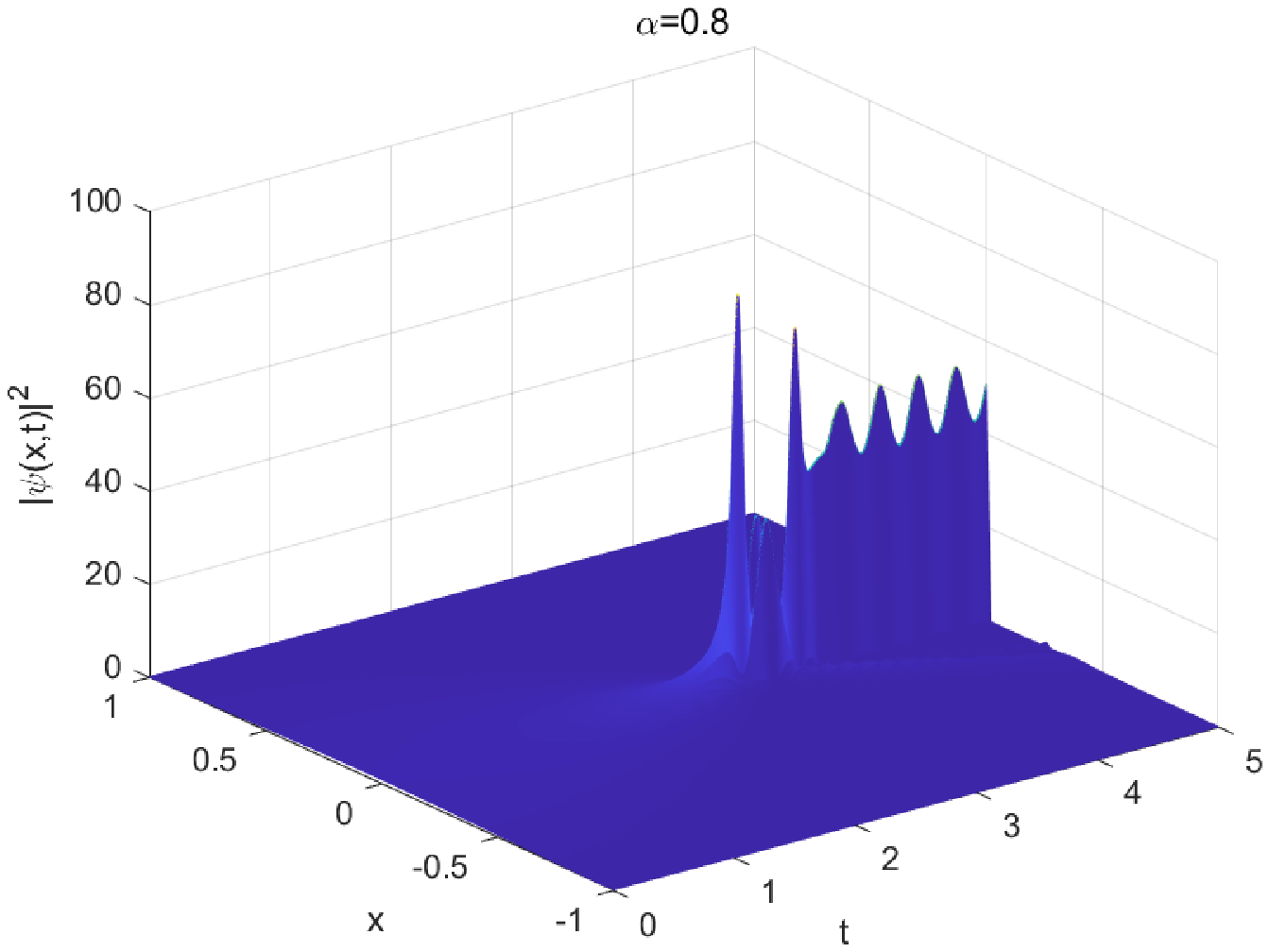}
\includegraphics[height=4.5cm,width=4.6cm]{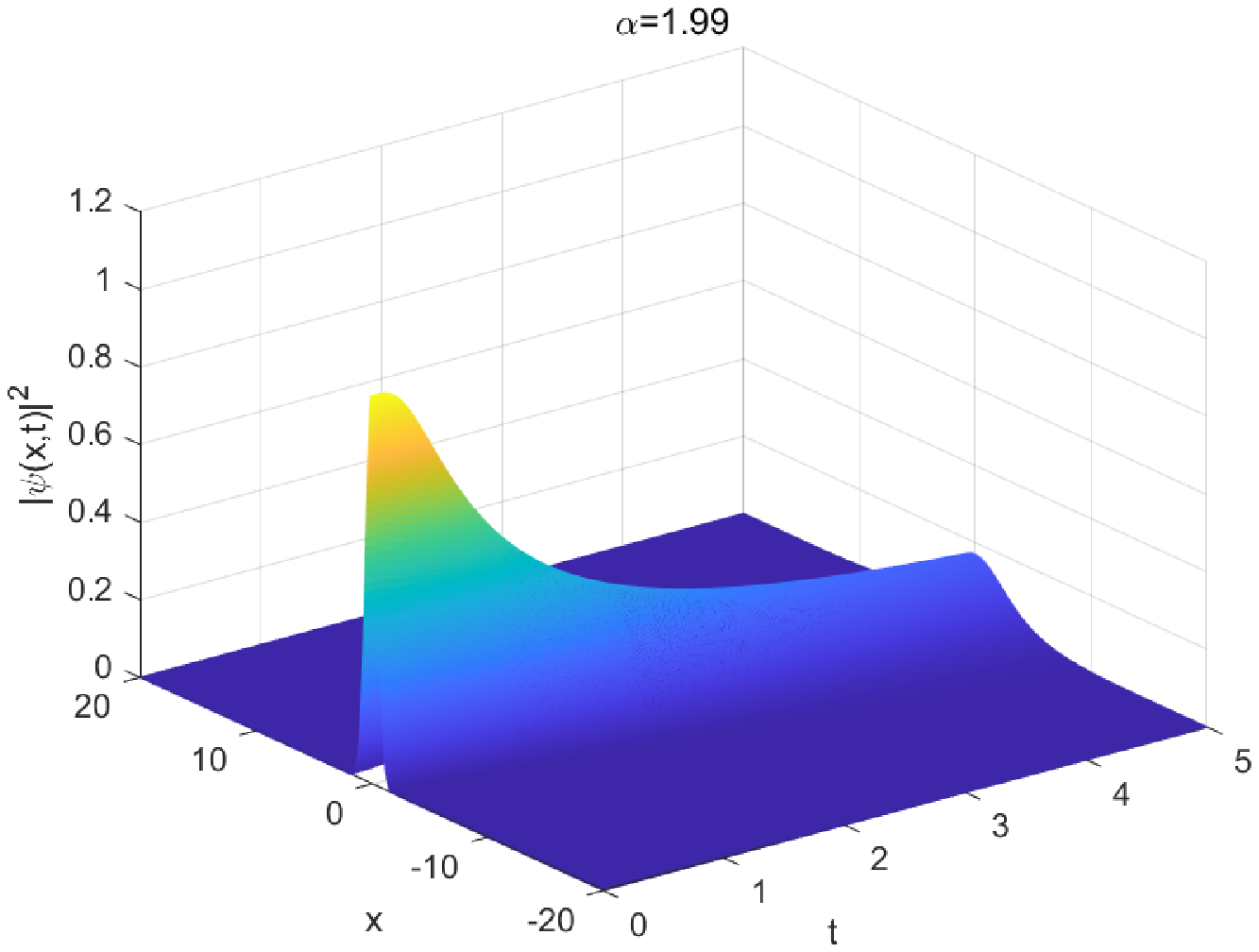}
\caption{Time evolution of $|\psi(x,t)|^2$ in \eqref{eq:nonlinearFSE} with the initial data $\psi_0(x)=\mathrm{sech}(x)$ (top row) and $\psi_0(x)=\mathrm{exp}(-x^2)$ (bottom row).  }
\label{fig:blowup}
\end{figure}

\begin{figure}
\centering
\includegraphics[height=6cm,width=8cm]{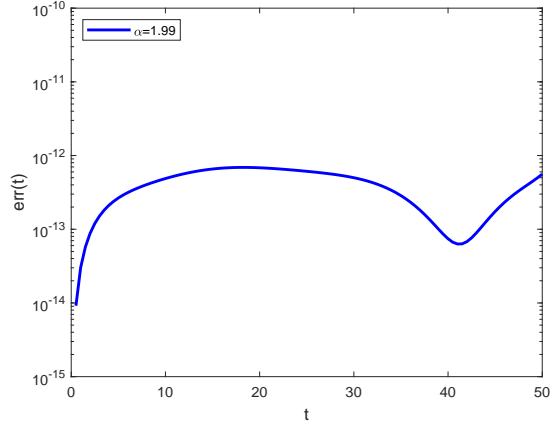}
\caption{Mass error of the MTF spectral Galerkin method coupled with the Krogstad-P22 scheme to \eqref{eq:nonlinearFSE} for $\alpha=1.99$. }
\label{fig:masserror}
\end{figure}

Finally, we illustrate the accuracy of the MTF spectral discretization in space and the temporal order of convergence of the Krogstad-P22 scheme \eqref{eq:ETDRKPade} in time. The reference solution is computed by using the MTF spectral Galerkin method coupled with \eqref{eq:ETDRKPade} with $N=300$ and $\tau=10^{-4}$. In Figure \ref{fig:spaceerror} we plot the maximum error of MTF spectral method with the scaling parameter $\nu=4$ coupled with the Krogstad-P22 scheme \eqref{eq:ETDRKPade} at time $t=1$. We can see that, similar to the linear case, the MTF spectral discretization converges at an exponential rate in the case $\alpha=1$ and at an algebraic rate in the case $\alpha\neq{1}$. In the left graph of Figure \ref{fig:NFSEtimeab}, we plot the maximum error of the MTF spectral method with $\nu=4$ coupled with \eqref{eq:ETDRKPade} at time $t=1$ as a function of the time step size $\tau$. We see that the temporal order of convergence is four for all choices of $\alpha$. In the right graph of Figure \ref{fig:NFSEtimeab}, we plot the asymptotic behavior of the computed solutions at time $t=1$ for three values of $\alpha$. We see that all computed solutions decay at the rate $\mathcal{O}(|x|^{-\alpha-1})$, which are in agreement with the numerical observation in \cite{Tang2020}.

\begin{figure}
\centering
\includegraphics[height=5cm,width=7cm]{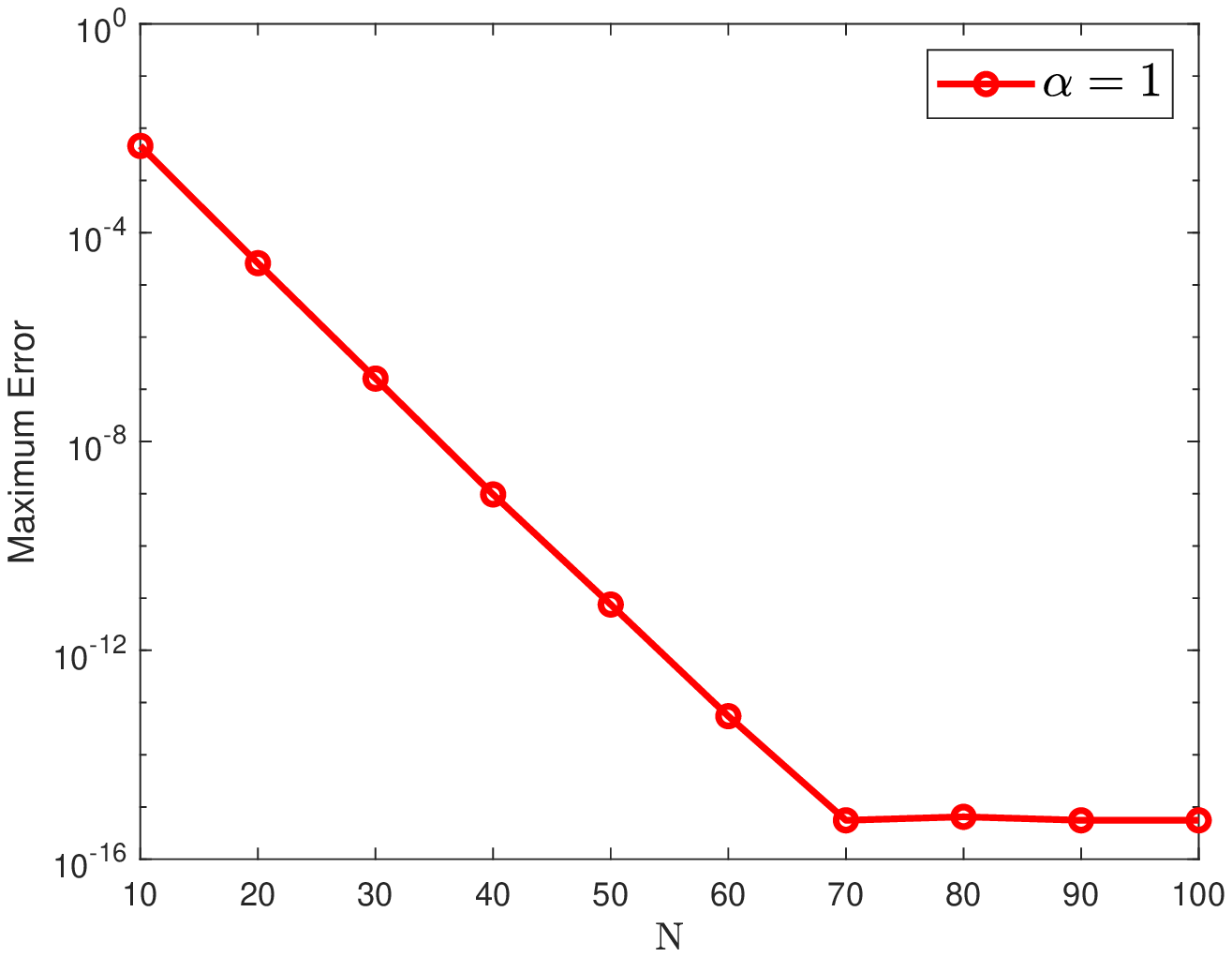}
\includegraphics[height=5cm,width=7cm]{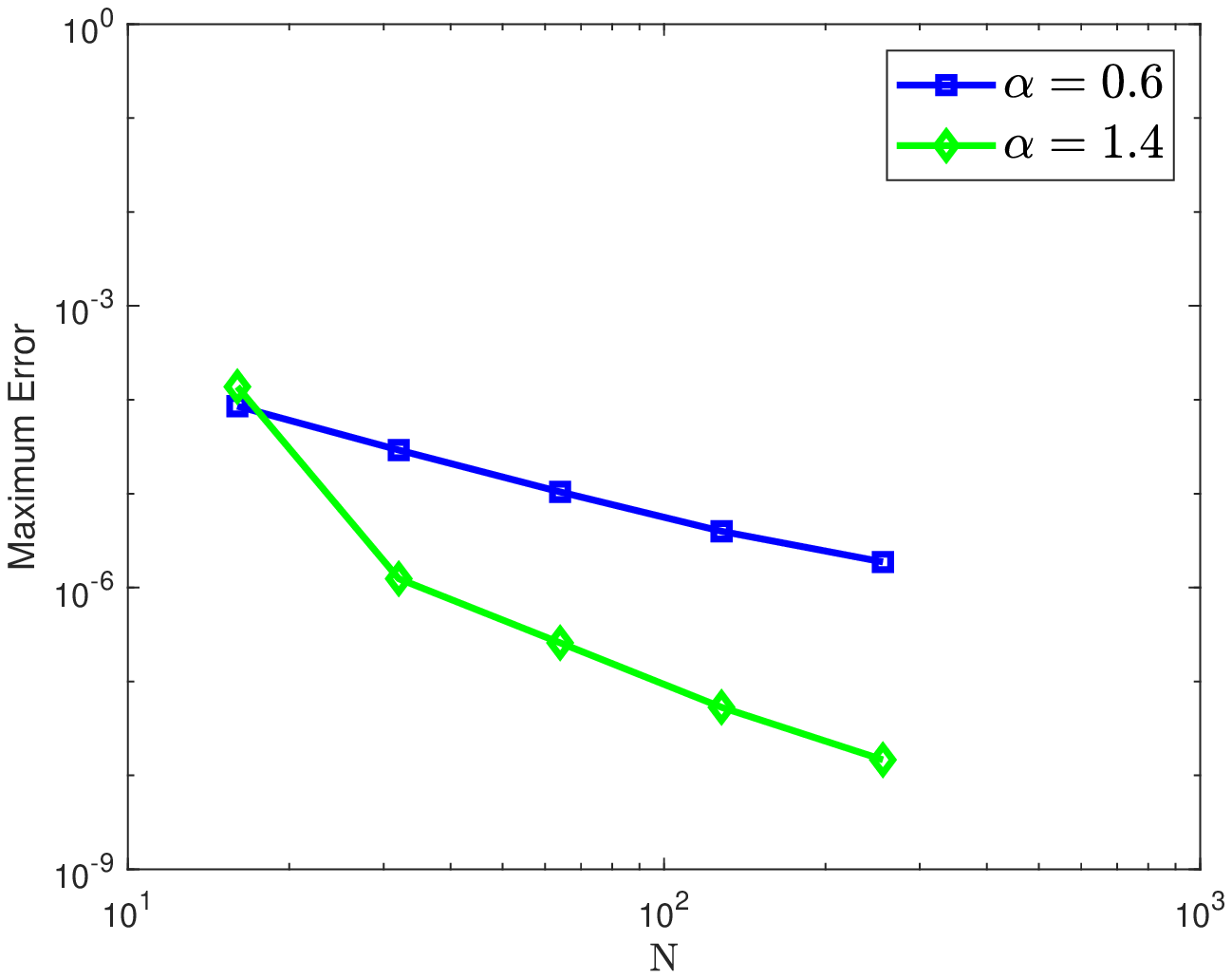}
\caption{Maximum errors of the MTF spectral method coupled with the Krogstad-P22 scheme at time $t=1$ as a function of $N$. }
\label{fig:spaceerror}
\end{figure}

\begin{figure}
\centering
\includegraphics[height=5cm,width=7.0cm]{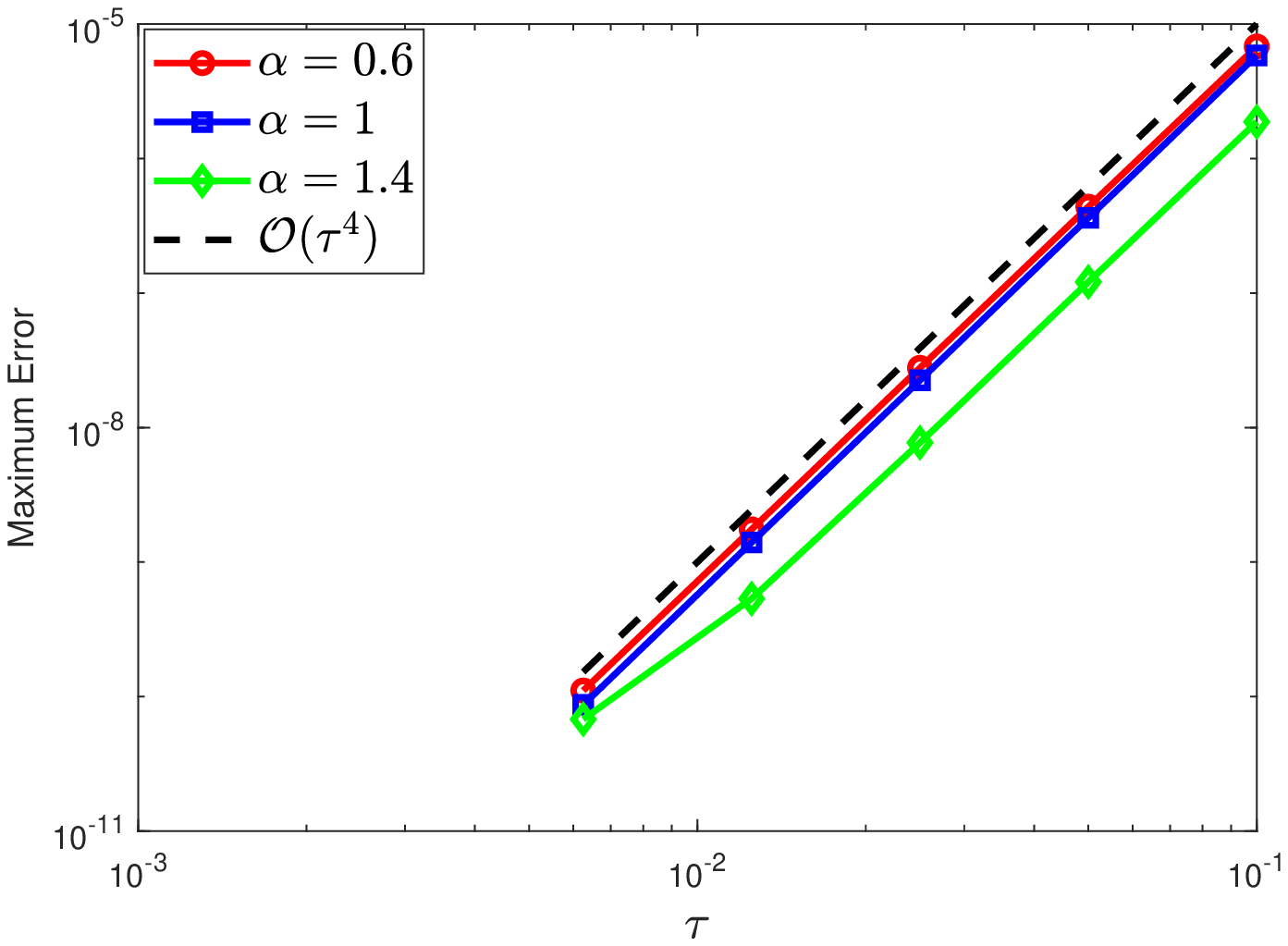}
\includegraphics[height=5cm,width=7.0cm]{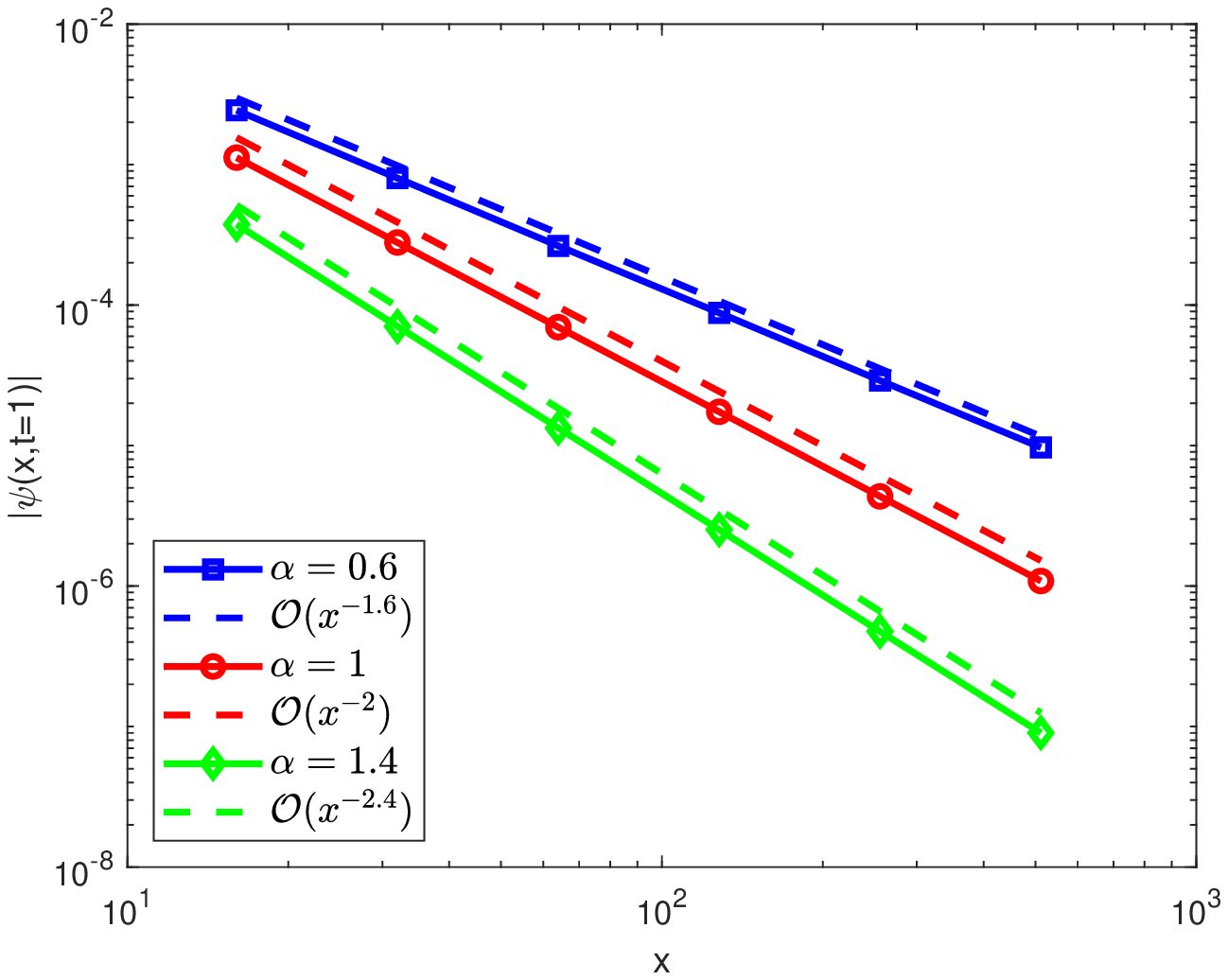}
\caption{Left: Maximum errors of MTF spectral method coupled with the Krogstad-P22 scheme \eqref{eq:ETDRKPade} at time $t=1$ as a function of $\tau$. Right: Asymptotic behavior of the solution $\psi(x,t=1)$ for three values of $\alpha$.}
\label{fig:NFSEtimeab}
\end{figure}

\section{Conclusions}\label{sect:Con}
In this paper, we have studied the numerical solution of the FSE on the real line. We proposed a new spectral discretization using MTFs in space combined with some time-stepping methods for the discretization in time. This new spectral discretization can achieve exponential convergence in space in the case of $\alpha=1$, regardless of the underlying FSE is linear or nonlinear, and exhibits a comparable or even better performance than state-of-the-art spectral discretization schemes in other cases. We conclude that spectral methods using MTFs are competitive for solving PDEs whose solution has slow decay behavior at infinity.

Before closing this paper, we list several problems for future research:
\begin{itemize}
\item Our study in this work is restricted to the one-dimensional FSE problem. However, an extension of the current work to the high dimensional FSE problem is straightforward. An issue arised in the process of extension is a multivariate counterpart of the integral in \eqref{eq:AStepI}, which might be difficult to evaluate due to the singular and nonseparable factor $|\xi|^{\alpha}$.

\item Both MTFs and MCFs are orthogonal systems on the real line and spectral approximations using them can be achieved rapidly by using the FFT. An interesting problem is to compare their approximation powers for functions with exponential or algebraic decay behavior at infinity.

\item Conservation laws, such as mass and energy, are important in quantum mechanics. In the case of the nonlinear FSE, Figure \ref{fig:masserror} implies that the MTF spectral Galerkin method coupled with the Krogstad-P22 scheme is mass conserved. However, a rigorous analysis of this observation is still lacking.
\end{itemize}
We will address these issues in the future.

\section*{Acknowledgements}
This work was supported by the National Natural Science Foundation of China under
grant 11671160. The authors are grateful to Prof. Chengming Huang for his valuable suggestions in improving this paper.

\end{document}